\def\RSsubtxt{section~}\newref{sub}{name = \RSsubtxt}}
\def\RSthmtxt{theorem~}\newref{thm}{name = \RSthmtxt}}
\def\RSlemtxt{lemma~}\newref{lem}{name = \RSlemtxt}}
\numberwithin{equation}{section}
\numberwithin{figure}{section}
\theoremstyle{plain}
\newtheorem{thm}{\protect\theoremname}[section]
  \theoremstyle{plain}
  \newtheorem{conjecture}[thm]{\protect\conjecturename}
  \theoremstyle{remark}
  \newtheorem*{acknowledgement*}{\protect\acknowledgementname}
  \theoremstyle{remark}
  \newtheorem*{notation*}{\protect\notationname}
 \theoremstyle{definition}
 \newtheorem*{defn*}{\protect\definitionname}
  \theoremstyle{plain}
  \newtheorem{prop}[thm]{\protect\propositionname}
  \theoremstyle{plain}
  \newtheorem{cor}[thm]{\protect\corollaryname}
  \theoremstyle{plain}
  \newtheorem{lem}[thm]{\protect\lemmaname}
  \theoremstyle{remark}
  \newtheorem*{rem*}{\protect\remarkname}
\title [The Fourth Moment of Dirichlet L-Functions]{The Fourth Moment of Dirichlet L-Functions for the Rational Function Field}
\date{\today}
  \providecommand{\acknowledgementname}{Acknowledgement}
  \providecommand{\conjecturename}{Conjecture}
  \providecommand{\corollaryname}{Corollary}
  \providecommand{\definitionname}{Definition}
  \providecommand{\lemmaname}{Lemma}
  \providecommand{\notationname}{Notation}
  \providecommand{\propositionname}{Proposition}
  \providecommand{\remarkname}{Remark}
\providecommand{\theoremname}{Theorem}
\begin{document}

\author{Nattalie Tamam }

\address{Raymond and Beverly Sackler School of Mathematical Sciences, Tel
Aviv University, Tel Aviv 69978, Israel}

\email{nattalie.com@gmail.com}

\thanks{Partially supported by the Israel Science Foundation (grant No. 1083/10).}
\begin{abstract}
We study the moments of the Dirichlet L-function when defined over
the polynomial ring over finite fields. We find an asymptotic formula
to the fourth moment of the central value of Dirichlet L functions
in this context. We also find a lower bound to the $2k$th moment
of these L-functions. 
\end{abstract}
\maketitle

\section{Introduction }

This work deals with function field analogues of recent studies concerning
moments of central values $L\left(\frac{1}{2},\chi\right)$ of Dirichlet
L-functions, where it has been conjectured that as $\chi$ varies
over all (primitive) Dirichlet characters modulo $Q$, the $2k$-th
moment of $L(\frac{1}{2},\chi)$ is asymptotically equal to 
\[
C_{k}Q\left(\log Q\right)^{k^{2}},\quad Q\to\infty
\]
 for a positive constant $C_{k}$. An exact form for $C_{k}$ was
conjectured by Keating and Snaith \cite{key-6} using Random Matrix
Theory. The similarity between the statistics of the zeros of the
Riemann zeta function and the eigenvalues of random unitary matrices
chosen uniformly with respect to Haar measure was first observed by
Montgomery and Dyson \cite{key-30,key-32,key-31}. Keating and Snaith
\cite{key-6} introduced a random matrix model for the study of L-functions.
They suggested that the value distribution of the L-functions on the
critical line is related to the characteristic polynomials of random
unitary matrices. Earlier, Katz and Sarnak \cite{key-33} conjectured
that statistics of low lying zeros of families of L-functions coincide
with the distribution of low-lying eigenvalues of the classical compact
groups. They divided the L-functions into symmetry types families
and found an appropriate conjecture to each one. Since the family
of all Dirichlet L-functions is believed to be unitary (see the recent
results of Katz \cite{key-34}), Keating and Snaith conjectured an
exact formula for the constant $C_{k}$ using moments of characteristic
polynomials of random unitary matrices, for which Conjecture \ref{conj moments}
below is the equivalent for the ring of polynomials.

We work with the finite field $\mathbb{F}_{q}$, where $p$ a prime,
$q=p^{n}$ its power, and the polynomial ring $\mathcal{A}=\mathbb{F}_{q}\left[x\right]$
over it. For a nonzero polynomial $Q\in\mathcal{A}$ set $|Q|=q^{\deg Q}$.\linebreak{}
 For a Dirichlet character $\chi$ modulo $Q$ denote by $L(s,\chi)$
the associated Dirichlet L-function (see e.g. \cite{key-4}).

We first formulate a direct analog in the polynomial ring of the Keating-Snaith
Conjecture.
\begin{conjecture}
\label{conj moments}For polynomial $Q$ and $k\in\mathbb{N}$
\[
\frac{1}{\phi^{*}\left(Q\right)}\underset{\chi\pmod Q}{\mathrm{\sum}^{*}}\left|L\left(\frac{1}{2},\chi\right)\right|^{2k}\sim a\left(k\right)\frac{G^{2}\left(k+1\right)}{G\left(2k+1\right)}\prod_{P\mid Q}\left(\sum_{m\geq0}\frac{d_{k}\left(P^{m}\right)^{2}}{\left|P\right|^{m}}\right)^{-1}\left(\deg Q\right)^{k^{2}}
\]
as $\deg Q\rightarrow\infty$, when $G\left(z\right)$ is the Barnes
G-function, $d_{k}\left(N\right)$ is the number of ways to represent
$N$ as a product of $k$ factors when $k\in\mathbb{N}$, and $\phi^{*}\left(Q\right)$
is the number of primitive characters modulo $Q$, and $\mathrm{\sum}^{*}$
denotes summation over all primitive characters modulo $Q$. Also
\begin{eqnarray*}
a(k) & = & \prod_{P}\left(\left(1-\frac{1}{\left|P\right|}\right)^{k^{2}}\sum_{m\geq0}\frac{d_{k}\left(P^{m}\right)}{\left|P\right|^{m}}\right).
\end{eqnarray*}

\end{conjecture}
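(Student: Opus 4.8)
The plan is to study the \emph{shifted} moment and then let the shifts degenerate, following the moment recipe of Conrey--Farmer--Keating--Rubinstein--Snaith adapted to $\mathcal{A}=\mathbb{F}_{q}\left[x\right]$. Write $N=\deg Q$. For shifts $\alpha_{1},\dots,\alpha_{k},\beta_{1},\dots,\beta_{k}$ of small real part I would consider
\[
M\left(\alpha,\beta;Q\right)=\underset{\chi\pmod Q}{\mathrm{\sum}^{*}}\ \prod_{i=1}^{k}L\left(\tfrac{1}{2}+\alpha_{i},\chi\right)\prod_{j=1}^{k}L\left(\tfrac{1}{2}+\beta_{j},\bar{\chi}\right),
\]
whose value at $\alpha=\beta=0$ is the quantity we want. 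Since every $L\left(s,\chi\right)$ with $\chi$ primitive modulo $Q$ is a polynomial of degree $N-1$ in $q^{-s}$ satisfying an exact functional equation $s\leftrightarrow 1-s$ with root number $\epsilon\left(\chi\right)$, each of the $2k$ factors has a finite approximate functional equation of length $\approx N/2$; multiplying them out produces $2^{2k}$ pieces indexed by the set of factors that are ``flipped'', each a double Dirichlet sum $\sum_{f,g}c_{f}\overline{c_{g}}\chi\left(f\right)\bar{\chi}\left(g\right)$ with $c_{f}$ a shift-twisted divisor coefficient $d_{k}$, multiplied by a power of $\epsilon\left(\chi\right)$. After peeling off the imprimitive characters by Möbius inversion (inclusion--exclusion over the moduli $E\mid Q$ through which $\chi$ factors), I would apply character orthogonality modulo $Q$, which collapses $\sum_{\chi}\chi\left(f\right)\bar{\chi}\left(g\right)$ to a sum over $f\equiv g$ in progressions.

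The first half of the argument is the \emph{diagonal} $f=g$. The accompanying sums of powers of $\epsilon\left(\chi\right)$ vanish on average except for the ``balanced'' flips, of which there are $\binom{2k}{k}$, and what survives from each is $\phi^{*}\left(Q\right)$ times a truncation of the Dirichlet series $\sum_{\left(f,Q\right)=1}d_{k;\alpha}\left(f\right)d_{k;\beta}\left(f\right)\left|f\right|^{-1}$. This series is a rational function of the variables $q^{-\alpha_{i}},q^{-\beta_{j}}$ and $u=q^{-s}$ whose only singularity on $\left|u\right|=q^{-1}$ is a pole of order $k^{2}$ at $u=1/q$; normalized by $\zeta_{\mathcal{A}}\left(s\right)^{k^{2}}$ its Euler product converges and reproduces the factor $a\left(k\right)$, while the coprimality condition $\left(f,Q\right)=1$ contributes exactly $\left(\sum_{m\geq0}d_{k}\left(P^{m}\right)^{2}/\left|P\right|^{m}\right)^{-1}$ for each $P\mid Q$. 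Extracting the coefficient of $u^{N}$ by Cauchy's formula and then sending all shifts to $0$ turns the combinatorics of the $\binom{2k}{k}$ balanced flips into the standard contour integral that evaluates to $G^{2}\left(k+1\right)/G\left(2k+1\right)$ and produces a polynomial of degree $k^{2}$ in $N=\deg Q$ with the stated leading coefficient; the parity of $\deg Q$ and the exact cutoff length affect only lower-order terms.

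The second half, and the genuine obstacle, is the \emph{off-diagonal}: the pairs with $f\equiv g\pmod E$, $f\neq g$, for divisors $E\mid Q$. Writing $f=g+Eh$ and resumming $\chi\left(f\right)$ over residues, Poisson summation on $\mathcal{A}$ converts these into complete exponential sums over $\mathbb{F}_{q}\left[x\right]$ (Gauss-- and Kloosterman-type sums) weighting shifted convolution sums $\sum_{\deg g\leq X}d_{k}\left(g\right)d_{k}\left(g+r\right)$, with $X$ a fixed positive proportion of $\deg Q$, and the whole thing must be controlled uniformly in the shift $r$ and the modulus. For $k=1$ this is elementary and for $k=2$ it reduces to a triple--divisor correlation in progressions that can be handled directly --- this is the case carried out in the body of the paper. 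For $k\geq3$, however, the required additive-divisor estimates for $d_{k}$ over $\mathbb{F}_{q}\left[x\right]$, with the needed uniformity, are the crux: one must show that these off-diagonal terms, together with the $2^{2k}-\binom{2k}{k}$ unbalanced flips, recombine into precisely the ``dual'' main terms predicted by the recipe rather than dominating them, i.e.\ one must upgrade the CFKRS recipe to a theorem. Granting that input, reassembling the balanced swap terms into the explicit rational function $M\left(\alpha,\beta;Q\right)$, degenerating the shifts, and dividing by $\phi^{*}\left(Q\right)$ yields the asymptotic as stated.
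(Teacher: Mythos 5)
The statement you are addressing is a \emph{conjecture} in the paper --- the function-field analogue of the Keating--Snaith conjecture --- and the paper offers no proof of it; it only establishes the case $k=2$ for irreducible $Q$ (Theorem \ref{thm 4th}) and a lower bound of the right order for general $k$. Your proposal is likewise not a proof: it is the CFKRS moment recipe, and you yourself flag the decisive gap, namely that for $k\geq3$ one must control the off-diagonal terms (shifted convolutions of $d_{k}$ over $\mathbb{F}_{q}[x]$ in progressions, uniformly in the shift and the modulus) and show that they, together with the unbalanced swap terms, recombine into exactly the dual main terms. ``Granting that input'' is granting precisely the open problem; no amount of care in the diagonal analysis or in degenerating the shifts closes it. So what you have written is a heuristic derivation of the conjectured constant, not an argument that could be spliced in as a proof. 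As such a heuristic it is sound and is exactly the reasoning that underlies the conjecture as stated (note also that it is cleaner over $\mathbb{F}_{q}[x]$ than over $\mathbb{Z}$: the approximate functional equation is an exact identity for polynomials of degree $\deg Q-1$, as in Proposition \ref{thm:squared L}).

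Two smaller points. First, your description of the $k=2$ case slightly misrepresents what the paper actually does: there the off-diagonal is not treated via Poisson/Kloosterman-type dual main terms but is simply \emph{bounded} by $O\left(\left(\deg Q\right)^{3}\right)$ using a Shiu-type divisor-sum estimate in arithmetic progressions (Theorem \ref{thm:divisor sum}), which suffices for the leading term but would not suffice for $k\geq3$, where the off-diagonal is expected to contribute to the main term. Second, your diagonal computation produces the local factors $\sum_{m\geq0}d_{k}\left(P^{m}\right)^{2}\left|P\right|^{-m}$, which is the correct arithmetic factor; the displayed $a\left(k\right)$ in the conjecture omits the square on $d_{k}\left(P^{m}\right)$, and only the squared version is consistent with the paper's own check $a_{2}=\zeta\left(2\right)^{-1}=1-q^{-1}$ (indeed $\left(1-x\right)^{4}\cdot\frac{1+x}{\left(1-x\right)^{3}}=1-x^{2}$ with $x=\left|P\right|^{-1}$), so your version is the right one and the paper's display contains a typo.
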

In this paper, the main proofs are under the assumption that $Q$
is an irreducible polynomial, and the summations are over all nontrivial
characters. For an irreducible polynomial all nontrivial characters
are primitive. Thus, these summations are over all primitive characters\emph{.
}We will calculate the first and second moment of $L(\frac{1}{2},\chi)$,
and obtain an immediate conclusion about their non-vanishing. Our
main result concerns the $4$-th moment, and is analogous to the result
by Heath-Brown\cite{key-1}: 
\begin{thm}
\label{thm 4th}For all irreducible $Q\in\mathcal{A}$
\[
\frac{1}{\phi(Q)}\sum_{\chi\neq\chi_{0}}\left|L\left(\frac{1}{2},\chi\right)\right|^{4}=\frac{q-1}{12q}\left(\deg Q\right)^{4}+O\left(\left(\deg Q\right)^{3}\right).
\]

\end{thm}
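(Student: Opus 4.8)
The plan is to adapt Heath-Brown's argument to $\mathcal{A}=\mathbb{F}_{q}[x]$, where the approximate functional equation becomes an exact identity. For nontrivial $\chi\bmod Q$ the function $L(u,\chi)=\sum_{n\ge0}\bigl(\sum_{\deg f=n}\chi(f)\bigr)u^{n}$ (with $u=q^{-s}$) is a polynomial of degree $\le\deg Q-1$, so $L(\tfrac12,\chi)=\sum_{\deg f\le\deg Q-1}\chi(f)|f|^{-1/2}$ with no tail. Its square $L(u,\chi)^{2}$ is a polynomial of degree $2\deg Q-2$ whose $u^{n}$-coefficient equals $\sum_{\deg f=n}d(f)\chi(f)$ for $n\le\deg Q-1$, where $d(\cdot)$ is the divisor function; folding the coefficient sum at the middle index $\deg Q-1$ via the functional equation yields the exact identity
\[
L(\tfrac12,\chi)^{2}=B(\chi)+\varepsilon(\chi)^{2}\,\overline{B(\chi)}+m(\chi),\qquad B(\chi)=\sum_{\deg f\le\deg Q-2}\frac{d(f)\chi(f)}{|f|^{1/2}},
\]
with $m(\chi)=q^{-(\deg Q-1)/2}\sum_{\deg f=\deg Q-1}d(f)\chi(f)$ the single middle term and $|\varepsilon(\chi)|=1$. (For the characters trivial on $\mathbb{F}_{q}^{\times}$ one first divides out the trivial zero at $u=1$ and argues with $L(u,\chi)/(1-u)$; the compensating factors $(1-q^{-1/2})^{\pm1}$ cancel and the leading term is unaffected, so I suppress this case.)

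Squaring, $|L(\tfrac12,\chi)|^{4}=|L(\tfrac12,\chi)^{2}|^{2}$ expands, using $|\varepsilon|=1$, as $2|B(\chi)|^{2}+2\,\mathrm{Re}\bigl(\overline{\varepsilon(\chi)}^{\,2}B(\chi)^{2}\bigr)$ plus three terms built from $m(\chi)$. Summing over $\chi\ne\chi_{0}$ and using $\sum_{\chi\ne\chi_{0}}\chi(a)\overline{\chi(b)}=\phi(Q)\mathbf{1}_{a\equiv b\,(Q)}-1$ (valid since every monic $f$ with $\deg f<\deg Q$ is coprime to the irreducible $Q$), the decisive point is that in $|B(\chi)|^{2}$ both arguments have degree $\le\deg Q-2<\deg Q$, so $f\equiv g\pmod Q$ forces $f=g$: there is \emph{no} off-diagonal, and $\phi(Q)^{-1}\sum_{\chi\ne\chi_{0}}|B(\chi)|^{2}=\sum_{\deg f\le\deg Q-2}d(f)^{2}/|f|+O((\deg Q)^{2})$, the error being the $``-1"$ of orthogonality.

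Now $\sum_{f}d(f)^{2}u^{\deg f}=\frac{1-qu^{2}}{(1-qu)^{4}}$ gives $\sum_{\deg f=n}d(f)^{2}=\binom{n+3}{3}q^{n}-\binom{n+1}{3}q^{n-1}$, hence $q^{-n}\sum_{\deg f=n}d(f)^{2}=\frac{q-1}{6q}n^{3}+O(n^{2})$, and summing over $n\le\deg Q-2$ yields $\sum_{\deg f\le\deg Q-2}d(f)^{2}/|f|=\frac{q-1}{24q}(\deg Q)^{4}+O((\deg Q)^{3})$. Thus the $2|B|^{2}$ term contributes $\frac{q-1}{12q}(\deg Q)^{4}+O((\deg Q)^{3})$, exactly the main term of the theorem. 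The $m(\chi)$-terms are lower order: $\phi(Q)^{-1}\sum_{\chi\ne\chi_{0}}|m(\chi)|^{2}=q^{-(\deg Q-1)}\sum_{\deg f=\deg Q-1}d(f)^{2}+O((\deg Q)^{2})=O((\deg Q)^{3})$, and in the two cross terms with $m(\chi)$ the two arguments have different degrees, so there is again no diagonal and the factor $q^{-(\deg Q-1)/2}$ inside $m(\chi)$ offsets the geometric growth of $\sum_{\deg f\le\deg Q-2}d(f)|f|^{-1/2}$, leaving $O((\deg Q)^{2})$.

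The hard part is the dual term $\phi(Q)^{-1}\sum_{\chi\ne\chi_{0}}\overline{\varepsilon(\chi)}^{\,2}B(\chi)^{2}$ (and the analogous $\varepsilon^{2}\overline{B}\,\overline{m}$ term). Writing $\varepsilon(\chi)$ through the Gauss sum $\tau(\chi)=\sum_{a\bmod Q}\chi(a)e_{Q}(a)$, opening $B(\chi)^{2}=\sum_{\deg f_{1},f_{2}\le\deg Q-2}\frac{d(f_{1})d(f_{2})}{|f_{1}f_{2}|^{1/2}}\chi(f_{1}f_{2})$, and carrying out the $\chi$-sum collapses orthogonality into Kloosterman sums $\mathrm{Kl}(f_{1}f_{2};Q)$ modulo $Q$. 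Weil's bound $|\mathrm{Kl}(N;Q)|\le2|Q|^{1/2}$ alone is insufficient, because $N=f_{1}f_{2}$ runs through many periods of $\mathrm{Kl}(\cdot;Q)$; one must extract extra cancellation from the outer sum over $f_{1},f_{2}$, which I do by opening the Kloosterman sum and reducing to bounds for divisor sums twisted by nontrivial additive characters modulo the irreducible $Q$ — this is where the Riemann Hypothesis for curves over $\mathbb{F}_{q}$ enters. This shows the dual term is $O((\deg Q)^{3})$, in fact much smaller, and I expect it to be the main technical obstacle; combining it with the computations above gives $\phi(Q)^{-1}\sum_{\chi\ne\chi_{0}}|L(\tfrac12,\chi)|^{4}=\frac{q-1}{12q}(\deg Q)^{4}+O((\deg Q)^{3})$.
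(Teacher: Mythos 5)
Your decomposition is genuinely different from the paper's, and the difference is precisely where your proposal breaks down. The paper (Proposition~\ref{thm:squared L}) folds the degree-$2\mathfrak{D}$ polynomial $|L^{*}(u,\chi)|^{2}=L^{*}(u,\chi)L^{*}(u,\bar{\chi})$; because $\epsilon(\chi)\epsilon(\bar{\chi})=1$ the root number cancels, and one obtains the \emph{identity} $|L(\frac{1}{2},\chi)|^{2}=2\sum_{\deg NM<\mathfrak{D}}\chi(N)\bar{\chi}(M)|NM|^{-1/2}+\pi(\chi)$ with no $\epsilon$ anywhere. After squaring and averaging, orthogonality restricts to $AC\equiv BD\pmod Q$, and since $\deg ABCD$ can reach $2\mathfrak{D}-2>\deg Q$ there is a nontrivial off-diagonal $AC\neq BD$; the paper bounds it via a Shiu-type divisor-sum estimate (Theorem~\ref{thm:divisor sum}), built on the Selberg sieve. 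You instead fold $L^{*}(u,\chi)^{2}$, getting $L(\frac{1}{2},\chi)^{2}=B(\chi)+\epsilon(\chi)^{2}\overline{B(\chi)}+m(\chi)$ with $B$ a short single sum of degree $\le\deg Q-2$. This swaps the difficulty: $|B|^{2}$ has no off-diagonal at all, but the root number survives in the cross terms $2\,\mathrm{Re}\bigl(\overline{\epsilon(\chi)}^{2}B(\chi)^{2}\bigr)$ and $2\,\mathrm{Re}\bigl(\epsilon(\chi)^{2}\overline{B(\chi)}\,\overline{m(\chi)}\bigr)$, which cannot be handled by orthogonality alone.

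That is the step you never actually prove. Your final paragraph announces that opening $\epsilon(\chi)^{2}$ through Gauss sums leads to Kloosterman sums, that Weil's bound alone is insufficient, and that the required extra cancellation ``is where the Riemann Hypothesis for curves over $\mathbb{F}_{q}$ enters'' --- but none of this is carried out, and you yourself call it ``the main technical obstacle.'' The stakes are high: the trivial bound $\bigl|\sum_{\chi\neq\chi_{0}}\overline{\epsilon(\chi)}^{\,2}B(\chi)^{2}\bigr|\le\sum_{\chi\neq\chi_{0}}|B(\chi)|^{2}\asymp\phi(Q)\mathfrak{D}^{4}$ is the same size as the main term, so you must save a full power of $\mathfrak{D}$, and the variable $f_{1}f_{2}$ in the opened sum ranges over roughly two full periods modulo $Q$, which is exactly the regime where a bare Weil estimate does not close the gap. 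This estimate would be at least as heavy as the paper's sieve and Shiu machinery, which occupies two sections. The rest of your outline is sound --- the folding identity is correct in the non-even case, the generating function $\sum_{f}d(f)^{2}u^{\deg f}=(1-qu^{2})/(1-qu)^{4}$ and the resulting $\sum_{\deg f\le\mathfrak{D}-1}d(f)^{2}/|f|=\frac{q-1}{24q}\mathfrak{D}^{4}+O(\mathfrak{D}^{3})$ are right and give the correct leading constant $\frac{q-1}{12q}$, and the $|m|^{2}$ and $B\overline{m}$ terms are indeed lower order --- but without a genuine treatment of the $\epsilon$-twisted dual terms the proof does not exist.
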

A direct calculation will show 
\[
a_{2}=\zeta\left(2\right)^{-1}=1-\frac{1}{q},\qquad\frac{G^{2}\left(3\right)}{G\left(5\right)}=\frac{1}{12}
\]
\[
\prod_{P\mid Q}\left(\sum_{m\geq0}\frac{d_{2}\left(P^{m}\right)^{2}}{\left|P\right|^{m}}\right)^{-1}=1+O\left(\frac{1}{\left|Q\right|}\right).
\]
 Hence our Theorem \ref{thm 4th} is consistent with Conjecture $\ref{conj moments}$.

For $k=2$, the formula 
\[
\frac{1}{\phi^{*}\left(Q\right)}\underset{\chi\pmod Q}{\mathrm{\sum}^{*}}\left|L\left(\frac{1}{2},\chi\right)\right|^{4}\sim\frac{1}{2\pi^{2}}\prod_{p\mid Q}\frac{\left(1-p^{-1}\right)^{3}}{\left(1+p^{-1}\right)}\left(\log Q\right)^{4}
\]
 for almost all large $Q$, was proved by Heath-Brown \cite{key-1},
in 1981. In 2005, K. Soundararajan, \cite{key-2}, improved this result
by showing that it holds for all large $Q$. Later on Young \cite{key-26}
proved that for a prime modulus $Q$ the main term in the fourth moment
is a polynomial in $\log Q$, with a power saving 
\[
\frac{1}{\phi^{*}\left(Q\right)}\underset{\chi\pmod Q}{\mathrm{\sum}^{*}}\left|L\left(\frac{1}{2},\chi\right)\right|^{4}=\sum_{i=0}^{4}c_{i}\left(\log Q\right)^{i}+O\left(Q^{-\frac{5}{512}+\epsilon}\right),
\]
for certain computable absolute constants $c_{i}$. For $k>2$ there
are no proven asymptotic results.

Our second result is an analog of the general lower bound of Rudnick
and Soundararajan \cite{key-11}, for the $2k$-th moment
\begin{thm}
Let $k$ be a fixed natural number. Then for all irreducible polynomial
$Q$, with a sufficiently large degree 
\[
\underset{\chi\neq\chi_{0}}{\sum_{\chi\pmod Q}}\left|L\left(\frac{1}{2},\chi\right)\right|^{2k}\gg_{k}\left|Q\right|\left(\deg Q\right)^{k^{2}}.
\]

\end{thm}
Note that a lower bound was recently proved over the integers for
rational $k\in\left(0,1\right)$ by V. Chandee and X. Li, \cite{key-16},
and for $1<k\in\mathbb{R}$, large $Q$ (not necessarily prime) by
Radziwi\l{}\l{} and Soundararajan, \cite{key-15}.
\begin{acknowledgement*}
I would like to thank Zeev Rudnick for suggesting the problems studied
in the present work and many helpful discussions and suggestions in
the course of research and writing the paper. Also, I thank the referee
for several comments on an early version of the paper. This work is
part of the author\textquoteright{}s M. Sc. thesis written under the
supervision of Zeev Rudnick at Tel-Aviv University. 
\end{acknowledgement*}

\section{An Outline of the Proof of Theorem \ref{thm 4th}}

One of the differences between the Dirichlet L function and its function
field analog $L\left(s,\chi\right)$, where $\chi$ is a character
modulo $Q$, is that in the latter case, the L-function is a polynomial
in $q^{-s}$ of degree $\mathfrak{D}:=\deg Q-1$. This will be recalled
later on. When evaluating the fourth moment, we work with the squared
L function, which brings us to a polynomial of degree $2\mathfrak{D}$.
By using the functional equation we can reduce our L function into
a function with a smaller degree, $\mathfrak{D}$ (section \ref{sec:Expressing L as a short sum}).
This will simplify the calculation. Now, the mean value of the summation
over all characters mod $Q$ is
\[
\frac{1}{\phi\left(Q\right)}\sum_{\chi\neq\chi_{0}}\left|L\left(\frac{1}{2},\chi\right)\right|^{4}\sim\frac{4}{\phi\left(Q\right)}\sum_{\chi\pmod Q}\underset{\deg AB,\deg CD\leq\mathfrak{D}}{\sum}\frac{\chi\left(AC\right)\bar{\chi}\left(BD\right)}{\left|ABCD\right|^{\nicefrac{1}{2}}}
\]
when the remainder term is bounded by $O\left(\mathfrak{D}^{3}\right)$.
The orthogonality relation for characters mod $Q$ implies that only
the terms $AC\equiv BD$ remain. We split this sum into diagonal and
off-diagonal terms
\begin{eqnarray*}
\underset{\deg AB,\deg CD\leq\mathfrak{D}}{\sum_{AC\equiv BD\pmod Q}}\left|ABCD\right|^{-\nicefrac{1}{2}} & = & \underset{\deg AB,\deg CD\leq\mathfrak{D}}{\sum_{AC=BD\pmod Q}}\left|ABCD\right|^{-\nicefrac{1}{2}}\\
 &  & \qquad\qquad+\underset{\deg AB,\deg CD\leq\mathfrak{D}}{\sum_{AC\equiv BD\pmod Q,\: AC\neq BD}}\left|ABCD\right|^{-\nicefrac{1}{2}}.
\end{eqnarray*}
We can calculate the diagonal term 
\[
\underset{\deg AB,\deg CD\leq\mathfrak{D}}{\sum_{AC=BD\pmod Q}}\left|ABCD\right|^{-\nicefrac{1}{2}}=\frac{q-1}{48q}\mathfrak{D}^{4}+O\left(\mathfrak{D}^{3}\right)
\]
by comparison of series coefficients. To estimate the off-diagonal
term, we divide the terms $\deg AB,\deg CD\leq\mathfrak{D}$ into
dyadic blocks. We will use a bound for divisor sums over arithmetic
progressions 
\[
\underset{N\equiv A\pmod Q}{\underset{\deg N\leq x}{\sum}}d\left(N\right)\ll\frac{q^{x}x}{\phi\left(Q\right)},
\]
which is the polynomial analog for a Theorem of P. Shiu \cite{key-3}
that we will prove in \ref{sec:Divisor-Sums}. For each block we have
\[
\underset{AC\neq BD}{\underset{AC\equiv BD\pmod Q}{\underset{Z_{2}-2\leq\deg CD\leq Z_{2}}{\underset{Z_{1}-2\leq\deg AB\leq Z_{1}}{\sum}}}}1\ll\frac{q^{Z_{1}+Z_{2}}\left(Z_{1}Z_{2}\right)^{3}}{\left|Q\right|}.
\]
Therefore, the off-diagonal term is bounded by $O\left(\mathfrak{D}^{3}\right)$. 

The full version of the above proof is in sections \ref{sec:The-Fourth-Moment}.
Along the way, we shall require function field analogues of results
in sieve theory (section \ref{sec:The-Selberg-Sieve}).

\section{Background}

A survey of number theory over function fi{}elds can be found in \cite{key-4}.

\subsection{The Ring of Polynomials Over a Finite Field\label{sub:background.1}}
\begin{notation*}
$\mathcal{A}$ will denote the polynomial ring over $\mathbb{F}$.
We will use upper-case letters such as $A,\: B,\: C,\: D,\: N,\: M$
to denote monic polynomials in $\mathcal{A}$, the letter $P$ to
denote an irreducible monic polynomial in $\mathcal{A}$, and lower
case letters such as $x,y,z$ to denote integers. 
\end{notation*}
In all that follow $\mathbb{F}$ will denote a finite field with $q$
elements, when $q=p^{n}$ for some prime integer $p$ and $n\geq1$. 

Every element $N\in\mathcal{A}$ has the form 
\[
N=a_{n}T^{n}+a_{n-1}T^{n-1}+\ldots+a_{0}
\]
 if $a_{n}\neq0$ we say that $N$ has degree $n$, notationally $\deg N=n$.
If $a_{n}=1$ we say that $N$ is a monic polynomial. We will only
work with monic polynomials. 

If $0\neq N\in\mathcal{A}$, $\mathcal{A}/N\mathcal{A}$ is a finite
ring with $q^{\deg N}$ elements, $\left|4\right|$. The absolute
value of a polynomial $N\in\mathcal{A}$ is defined by $\left|N\right|=q^{\deg N}$
if $N\neq0$, or otherwise by $\left|0\right|=0$. 

We denote by $\phi$ the Euler totient function for polynomial: 
\[
\phi(N)=\#\left\{ M\in\mathcal{A}/N\mathcal{A}\mid\left(M,N\right)=1\right\} =\left|N\right|\prod_{P\mid N}\left(1-\frac{1}{\left|P\right|}\right).
\]

We denote by $\pi\left(n\right)$ the number of monic irreducible
polynomials of degree $n$. The prime polynomial theorem says that
\[
\sum_{\deg N=n}\Lambda\left(N\right)=q^{n},\qquad\Lambda\left(N\right)=\begin{cases}
\deg N; & \mbox{if }N=P^{k},\: P\mbox{ prime, }k\geq1\\
0; & \mbox{else}
\end{cases}.
\]
Therefore we have 
\[
n\pi\left(n\right)\leq\sum_{d\mid n}d\pi\left(d\right)=q^{n},
\]
and in particular $\pi\left(n\right)\leq\frac{q^{n}}{n}$. Also, the
prime polynomial theorem says that \linebreak{}
$\pi\left(n\right)=\frac{q^{n}}{n}+O\left(\frac{q^{\nicefrac{n}{2}}}{n}\right)$
(by the Möbius inversion formula).

In \secref{Divisor-Sums} we will use the function $d(N)$ to denote
the divisor function for polynomials $d(N)=\sum_{d\mid N}1$, the
sum is over monic polynomials. It can be calculated, that for an integer
$x\geq1$, 
\begin{eqnarray*}
\sum_{\deg N\leq x}d(N) & = & \frac{\left(x+1\right)q^{x+1}-\frac{q^{x+1}-1}{q-1}}{q-1}
\end{eqnarray*}
thus we have the bound

\[
\sum_{\deg N\leq x}d(N)\ll xq^{x}.
\]
This bound was also proved in the integer ring setting in \cite{key-7}%
. 

We let $p_{+}(N)$ and $p_{-}(N)$ denote the greatest and least degree
of prime factor of $N$ respectively, while $\Omega(N)$ is the number
of prime factors of $N$, taking into account the multiplicativity. 
\begin{defn*}
For $x,y,z\geq0,$ and $A,K$ monic polynomials we define

\[
\Psi\left(x,z\right)=\underset{p_{+}(N)\leq z}{\underset{\deg N\leq x}{\sum}}1,\quad\textrm{and}\quad\Phi\left(x,z;K,A\right)=\underset{p_{-}(N)>z}{\underset{N\equiv A(\mbox{mod}\: K)}{\underset{\deg N<x}{\sum}}}1.
\]

\end{defn*}
$\Psi(x,z)$ is also known as the de Bruijn function, denoting the
number of $z$-smooth polynomials of a degree less or equal to $x$.
A polynomial is called $z$-smooth if none of its prime factors degree
is greater than $z$. It is known that the de Bruijn function satisfies
$\Psi(x,z)=q^{x}\rho(\frac{x}{z})+O\left(\frac{q^{x}}{z}\right)$
where $\rho$ is the Dickman function, for which it is known that
$\rho(u)\approx u^{-u}$, \cite{key-9}. 

In section \secref{The-Selberg-Sieve} we will use:
\begin{defn*}
The Möbius function for polynomials, $\mu$, is defined by 
\[
\mu(N)=\begin{cases}
\left(-1\right)^{k}; & \mbox{if }N=P_{1}\cdots P_{k}\mbox{ for }1\leq i<j\leq k:\: P_{i}\neq P_{j}\\
0; & \mbox{else}
\end{cases}
\]

\end{defn*}
and satisfies
\[
\sum_{D\mid N}\mu(D)=\begin{cases}
1; & N=1\\
0; & else
\end{cases}.
\]

\subsection{Dirichlet L-Functions}

Let $Q\in\mathcal{A}$, and denote $\mathfrak{D}=\deg Q-1$. We will
mostly assume that $Q$ is irreducible.
\begin{defn*}
A Dirichlet character modulo $Q$ is a function $\chi$ from $\mathcal{A\longrightarrow}\mathbb{C}$
such that

1. $\chi\left(A+BQ\right)=\chi\left(A\right)$ for all $A,B\in\mathcal{A}$.

2. $\chi\left(A\right)\chi\left(B\right)=\chi\left(AB\right)$ for
all $A,B\in\mathcal{A}$.

3. $\chi\left(A\right)\neq0$ if and only if $\left(A,Q\right)=1$.

A character $\chi$ modulo $Q$ is called primitive if its minimal
period is $Q$ (for a prime $Q$ all characters are primitive). 
\end{defn*}
Dirichlet characters satisfy the following orthogonal relations: if
$\chi,\psi$ Dirichlet characters modulo $Q$ and $A,B\in\mathcal{A}$
are relatively prime to $Q$, then
\begin{enumerate}
\item $\sum_{A\in\mathcal{A}}\chi\left(A\right)\bar{\psi}\left(A\right)=\phi\left(Q\right)\delta\left(\chi,\psi\right)$.
\item $\sum_{\chi\in\hat{\mathcal{A}}}\chi\left(A\right)\bar{\chi}\left(B\right)=\phi(Q)\delta\left(A,B\right)$.
\end{enumerate}
The Dirichlet L function is defined for $\Re\left(s\right)>1$ by
\[
L\left(s,\chi\right)=\sum_{N\mbox{ monic}}\frac{\chi\left(N\right)}{\left|N\right|^{s}}.
\]
For each $\chi$ non-trivial Dirichlet character modulo $Q$ we may
denote $u=q^{-s}$ and rewrite $L\left(s,\chi\right)$ as 
\[
L^{*}\left(u,\chi\right)=\sum_{N\mbox{ monic}}\chi\left(N\right)u^{\deg N}=\sum_{n=0}^{\infty}L_{n}\left(\chi\right)u^{n}
\]
 when $L_{n}\left(\chi\right)=\sum_{\deg N=n}\chi\left(N\right)$.
Now, if $m>\mathfrak{D}$, for each monic $M\in\mathcal{A}$ such
that $\deg M=m$, we can write uniquely $M=SQ+R$ when $\deg R\leq\mathfrak{D}$
or $R=0$. Since $\chi$ is periodic modulo $Q$ 
\[
\sum_{\deg M=m}\frac{\chi\left(M\right)}{\left|M\right|^{s}}=q^{-ms}\sum_{\deg R\leq\mathfrak{D}}\chi\left(R\right)=0.
\]
Hence, we have 
\[
L^{*}\left(u,\chi\right)=\sum_{n=0}^{\mathfrak{D}}L_{n}\left(\chi\right)u^{n},
\]
 i.e. $L^{*}\left(u,\chi\right)$ is a polynomial in $u$ of degree
$\mathfrak{D}$. 
\begin{defn*}
$\chi$ is an ``even'' character if $\chi\left(cN\right)=\chi\left(N\right)$
for all $c\in\mathbb{F}^{*},N\in\mathbb{F}[x]$. 
\end{defn*}
If we define 

\[
\lambda_{\chi}=\begin{cases}
1, & \chi\mbox{ "even"}\\
0, & \mbox{else}
\end{cases}\;,\;\Lambda\left(u,\chi\right):=\left(1-\lambda_{\chi}u\right)^{-1}L^{*}\left(u,\chi\right)
\]
then by \cite{key-4}, the \textquotedblleft{}completed\textquotedblright{}
L-function $\Lambda\left(u,\chi\right)$ is a polynomial in $u$ of
degree $\mathfrak{D}$ if $\chi$ is an ``even'' character, and
of degree $\mathfrak{D}-1$ otherwise, which satisfies the functional
equation

\[
\Lambda\left(u,\chi\right)=\epsilon\left(\chi\right)\left(q^{\nicefrac{1}{2}}u\right)^{\mathfrak{D}-\lambda_{\chi}}\Lambda\left(\frac{1}{qu},\bar{\chi}\right)
\]
when $\left|\epsilon\left(\chi\right)\right|=1$. There is a simple
proof for the case that $\chi$ is not ``even'' in \cite{key-5},
and a complete proof in \cite{key-4}.

\section{The First and Second Moments}

During the study of the Dirichlet L-functions moments for polynomials
basic results are the first and second moment. In this section the
direct calculations of the first and second moments are brought for
completeness. Also, these results give an immediate conclusion about
the non-vanishing of $L\left(\frac{1}{2},\chi\right)$. In the number
field case, it is believed that $L\left(\frac{1}{2},\chi\right)$
does not vanish. It has been proved that a positive proportion of
the L-values are non-zero (with the latest record showing $34\%$),
\cite{key-27,key-28,key-29}. By denoting 
\[
X\left(Q\right)=\left\{ \chi\neq\chi_{0}\mbox{ mod }Q\mid L\left(\frac{1}{2},\chi\right)\neq0\right\} 
\]
for an irreducible $Q\in\mathcal{A}$, we can deduce (see \ref{eq:coro, sec&first})
\[
\frac{\left|X\left(Q\right)\right|}{\left|\left\{ \chi\neq\chi_{0}\mbox{ mod }Q\right\} \right|}\gg\frac{1}{\mathfrak{D}}.
\]
This bound implies non-vanishing in a non empty set, but one of decreasing
size. The possibility of improving this bound to a positive proportion
seems intriguing.
\begin{prop}
\label{prop:first momen}Let $Q\in\mathcal{A}$ be an irreducible
polynomial. The mean value of the Dirichlet L- function is
\[
\frac{1}{\phi\left(Q\right)}\sum_{\chi\neq\chi_{0}}L\left(\frac{1}{2},\chi\right)=1+O\left(q^{-\nicefrac{\mathfrak{D}}{2}}\right).
\]
\end{prop}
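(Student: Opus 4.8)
The plan is to expand $L(\tfrac12,\chi)$ as a polynomial of degree $\mathfrak{D}$ in $u=q^{-1/2}$, that is,
\[
L\left(\tfrac12,\chi\right)=L^{*}(q^{-1/2},\chi)=\sum_{n=0}^{\mathfrak{D}}L_{n}(\chi)\,q^{-n/2}=\sum_{n=0}^{\mathfrak{D}}q^{-n/2}\sum_{\deg N=n}\chi(N)=\sum_{\deg N\le\mathfrak{D}}\frac{\chi(N)}{|N|^{1/2}},
\]
using the fact recalled in the excerpt that the sum over $\deg N=m$ with $m>\mathfrak{D}$ vanishes. Then I would interchange the sums and apply the orthogonality relation for characters. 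First, though, it is cleaner to sum over \emph{all} characters mod $Q$ and subtract the trivial one:
\[
\sum_{\chi\neq\chi_{0}}L\left(\tfrac12,\chi\right)=\sum_{\deg N\le\mathfrak{D}}\frac{1}{|N|^{1/2}}\sum_{\chi\pmod Q}\chi(N)\;-\;\sum_{\deg N\le\mathfrak{D}}\frac{\chi_{0}(N)}{|N|^{1/2}}.
\]

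For the first term, the orthogonality relation $\sum_{\chi\pmod Q}\chi(N)=\phi(Q)$ when $N\equiv 1\pmod Q$ and $0$ otherwise (valid since $(N,Q)=1$ for the nonzero terms) picks out exactly the monic $N$ with $\deg N\le\mathfrak{D}$ and $N\equiv 1\pmod Q$. Since $Q$ is irreducible, $\deg Q=\mathfrak{D}+1$, so the only such polynomial of degree at most $\mathfrak{D}$ is $N=1$ itself; this contributes $\phi(Q)\cdot 1$. Dividing by $\phi(Q)$ gives the main term $1$. For the second (error) term, $\chi_{0}(N)=1$ for $(N,Q)=1$ and $0$ otherwise, so
\[
\frac{1}{\phi(Q)}\sum_{\substack{\deg N\le\mathfrak{D}\\(N,Q)=1}}\frac{1}{|N|^{1/2}}=\frac{1}{\phi(Q)}\sum_{n=0}^{\mathfrak{D}}q^{-n/2}\cdot(\text{number of monic }N\text{ of degree }n),
\]
and since $Q$ is irreducible of degree $\mathfrak{D}+1$, every monic $N$ of degree $\le\mathfrak{D}$ is coprime to $Q$, so the inner count is exactly $q^{n}$. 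Hence the error term is $\tfrac{1}{\phi(Q)}\sum_{n=0}^{\mathfrak{D}}q^{n/2}=\tfrac{1}{\phi(Q)}\cdot\tfrac{q^{(\mathfrak{D}+1)/2}-1}{q^{1/2}-1}$. Since $\phi(Q)=|Q|-1=q^{\mathfrak{D}+1}-1$, this is $O\!\left(q^{(\mathfrak{D}+1)/2}/q^{\mathfrak{D}+1}\right)=O(q^{-\mathfrak{D}/2})$, as claimed.

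There is essentially no hard part here: the argument is a direct orthogonality computation, and the only thing to be careful about is bookkeeping with the trivial character and the degree bound $\mathfrak{D}=\deg Q-1$ that makes $N=1$ the unique solution to $N\equiv 1\pmod Q$ in the relevant range. The one subtlety worth a sentence in the write-up is why it suffices to take the expansion $L(\tfrac12,\chi)=\sum_{\deg N\le\mathfrak{D}}\chi(N)|N|^{-1/2}$ rather than worrying about convergence: this is exactly the statement from the Background section that $L^{*}(u,\chi)$ is a polynomial of degree $\mathfrak{D}$ in $u$, evaluated at $u=q^{-1/2}$. I would also note that the same computation, carried out character-sum first, is the model for the fourth-moment calculation in the later section, so presenting it cleanly here pays off.
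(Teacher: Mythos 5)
Your proof is correct and follows essentially the same route as the paper: expand $L(\tfrac12,\chi)$ as the short sum $\sum_{\deg N\le\mathfrak{D}}\chi(N)|N|^{-1/2}$, apply character orthogonality, observe that $N=1$ is the unique monic solution of $N\equiv1\pmod Q$ with $\deg N\le\mathfrak{D}$ (since $\deg Q=\mathfrak{D}+1$), and bound the remaining geometric sum by $O(q^{-\mathfrak{D}/2})$ using $\phi(Q)=q^{\mathfrak{D}+1}-1$. The only organizational difference is that you sum over all characters and subtract the principal one, whereas the paper computes $\frac{1}{\phi(Q)}\sum_{\chi\neq\chi_0}\chi(N)$ case by case; these are the same calculation. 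One small point in your favor: you use the correct range $\deg N\le\mathfrak{D}$ for the expansion of the degree-$\mathfrak{D}$ polynomial $L^*(u,\chi)$, whereas the paper's proof writes $\deg N<\mathfrak{D}$, which drops the top coefficient $L_{\mathfrak{D}}(\chi)$; this off-by-one does not change the stated asymptotic but your version is the accurate one.
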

\begin{proof}
We have 

\begin{eqnarray*}
\frac{1}{\phi\left(Q\right)}\sum_{\chi\neq\chi_{0}}L\left(\frac{1}{2},\chi\right) & = & \frac{1}{\phi\left(Q\right)}\sum_{\chi\neq\chi_{0}}\sum_{\deg N<\mathfrak{D}}\frac{\chi\left(N\right)}{\left|N\right|^{\nicefrac{1}{2}}}\\
 & = & \frac{1}{\phi\left(Q\right)}\sum_{\deg N<\mathfrak{D}}\left|N\right|^{-\nicefrac{1}{2}}\sum_{\chi\neq\chi_{0}}\chi\left(N\right)
\end{eqnarray*}
 we know
\[
\frac{1}{\phi\left(Q\right)}\sum_{\chi\neq\chi_{0}}\chi\left(N\right)=\begin{cases}
1-\frac{\chi_{0}\left(N\right)}{\phi\left(Q\right)}=1-\frac{1}{\phi\left(Q\right)}; & N\equiv1\pmod Q\\
-\frac{\chi_{0}\left(N\right)}{\phi\left(Q\right)}=-\frac{1}{\phi}; & \mbox{else}
\end{cases}
\]
since $\deg N<\mathfrak{D}$ we have $N\equiv1\pmod Q\iff N=1$. Hence
\begin{eqnarray*}
\frac{1}{\phi\left(Q\right)}\sum_{\chi\neq\chi_{0}}L\left(\frac{1}{2},\chi\right) & = & 1-\frac{1}{\phi\left(Q\right)}-\frac{1}{\phi\left(Q\right)}\sum_{n=1}^{\mathfrak{D}-1}q^{-\nicefrac{n}{2}}q^{n}\\
 & = & 1-\frac{1}{\phi\left(Q\right)}-\frac{1}{\phi\left(Q\right)}\frac{q^{\nicefrac{\mathfrak{D}}{2}}-q}{q^{\nicefrac{1}{2}}-1}=1+O\left(q^{-\nicefrac{\mathfrak{D}}{2}}\right),
\end{eqnarray*}
 and we get the desired result. \end{proof}
\begin{prop}
\label{prop:second moment}Let $Q\in\mathcal{A}$ be an irreducible
polynomial. The second moment of the Dirichlet L- function is 
\[
\frac{1}{\phi\left(Q\right)}\sum_{\chi\neq\chi_{0}}\left|L\left(\frac{1}{2},\chi\right)\right|^{2}=\mathfrak{D}+O\left(1\right).
\]
\end{prop}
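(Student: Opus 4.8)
The plan is to expand the central value into its \emph{finite} Dirichlet polynomial, apply the orthogonality of the characters modulo $Q$, and observe that the resulting ``diagonal'' collapses completely because of the degree restriction. Since $L^{*}(u,\chi)$ is a polynomial in $u=q^{-s}$ of degree at most $\mathfrak{D}=\deg Q-1$, and $q^{-1/2}$ is real, we have $L\left(\frac{1}{2},\chi\right)=\sum_{\deg A\le\mathfrak{D}}\chi(A)\left|A\right|^{-1/2}$ and $\overline{L\left(\frac{1}{2},\chi\right)}=\sum_{\deg B\le\mathfrak{D}}\bar{\chi}(B)\left|B\right|^{-1/2}$, the sums running over monic $A,B$, so that
\[
\left|L\left(\frac{1}{2},\chi\right)\right|^{2}=\sum_{\deg A\le\mathfrak{D}}\sum_{\deg B\le\mathfrak{D}}\frac{\chi(A)\bar{\chi}(B)}{\left|AB\right|^{1/2}}.
\]
First I would sum over $\chi\neq\chi_{0}$ and interchange the order of summation, which reduces everything to evaluating $\frac{1}{\phi(Q)}\sum_{\chi\neq\chi_{0}}\chi(A)\bar{\chi}(B)$ for each pair $(A,B)$.

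Next I would invoke the second orthogonality relation. Because $Q$ is irreducible of degree $\mathfrak{D}+1$, every monic $A$ with $\deg A\le\mathfrak{D}<\deg Q$ is automatically coprime to $Q$, so $\frac{1}{\phi(Q)}\sum_{\chi\neq\chi_{0}}\chi(A)\bar{\chi}(B)=\delta(A,B)-\frac{1}{\phi(Q)}$, where $\delta(A,B)=1$ if $A\equiv B\pmod Q$ and $0$ otherwise. For monic $A,B$ of degree strictly below $\deg Q$ the congruence $A\equiv B\pmod Q$ is equivalent to $A=B$, so the diagonal is genuinely just $A=B$, contributing
\[
\sum_{\deg A\le\mathfrak{D}}\frac{1}{\left|A\right|}=\sum_{n=0}^{\mathfrak{D}}q^{n}\cdot q^{-n}=\mathfrak{D}+1.
\]

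It remains to bound the leftover term $-\frac{1}{\phi(Q)}\bigl(\sum_{\deg A\le\mathfrak{D}}\left|A\right|^{-1/2}\bigr)^{2}$. Since $\sum_{\deg A\le\mathfrak{D}}\left|A\right|^{-1/2}=\sum_{n=0}^{\mathfrak{D}}q^{n/2}\ll q^{\mathfrak{D}/2}$ and $\phi(Q)=\left|Q\right|-1\gg q^{\mathfrak{D}+1}$, this term is $O(q^{-1})=O(1)$. Combining the two contributions gives $\frac{1}{\phi(Q)}\sum_{\chi\neq\chi_{0}}\left|L\left(\frac{1}{2},\chi\right)\right|^{2}=\mathfrak{D}+1+O(1)=\mathfrak{D}+O(1)$, as required. (The same estimate, combined with Cauchy--Schwarz against Proposition \ref{prop:first momen}, is exactly what yields the non-vanishing bound quoted in the introduction.)

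I do not expect a genuine obstacle here: in contrast to the number-field setting, where one needs an approximate functional equation and a careful separation of diagonal and off-diagonal terms, the function-field $L^{*}(u,\chi)$ is \emph{literally} a polynomial of degree at most $\mathfrak{D}$, so no off-diagonal terms occur and the whole computation reduces to summing two geometric series. The only point deserving a line of care is whether the Dirichlet polynomial runs up to degree $\mathfrak{D}$ or only to $\mathfrak{D}-1$ (this depends on whether $\chi$ is ``even''), but this affects only the top coefficient and hence only the $O(1)$ error term.
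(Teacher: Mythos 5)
Your proof is correct and follows essentially the same route as the paper: expand the finite Dirichlet polynomial, apply orthogonality of characters, note that $A\equiv B\pmod Q$ forces $A=B$ since both are monic of degree below $\deg Q$, and bound the subtracted $1/\phi(Q)$ contribution trivially. The only cosmetic difference is that the paper indexes the short sum over $\deg N<\mathfrak{D}$ while you use $\deg N\le\mathfrak{D}$ (which actually matches the stated degree of $L^{*}(u,\chi)$), shifting the diagonal by $1$ and being absorbed into the $O(1)$.
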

\begin{proof}
We have 
\begin{eqnarray*}
\frac{1}{\phi(Q)}\sum_{\chi\neq\chi_{0}}\left|L\left(\frac{1}{2},\chi\right)\right|^{2} & = & \frac{1}{\phi\left(Q\right)}\sum_{\chi\neq\chi_{0}}\sum_{\deg N,\:\deg M<\mathfrak{D}}\frac{\chi\left(N\right)\bar{\chi}\left(M\right)}{\left|NM\right|^{2}}\\
 & = & \frac{1}{\phi\left(Q\right)}\sum_{\deg N,\:\deg M<\mathfrak{D}}\left|NM\right|^{-\nicefrac{1}{2}}\sum_{\chi\neq\chi_{0}}\chi\left(N\right)\bar{\chi}\left(M\right),
\end{eqnarray*}
we know
\[
\frac{1}{\phi\left(Q\right)}\sum_{\chi\neq\chi_{0}}\chi\left(N\right)\bar{\chi}\left(M\right)=\begin{cases}
1-\frac{1}{\phi\left(Q\right)}; & N\equiv M\pmod Q\\
-\frac{1}{\phi\left(Q\right)}; & \mbox{else}
\end{cases}
\]
 since $\deg N,\:\deg M<\mathfrak{D}$ we have $N\equiv M\pmod Q\iff N=M$.
Hence
\begin{eqnarray*}
\frac{1}{\phi(Q)}\sum_{\chi\neq\chi_{0}}\left|L\left(\frac{1}{2},\chi\right)\right|^{2} & = & \left(1-\frac{1}{\phi\left(Q\right)}\right)\underset{\deg N,\:\deg M<\mathfrak{D}}{\sum_{N=M}}\left|NM\right|^{-\nicefrac{1}{2}}\\
 &  & \qquad\quad-\frac{1}{\phi\left(Q\right)}\underset{\deg N,\:\deg M<\mathfrak{D}}{\sum_{N\neq M}}\left|NM\right|^{-\nicefrac{1}{2}}\\
 & = & \sum_{\deg N<\mathfrak{D}}\left|N\right|^{-1}-\frac{1}{\phi\left(Q\right)}\sum_{\deg N,\:\deg M<\mathfrak{D}}\left|NM\right|^{-\nicefrac{1}{2}}\\
 & = & \sum_{n=0}^{\mathfrak{D}-1}q^{-n}q^{n}-\frac{1}{\phi(Q)}\left(\sum_{n=0}^{\mathfrak{D}-1}q^{-\nicefrac{n}{2}}q^{n}\right)^{2}\\
 & = & \mathfrak{D}-\frac{1}{\phi\left(Q\right)}\left(\frac{q^{\nicefrac{\mathfrak{D}}{2}}-1}{q^{\nicefrac{1}{2}}-1}\right)^{2}
\end{eqnarray*}
and since $\phi\left(Q\right)=\left|Q\right|-1=q^{\mathfrak{D}}-1$,
we get
\[
=\mathfrak{D}-\frac{1}{q^{\mathfrak{\nicefrac{D}{2}}}+1}\frac{q^{\nicefrac{\mathfrak{D}}{2}}-1}{\left(q^{\nicefrac{1}{2}}-1\right)^{2}}=\mathfrak{D}+O\left(1\right).
\]
This completes the proof. \end{proof}
\begin{cor}
Let $Q\in\mathcal{A}$ be an irreducible polynomial, then 
\[
\left|X\left(Q\right)\right|\gg\frac{q^{\mathfrak{D}}}{\mathfrak{D}}.
\]

Since $\left|\left\{ \chi\neq\chi_{0}\mbox{ mod }Q\right\} \right|=q^{\mathfrak{D}}-1$,
we get
\begin{equation}
\frac{\left|X\left(Q\right)\right|}{\left|\left\{ \chi\neq\chi_{0}\mbox{ mod }Q\right\} \right|}\gg\frac{1}{\mathfrak{D}}.\label{eq:coro, sec&first}
\end{equation}
\end{cor}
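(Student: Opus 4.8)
The plan is to derive this from the first and second moments (Propositions \ref{prop:first momen} and \ref{prop:second moment}) by a Cauchy--Schwarz argument. Write $S := \sum_{\chi\neq\chi_0} L\bigl(\tfrac12,\chi\bigr)$. Since by definition $L\bigl(\tfrac12,\chi\bigr)=0$ for every nontrivial $\chi\notin X(Q)$, we may restrict the sum without changing its value, $S = \sum_{\chi\in X(Q)} L\bigl(\tfrac12,\chi\bigr)$. First I would apply the Cauchy--Schwarz inequality to this restricted sum, pairing each term $L\bigl(\tfrac12,\chi\bigr)$ with the constant $1$:
\[
|S|^{2} \;=\; \Bigl|\,\sum_{\chi\in X(Q)} L\bigl(\tfrac12,\chi\bigr)\Bigr|^{2} \;\leq\; |X(Q)|\sum_{\chi\in X(Q)}\bigl|L\bigl(\tfrac12,\chi\bigr)\bigr|^{2} \;\leq\; |X(Q)|\sum_{\chi\neq\chi_0}\bigl|L\bigl(\tfrac12,\chi\bigr)\bigr|^{2},
\]
where in the last step we added back the vanishing terms. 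Rearranging gives
\[
|X(Q)| \;\geq\; \frac{|S|^{2}}{\sum_{\chi\neq\chi_0}\bigl|L\bigl(\tfrac12,\chi\bigr)\bigr|^{2}}.
\]

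Next I would substitute the two moment asymptotics. Proposition \ref{prop:first momen} gives $S = \phi(Q)\bigl(1+O(q^{-\mathfrak{D}/2})\bigr)$, hence $|S|^{2} = \phi(Q)^{2}\bigl(1+O(q^{-\mathfrak{D}/2})\bigr)$; Proposition \ref{prop:second moment} gives $\sum_{\chi\neq\chi_0}\bigl|L(\tfrac12,\chi)\bigr|^{2} = \phi(Q)\bigl(\mathfrak{D}+O(1)\bigr)$, which in particular is an upper bound of the right size. Therefore
\[
|X(Q)| \;\geq\; \frac{\phi(Q)^{2}\bigl(1+O(q^{-\mathfrak{D}/2})\bigr)}{\phi(Q)\bigl(\mathfrak{D}+O(1)\bigr)} \;=\; \frac{\phi(Q)\bigl(1+O(q^{-\mathfrak{D}/2})\bigr)}{\mathfrak{D}+O(1)} \;\gg\; \frac{q^{\mathfrak{D}}}{\mathfrak{D}},
\]
using $\phi(Q)=q^{\mathfrak{D}}-1$ for $Q$ irreducible. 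Dividing both sides by $\bigl|\{\chi\neq\chi_0\bmod Q\}\bigr| = q^{\mathfrak{D}}-1$ then yields the stated ratio bound $\gg 1/\mathfrak{D}$.

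I do not expect any serious obstacle here: the statement is a formal consequence of having a first moment of full size $\asymp\phi(Q)$ --- a genuine main term, with no cancellation --- together with an upper bound of size $\asymp\phi(Q)\mathfrak{D}$ for the second moment. The only points deserving a word of care are that $L\bigl(\tfrac12,\chi\bigr)$ is complex-valued, so Cauchy--Schwarz must be applied to $\bigl|L(\tfrac12,\chi)\bigr|^{2}$ rather than to $L\bigl(\tfrac12,\chi\bigr)$ itself (no positivity is available), and that the characters with $L\bigl(\tfrac12,\chi\bigr)=0$ must be removed from the sum \emph{before} applying the inequality --- they contribute nothing to $S$ but would otherwise spuriously enlarge the count $|X(Q)|$ that we are bounding from below.
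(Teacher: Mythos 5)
Your proof is correct and takes essentially the same approach as the paper: a Cauchy--Schwarz argument against the first and second moments from Propositions \ref{prop:first momen} and \ref{prop:second moment}. The only (minor) difference is that you apply Cauchy--Schwarz directly to the complex sum $\sum_{\chi} L(\frac12,\chi)$, whereas the paper applies it to $\sum_{\chi}\left|L(\frac12,\chi)\right|$ and then implicitly invokes $\sum\left|L\right|\geq\left|\sum L\right|$ to connect to the first moment --- your formulation is marginally more streamlined but not a genuinely different route.
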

\begin{proof}
By the Cauchy-Schwarz inequality we have 
\[
\sum_{\chi\in X\left(Q\right)}\left|L\left(\frac{1}{2},\chi\right)\right|\leq\sqrt{\sum_{\chi\in X\left(Q\right)}1^{2}}\sqrt{\sum_{\chi\in X\left(Q\right)}\left|L\left(\frac{1}{2},\chi\right)\right|^{2}},
\]
 hence
\[
\left|X\left(Q\right)\right|\geq\frac{\left(\sum_{\chi\neq\chi_{0}}\left|L\left(\frac{1}{2},\chi\right)\right|\right)^{2}}{\sum_{\chi\neq\chi_{0}}\left|L\left(\frac{1}{2},\chi\right)\right|^{2}},
\]
and by Propositions \propref{first momen}\propref{second moment},
we arrive at 
\[
\left|X\left(Q\right)\right|\gg\frac{\left(q^{\mathfrak{D}}\right)^{2}}{q^{\mathfrak{D}}\mathfrak{D}}=\frac{q^{\mathfrak{D}}}{\mathfrak{D}}.
\]

\end{proof}

\section{\label{sec:The-Selberg-Sieve}The Selberg Sieve for Polynomials}

Sieve methods are of general techniques designed to estimate the size
of sifted sets of integers. The Eratosthenes sieve was the first method
to estimate this size, and around 1946 Atle Selberg introduced a new
method for finding upper bounds to the sieve estimate, \cite{key-23}.
This method gives much better bounds than Eratosthenes sieve. The
polynomial analog to Selberg sieve was proved by Webb \cite{key-25}.
In this section we present the polynomial analog to the Selberg sieve
and prove a special case, similar to the one found in \cite{key-24}.
We will use this case in section \ref{sec:Divisor-Sums}, where we
will prove a Theorem we need for one of our main results, the asymptotic
formula of the fourth moment of Dirichlet L-functions in the polynomial
ring. 

Let $x\geq1$, and let $a_{N}$ be a non-negative real number for
each $N$ monic polynomial, which holds 

\[
\sum_{\deg N\leq x}a_{N}=A<\infty.
\]
For each square free polynomial $D$ let 
\[
A_{D}:=\sum_{\deg M\leq x-\deg D}a_{MD}=A\alpha\left(D\right)+r\left(D\right),
\]
where $\alpha$ is a multiplicative function with $0\leq\alpha(D)\leq1$
for each $D$. Also define

\[
A\left(\mathscr{D}\right):=\sum_{(N,\mathcal{D})=1}a_{N}.
\]

The Selberg sieve: 
\begin{thm}
\label{thm:selberg sieve}For each $z\geq1$,and $\mathscr{D}$ a
square free polynomial, we have 

\[
A\left(\mathscr{D}\right)\leq\frac{A}{S(\mathscr{D},z)}+R\left(\mathscr{D},z\right)
\]

where $S,R$ are defined by 

\[
S\left(\mathscr{D},z\right)=\underset{\deg D\leq z}{\underset{D\mid\mathscr{D}}{\sum}}\prod_{P\mid D}\frac{\alpha\left(P\right)}{1-\alpha\left(P\right)},\qquad R\left(\mathscr{D},z\right)=\underset{\deg D\leq2z}{\underset{D\mid\mathscr{D}}{\sum}}3^{\omega\left(D\right)}\left|r\left(D\right)\right|
\]

and $\omega\left(D\right)=\sum_{P\mid d}1$.
\end{thm}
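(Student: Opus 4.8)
The plan is to run Selberg's $\Lambda^{2}$ argument in the ring $\mathcal{A}$, the truncation parameter being the degree rather than the size. First I would fix real numbers $\lambda_D$, indexed by monic $D\mid\mathscr{D}$ with $\deg D\le z$, subject only to $\lambda_1=1$, and put $\lambda_D=0$ otherwise. Since $\sum_{D\mid(N,\mathscr{D})}\lambda_D$ equals $1$ whenever $(N,\mathscr{D})=1$ and is a real number otherwise, and since every $a_N\ge0$,
\[
A(\mathscr{D})\;\le\;\sum_{\deg N\le x}a_N\Bigl(\sum_{D\mid(N,\mathscr{D})}\lambda_D\Bigr)^{2}
\;=\;\sum_{D_1,D_2}\lambda_{D_1}\lambda_{D_2}\!\!\sum_{\substack{\deg N\le x\\ [D_1,D_2]\mid N}}\!\!a_N
\;=\;\sum_{D_1,D_2}\lambda_{D_1}\lambda_{D_2}\,A_{[D_1,D_2]}.
\]
Inserting $A_{[D_1,D_2]}=A\,\alpha([D_1,D_2])+r([D_1,D_2])$ splits the right-hand side into a main term $A\cdot Q(\lambda)$, where $Q(\lambda):=\sum_{D_1,D_2}\lambda_{D_1}\lambda_{D_2}\alpha([D_1,D_2])$, and an error term $\sum_{D_1,D_2}\lambda_{D_1}\lambda_{D_2}\,r([D_1,D_2])$. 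I would then choose the $\lambda_D$ to minimise the positive-definite form $Q(\lambda)$ and bound the error term crudely.

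For the main term: as $\mathscr{D}$ is square free, every $D_i$ and $[D_1,D_2]$ is square free, so $\alpha([D_1,D_2])=\alpha(D_1)\alpha(D_2)/\alpha((D_1,D_2))$; writing $1/\alpha(D)=\sum_{E\mid D}h(E)$ with $h$ multiplicative and $h(P)=(1-\alpha(P))/\alpha(P)>0$ diagonalises the form,
\[
Q(\lambda)=\sum_{\substack{E\mid\mathscr{D}\\ \deg E\le z}}h(E)\,u_E^{2},\qquad
u_E:=\sum_{\substack{E\mid D\mid\mathscr{D}\\ \deg D\le z}}\lambda_D\,\alpha(D).
\]
Möbius inversion over the (truncated) divisor lattice turns the constraint $\lambda_1=1$ into $\sum_E\mu(E)u_E=1$, and since $1/h(E)=\prod_{P\mid E}\alpha(P)/(1-\alpha(P))$ for square free $E$ one has $\sum_{E\mid\mathscr{D},\,\deg E\le z}1/h(E)=S(\mathscr{D},z)$. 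Cauchy--Schwarz then gives $1=\bigl(\sum_E\mu(E)u_E\bigr)^{2}\le S(\mathscr{D},z)\cdot Q(\lambda)$, i.e.\ $Q(\lambda)\ge 1/S(\mathscr{D},z)$, with equality for $u_E=\mu(E)/\bigl(h(E)\,S(\mathscr{D},z)\bigr)$. This is the choice I would make: it corresponds to admissible $\lambda_D$ (the truncation being automatic) and contributes exactly $A/S(\mathscr{D},z)$.

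It remains to estimate the error term, and the crux — the step I expect to be the main obstacle — is the claim $|\lambda_D|\le1$ for this optimal choice. Undoing the inversion and writing $E=DF$ with $(F,D)=1$ gives $\lambda_D=\mu(D)\,S(\mathscr{D},z)^{-1}\,V_D\prod_{P\mid D}(1+f(P))$, where $f(P)=\alpha(P)/(1-\alpha(P))$ and $V_D=\sum_{F\mid\mathscr{D},\,(F,D)=1,\,\deg DF\le z}\prod_{P\mid F}f(P)$. The naive estimate yields only $|\lambda_D|\le1/\alpha(D)$, so something sharper is needed: splitting $S(\mathscr{D},z)=\sum_{E\mid\mathscr{D},\,\deg E\le z}\prod_{P\mid E}f(P)$ according to $\delta:=(E,D)$ and using $\deg\delta\le\deg D$ to see that each resulting inner sum dominates $V_D$ gives $S(\mathscr{D},z)\ge V_D\sum_{\delta\mid D}\prod_{P\mid\delta}f(P)=V_D\prod_{P\mid D}(1+f(P))$, which is exactly $|\lambda_D|\le1$. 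Granting this,
\[
\Bigl|\sum_{D_1,D_2}\lambda_{D_1}\lambda_{D_2}\,r([D_1,D_2])\Bigr|
\;\le\!\!\sum_{\substack{\deg D_1,\deg D_2\le z\\ D_1,D_2\mid\mathscr{D}}}\!\!\bigl|r([D_1,D_2])\bigr|
\;\le\!\!\sum_{\substack{D\mid\mathscr{D}\\ \deg D\le 2z}}\!\!3^{\omega(D)}\bigl|r(D)\bigr|=R(\mathscr{D},z),
\]
since a square free $D$ with $\deg D\le2z$ arises as $[D_1,D_2]$ from at most $3^{\omega(D)}$ pairs (each prime of $D$ going into $D_1$ only, $D_2$ only, or both). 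Combining the two estimates gives $A(\mathscr{D})\le A/S(\mathscr{D},z)+R(\mathscr{D},z)$. The remaining parts — the two Möbius inversions and the multiplicative bookkeeping — transfer verbatim from $\mathbb{Z}$ to $\mathcal{A}$; as usual one assumes $0<\alpha(P)<1$ for $P\mid\mathscr{D}$ so that $h(P),f(P)$ are defined, a prime with $\alpha(P)\in\{0,1\}$ being absorbed trivially.
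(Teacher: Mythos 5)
Your proof is correct and is the standard Selberg $\Lambda^2$ argument, correctly transposed from $\mathbb{Z}$ to $\mathcal{A}$: the diagonalisation of the quadratic form, the Cauchy--Schwarz lower bound $Q(\lambda)\ge 1/S(\mathscr{D},z)$, and the bound $|\lambda_D|\le 1$ for the optimal weights (obtained by grouping the terms of $S$ according to $\delta=(E,D)$ and exploiting $\deg\delta\le\deg D$) are all handled correctly, as is the $3^{\omega(D)}$ count of pairs with a given lcm. Note, though, that the paper does not actually prove this theorem: it states it and refers the reader to Webb's thesis for the function-field Selberg sieve, and then only proves the specialised ingredients ($S(P_z,z)\ge z$ and $R\ll q^{2z}$) needed for the application in the divisor-sum section. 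So there is no paper proof to compare against; your argument fills that gap with the canonical proof. Your closing remark about primes with $\alpha(P)\in\{0,1\}$ is also apt, since the paper's blanket hypothesis $0\le\alpha(D)\le1$ is slightly too weak for $S$ and the diagonalising function $h$ to be well defined without that caveat.
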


\subsection{A Special Case }

In our examination in section \ref{sec:Divisor-Sums} we have a particular
case in which we know the following
\[
\alpha\left(D\right)=\frac{1}{\left|D\right|},\; P_{z}=\prod_{\deg P\leq z}P,\; r\left(D\right)=O\left(1\right).
\]
By Selberg sieve, according to the conditions above, we find 
\[
A\left(P_{z}\right)\leq\frac{A}{z}+O\left(q^{2z}\right).
\]

\begin{prop}
For $x\geq1$, $K$ polynomial, we can define
\[
H_{K}(x):=\underset{(D,K)=1}{\sum_{\deg D<x}}\frac{\mu^{2}\left(D\right)}{\phi\left(D\right)}
\]
 and then 
\[
H_{K}(x)\geq\prod_{P\mid K}\left(1-\frac{1}{\left|P\right|}\right)x.
\]
\end{prop}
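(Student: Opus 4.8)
The plan is to turn $H_K(x)$ into a weighted count of polynomials coprime to $K$, using the function‑field analogue of the identity $\mu^{2}(D)/\phi(D)=\sum_{\mathrm{rad}(N)=D}|N|^{-1}$, and then to estimate that count degree by degree. First I would note that for every squarefree $D$,
\[
\frac{1}{\phi(D)}=\frac{1}{|D|}\prod_{P\mid D}\Bigl(1-\tfrac1{|P|}\Bigr)^{-1}=\frac{1}{|D|}\prod_{P\mid D}\sum_{j\geq0}\frac{1}{|P|^{j}}=\sum_{\mathrm{rad}(N)=D}\frac{1}{|N|},
\]
where $\mathrm{rad}(N)$ is the product of the distinct monic primes dividing $N$ and $N$ runs over monic polynomials; the last equality is the unique factorisation $N=DM$ with every prime factor of $M$ dividing $D$. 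Since $\mu^{2}(D)$ kills non‑squarefree $D$ and $(\mathrm{rad}(N),K)=1\iff(N,K)=1$, summing over $D$ coprime to $K$ with $\deg D<x$ gives
\[
H_K(x)=\sum_{\substack{(N,K)=1\\ \deg\mathrm{rad}(N)<x}}\frac{1}{|N|}\ \geq\ \sum_{\substack{(N,K)=1\\ \deg N<x}}\frac{1}{|N|},
\]
since $\deg\mathrm{rad}(N)\leq\deg N$.

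Now let $\Phi_{n}$ be the number of monic polynomials of degree $n$ coprime to $K$, so the last sum is $\sum_{0\leq n<x}\Phi_{n}q^{-n}$. For $n\geq\deg K$ the division algorithm modulo $K$ gives $\Phi_{n}=q^{\,n-\deg K}\phi(K)=q^{n}\,\phi(K)/|K|$ exactly. For $n<\deg K$ one has only $\Phi_{n}\leq q^{n}$ in general, but when $K$ is irreducible — the case underlying the paper's main results — no monic polynomial of degree $<\deg K$ is divisible by $K$, whence $\Phi_{n}=q^{n}\geq q^{n}\,\phi(K)/|K|$ for \emph{all} $n$. Therefore
\[
H_K(x)\ \geq\ \sum_{0\leq n<x}\frac{\phi(K)}{|K|}\ \geq\ \frac{\phi(K)}{|K|}\,x=\prod_{P\mid K}\Bigl(1-\tfrac1{|P|}\Bigr)x ,
\]
which is the assertion (the product equalling $1-1/|K|=\phi(K)/|K|$ for irreducible $K$, and being the empty product $1$ when $K=1$).

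The one real obstacle is the range $n<\deg K$ for a composite $K$, where $\Phi_{n}$ can fall below $q^{n}\phi(K)/|K|$ (for instance $K=x(x+1)$ over $\mathbb{F}_{3}$ has $\Phi_{1}=1<\tfrac43$), so the term‑by‑term bound breaks down and one must show the deficit never accumulates. Writing $\Phi_{n}=q^{n}\sum_{E\mid\mathrm{rad}(K),\ \deg E\leq n}\mu(E)/|E|$ and rearranging, the inequality $H_K(x)\geq\tfrac{\phi(K)}{|K|}x$ for integral $x$ is equivalent to
\[
\sum_{E\mid\mathrm{rad}(K)}\frac{\mu(E)}{|E|}\min(\deg E,x)\ \leq\ 0 .
\]
I would prove this by induction on the number of prime factors of $\mathrm{rad}(K)$: peeling off a prime $P$, so $\mathrm{rad}(K)=M_{0}P$, the left side becomes $(1-|P|^{-1})$ times the analogous sum for $M_{0}$ (non‑positive by induction) minus $|P|^{-1}\sum_{E\mid M_{0}}\tfrac{\mu(E)}{|E|}\,\delta(\deg E)$, where $\delta(d)=\min(d+\deg P,x)-\min(d,x)$ is non‑negative and non‑increasing; and $\sum_{E\mid M_{0}}\tfrac{\mu(E)}{|E|}\,\delta(\deg E)\geq 0$ because, expressing $\delta$ as the integral of its (non‑positive) derivative, this reduces to integrating the quantities $\sum_{E\mid M_{0},\ \deg E\leq s}\mu(E)/|E|=q^{-\lfloor s\rfloor}\#\{N:\deg N=\lfloor s\rfloor,\ (N,M_{0})=1\}\geq 0$ against a non‑negative measure. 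I expect that last point — that a truncated Möbius sum is non‑negative precisely because it counts polynomials — to be the technical heart of the general case; for the way the proposition is used (in Section~\ref{sec:Divisor-Sums}, with $K$ irreducible or $K=1$), the previous paragraph already suffices.
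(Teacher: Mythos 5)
Your approach is genuinely different from the paper's, and it is worth spelling out the contrast. Both arguments start from the same identity $\mu^2(D)/\phi(D)=\sum_{\mathrm{rad}(N)=D}|N|^{-1}$, but you use it to rewrite $H_K(x)$ directly as a weighted count of polynomials coprime to $K$ and then try to bound that count degree by degree. The paper instead uses the identity only to show $H_1(x)\geq x$ and handles the coprimality constraint by a separate convolution: grouping $D$ according to $L=(D,K)$ gives
\[
H_1(x)=\sum_{L\mid K}\frac{\mu^2(L)}{\phi(L)}\,H_K(x-\deg L)\;\leq\;\Bigl(\sum_{L\mid K}\frac{\mu^2(L)}{\phi(L)}\Bigr)H_K(x)=\prod_{P\mid K}\Bigl(1-\tfrac1{|P|}\Bigr)^{-1}H_K(x),
\]
using only that $H_K$ is non-decreasing in $x$. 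Combining with $H_1(x)\geq x$ gives the claim for all $K$ at once, with no case analysis. The trade-off is exactly the one you flag yourself: for $K$ irreducible (the only case the paper actually uses, in Lemma~\ref{lem: phi}) your term-by-term bound $\Phi_n\geq q^n\phi(K)/|K|$ holds and your argument closes cleanly and transparently; but for composite $K$ the per-degree bound genuinely fails and you must fall back on the induction over prime factors of $\mathrm{rad}(K)$ that you sketch. That sketch is correct in outline — the reduction to $\sum_{E\mid\mathrm{rad}(K)}\tfrac{\mu(E)}{|E|}\min(\deg E,x)\leq 0$, the peel-off of a prime, and the non-negativity of the truncated M\"obius sum $\sum_{E\mid M_0,\deg E\leq s}\mu(E)/|E|=q^{-\lfloor s\rfloor}\Phi_{\lfloor s\rfloor}(M_0)$ all check out — but it is considerably more work than the paper's one-line convolution. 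So: your proof is valid and even somewhat more informative (it gives a concrete polynomial-counting interpretation of $H_K$), but the paper's reduction $H_K(x)\geq\tfrac{\phi(K)}{|K|}H_1(x)$ is the shorter route to the uniform statement, and you would do well to present the general-$K$ case as a complete argument rather than a sketch if you keep your approach.
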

\begin{proof}
For $\deg K\geq1$ we have 
\begin{eqnarray*}
H_{1}\left(x\right) & = & \sum_{\deg D<x}\frac{\mu^{2}\left(D\right)}{\phi\left(D\right)}=\sum_{L\mid K}\underset{(D,K)=L}{\sum_{\deg D<x}}\frac{\mu^{2}\left(D\right)}{\phi\left(D\right)}\\
 & = & \sum_{L\mid K}\underset{(H,\frac{K}{L})=(H,L)=1}{\sum_{\deg H<x-\deg L}}\frac{\mu^{2}\left(LH\right)}{\phi\left(LH\right)}\\
 & = & \sum_{L\mid K}\frac{\mu^{2}\left(L\right)}{\phi\left(L\right)}\underset{(H,K)=1}{\sum_{\deg H<x-\deg L}}\frac{\mu^{2}\left(H\right)}{\phi\left(H\right)}\\
 & = & \sum_{L\mid K}\frac{\mu^{2}\left(L\right)}{\phi\left(L\right)}H_{K}\left(x-\deg L\right)\\
 & \leq & \left(\sum_{L\mid K}\frac{\mu^{2}\left(L\right)}{\phi\left(L\right)}\right)H_{K}\left(x\right).
\end{eqnarray*}
 Now, since $\frac{\mu^{2}}{\phi}$ is a multiplicative function we
know 
\[
\sum_{L\mid K}\frac{\mu^{2}\left(L\right)}{\phi\left(L\right)}=\prod_{P\mid K}\left(1+\frac{1}{\left|P\right|-1}\right)=\prod_{P\mid K}\left(1-\frac{1}{\left|P\right|}\right)^{-1}=\frac{\left|K\right|}{\phi\left(K\right)}.
\]
 We also have 
\begin{eqnarray*}
H_{1}(x) & = & \sum_{\deg D<x}\frac{\mu^{2}\left(D\right)}{\left|D\right|}\prod_{P\mid D}\left(1-\frac{1}{\left|P\right|}\right)^{-1}\\
 & = & \sum_{\deg D<x}\frac{\mu^{2}\left(D\right)}{\left|D\right|}\prod_{P\mid D}\left(1+\frac{1}{\left|P\right|}+\frac{1}{\left|P\right|^{2}}+\ldots\right),
\end{eqnarray*}
 if we define $\kappa(N)$ to be the maximal square free divisor of
$N$, we get 
\[
=\sum_{\deg\kappa(N)<x}\frac{1}{\left|N\right|}\geq\sum_{\deg N<x}\frac{1}{\left|N\right|}=x.
\]
\end{proof}
\begin{cor}
\label{cor:selberg special case }For $\alpha\left(D\right)=\frac{1}{\left|D\right|}$,
$P_{z}=\prod_{\deg P\leq z}P$ and $z\geq1$, we have
\[
S\left(P_{z},z\right)\geq z.
\]
\end{cor}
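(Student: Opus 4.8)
The plan is to unwind the definition of $S(P_{z},z)$ for the particular weight $\alpha(D)=1/|D|$ and then feed the result into the preceding Proposition on $H_{K}$. First I would use that $\alpha$ is multiplicative, so $\alpha(P)=1/|P|$ for every prime $P$, and record the Euler factor
\[
\frac{\alpha(P)}{1-\alpha(P)}=\frac{1/|P|}{1-1/|P|}=\frac{1}{|P|-1}.
\]
With the definition of $S$ from Theorem~\ref{thm:selberg sieve} this gives
\[
S(P_{z},z)=\underset{\deg D\le z}{\sum_{D\mid P_{z}}}\ \prod_{P\mid D}\frac{1}{|P|-1}.
\]

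Next I would pin down the range of summation. Since $P_{z}=\prod_{\deg P\le z}P$ is a product of distinct monic irreducibles, a monic $D$ divides $P_{z}$ exactly when $\mu^{2}(D)=1$ and $p_{+}(D)\le z$. But the constraint $\deg D\le z$ already forces $\deg P\le\deg D\le z$ for every prime $P\mid D$, so the condition $p_{+}(D)\le z$ is automatic, and ``$D\mid P_{z}$'' may be replaced by ``$\mu^{2}(D)=1$''. For squarefree $D$ the multiplicativity of $\phi$ and $\phi(P)=|P|-1$ give $\phi(D)=\prod_{P\mid D}(|P|-1)$, hence $\prod_{P\mid D}\tfrac{1}{|P|-1}=\tfrac{\mu^{2}(D)}{\phi(D)}$. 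Combining, and using $\deg D\le z\iff\deg D<z+1$,
\[
S(P_{z},z)=\sum_{\deg D\le z}\frac{\mu^{2}(D)}{\phi(D)}=H_{1}(z+1),
\]
with $H_{1}$ as in the preceding Proposition (the case $K=1$).

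Finally, applying the preceding Proposition with $K=1$ — where the empty product over $P\mid 1$ equals $1$ — yields $H_{1}(x)\ge x$ for all $x$, so $S(P_{z},z)=H_{1}(z+1)\ge z+1\ge z$, which is the claim. Alternatively, one can bypass the Proposition entirely: expanding each Euler factor as $\tfrac{1}{|P|-1}=|P|^{-1}\sum_{j\ge0}|P|^{-j}$ rewrites $S(P_{z},z)$ as $\sum_{\deg\kappa(N)\le z}|N|^{-1}$, where $\kappa(N)$ is the squarefree kernel of $N$, and this is at least $\sum_{\deg N\le z}|N|^{-1}=z+1$. I expect no genuine obstacle here; the only points needing care are the bookkeeping between ``$\le z$'' and ``$<z+1$'' in the degree constraints and the observation that restricting to divisors of $P_{z}$ costs nothing once $\deg D\le z$ is imposed. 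If $z$ is not assumed integral, one simply replaces $z+1$ by $\lfloor z\rfloor+1\ge z$ throughout.
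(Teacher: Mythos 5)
Your proof is correct and follows essentially the same route as the paper: rewrite each Euler factor as $\frac{1/|P|}{1-1/|P|}=\frac{1}{|P|-1}$, observe that for squarefree $D$ the product over $P\mid D$ is $1/\phi(D)$, identify the sum with $H_{1}(z+1)$, and invoke the preceding Proposition (with $K=1$, so the product over $P\mid K$ is empty). The only cosmetic difference is that you fix $y=z$ from the outset, which immediately makes the coprimality condition vacuous; the paper keeps $y$ general, introduces $P_{z}^{c}=\prod_{z<\deg P\le y}P$, and only specializes to $y=z$ at the end --- a slightly more awkward presentation of the same argument. Your alternative $\kappa(N)$ computation is also valid, but it is precisely what happens inside the paper's proof of $H_{1}(x)\ge x$, so it is not really an independent route.
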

\begin{proof}
For $y\geq1$ 
\begin{eqnarray*}
S\left(P_{z},y\right) & = & \underset{\deg D\leq y}{\underset{D\mid P_{z}}{\sum}}\prod_{P\mid D}\frac{\nicefrac{1}{\left|P\right|}}{1-\nicefrac{1}{\left|P\right|}}=\underset{\deg D\leq y}{\underset{D\mid P_{z}}{\sum}}\frac{1}{\phi\left(D\right)}\\
 & = & \underset{\deg D\leq y}{\underset{\left(D,P_{z}^{c}\right)=1}{\sum}}\frac{\mu^{2}\left(D\right)}{\phi\left(D\right)}\geq\prod_{\deg P\leq z}\left(1-\frac{1}{\left|P\right|}\right)\left(y+1\right)\\
 & \geq & \prod_{\deg P\leq z}\left(1-\frac{1}{\left|P\right|}\right)y
\end{eqnarray*}
when $P_{z}^{c}=\frac{\prod_{\deg P\leq y}\left|P\right|}{P_{z}}=\prod_{z<\deg P\leq y}\left|P\right|$.
If we take $y=z$ we get the Corollary.\end{proof}
\begin{prop}
\label{prop:selberg error}If $r\left(D\right)=O\left(1\right)$ for
each square free polynomial d, then
\[
R\left(D,z\right)=O\left(q^{2z}\right).
\]
\end{prop}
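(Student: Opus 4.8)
The plan is to separate the contribution of $r$ from the arithmetic weight $3^{\omega(D)}$, reducing everything to an elementary count of polynomials of bounded degree. From the definition
\[
R(\mathscr{D},z)=\underset{\deg D\leq2z}{\underset{D\mid\mathscr{D}}{\sum}}3^{\omega(D)}\left|r(D)\right|,
\]
the hypothesis $r(D)=O(1)$ (uniform in $D$) lets me pull out the implied constant, so that $R(\mathscr{D},z)\ll\sum_{D\mid\mathscr{D},\,\deg D\leq2z}3^{\omega(D)}$. All terms are non-negative, so I may drop the condition $D\mid\mathscr{D}$; it then suffices to bound $\Sigma:=\sum_{\deg D\leq2z}3^{\omega(D)}$, the sum over every monic $D$ of degree at most $2z$.

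To estimate $\Sigma$ I would note that, for a squarefree $D=P_{1}\cdots P_{k}$, the number $3^{\omega(D)}=3^{k}$ is exactly the number of ordered triples $(E,F,G)$ of monic polynomials with $EFG=D$ (distribute each $P_{i}$ among three slots), and in general $3^{\omega(D)}\leq d_{3}(D)$, where $d_{3}$ is the three-fold divisor function from Conjecture~\ref{conj moments}. Since $\sum_{N}d_{3}(N)u^{\deg N}=\bigl(\sum_{N}u^{\deg N}\bigr)^{3}=(1-qu)^{-3}$ --- the one-step analogue of the generating identity behind the bound $\sum_{\deg N\leq x}d(N)\ll xq^{x}$ recorded earlier --- one has $\sum_{\deg N=n}d_{3}(N)=\binom{n+2}{2}q^{n}$, and hence
\[
\Sigma\leq\sum_{n=0}^{2z}\binom{n+2}{2}q^{n}\ll z^{2}q^{2z}.
\]
Combining the two displays gives $R(\mathscr{D},z)\ll z^{2}q^{2z}$, which is $O(q^{2z})$ up to a power of $z$.

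The estimate itself is completely routine; the only point worth flagging --- and the only thing standing between this argument and the clean bound as stated --- is the polynomial-in-$z$ loss. It is genuine: in fact $\Sigma\asymp z^{2}q^{2z}$, since the pole of order three of $(1-qu)^{-3}$ forces quadratic-in-degree growth of $\sum_{\deg N=n}d_{3}(N)$, so a literal ``$O(q^{2z})$'' is mildly optimistic. It is, however, harmless in every application of the proposition, where $R(\mathscr{D},z)$ is weighed against the main term $A/S(P_{z},z)\leq A/z$ coming from the special case above; thus in a careful write-up I would either record the bound with the explicit factor $z^{2}$ or simply absorb it against the main term. There is no substantive obstacle once the weight $3^{\omega(D)}$ has been dominated by the divisor count.
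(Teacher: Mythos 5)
Your argument is essentially correct as far as it goes, and your instinct about the $z^{2}$ loss is accurate — but that loss means you have not proved the proposition as stated. The source of the gap is that you are bounding the wrong quantity: the display
\[
R(\mathscr{D},z)=\underset{\deg D\leq2z}{\underset{D\mid\mathscr{D}}{\sum}}3^{\omega(D)}\left|r(D)\right|
\]
is itself already an \emph{upper bound} for the true Selberg remainder, obtained by collapsing the double sum
\[
\underset{D_{1},D_{2}\mid\mathscr{D}}{\sum\sum}\left|r\left(\left[D_{1},D_{2}\right]\right)\right|\left|\lambda_{D_{1}}\lambda_{D_{2}}\right|
\]
over $D=[D_{1},D_{2}]$ and using $\left|\lambda_{D}\right|\leq1$ together with the fact that a squarefree $D$ arises from at most $3^{\omega(D)}$ ordered pairs. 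That collapse is wasteful: when $\deg D$ is close to $2z$, most of the $3^{\omega(D)}$ pairs have one factor of degree larger than $z$ and are not actually present in the original sum, and this waste is precisely what produces your extra $z^{2}$. The paper's proof works instead with the uncollapsed double sum: since the Selberg weights have support on $\deg D_{i}\leq z$ and $\left|\lambda_{D_{i}}\right|\leq1$, and since $r$ is $O(1)$, one bounds each factor trivially and obtains
\[
\underset{\deg D_{1},\deg D_{2}\leq z}{\underset{D_{1},D_{2}\mid\mathscr{D}}{\sum}}1\leq\left(\sum_{\deg D\leq z}1\right)^{2}\leq\left(q^{z+1}\right)^{2}\ll q^{2z},
\]
with no divisor-function asymptotics and no polynomial loss in $z$. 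So the clean bound $O(q^{2z})$ is genuinely true for the remainder that actually enters the sieve inequality, and the route to it is to count pairs $(D_{1},D_{2})$ directly rather than to sum $3^{\omega(D)}$; your bound $3^{\omega(D)}\leq d_{3}(D)$, while correct, throws away the degree constraint on each $D_{i}$ and cannot recover the stated exponent. (You are right that in every downstream application the extra $z^{2}$ would be harmless, but the proposition should be proved as stated, and the pair-counting argument does that at no extra cost.)
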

\begin{proof}
By definition 
\begin{eqnarray*}
R(\mathscr{D},y) & = & \underset{D_{1},D_{2}\mid\mathscr{D}}{\sum\sum}r\left(\left[D_{1},D_{2}\right]\right)\left|\lambda_{D_{1}}\lambda_{D_{2}}\right|\leq\underset{\deg D_{1},\deg D_{2}\leq z}{\underset{D_{1},D_{2}\mid\mathscr{D}}{\sum}}r\left(\left[D_{1},D_{2}\right]\right)\\
 & \ll & \underset{\deg D_{1},\deg D_{2}\leq z}{\underset{D_{1},D_{2}\mid\mathscr{D}}{\sum}}1\leq\sum_{\deg D_{1}\leq z}\sum_{\deg D_{2}\leq z}1=\left(q^{z+1}\right)^{2}.
\end{eqnarray*}

\end{proof}

\section{\label{sec:Divisor-Sums}Divisor Sums}

In 1980 P. Shiu \cite{key-3} proved that for arithmetic functions
$f$ which satisfy similar properties as the divisor function for
integers, $0<\alpha<\frac{1}{2},\:0<\beta<\frac{1}{2}$ and $a,\: k$
are integers satisfying $0<a<k,\;\left(a,k\right)=1$, then, as $x\rightarrow\infty$, 

\[
\underset{n\equiv a\pmod k}{\underset{x-y<n\leq x}{\sum}}f\left(n\right)\ll\frac{y}{\phi\left(k\right)}\frac{1}{\log x}\exp\left(\underset{p\nmid k}{\underset{p\leq x}{\sum}}\frac{f\left(p\right)}{p}\right),
\]
uniformly in $a,\: k$ and $y$ provided that $k<y^{1-\alpha},\; x^{\beta}<y\leq x$. 

Here we give a polynomial analog to a special case of the above, to
be used it in section \ref{sec:The-Fourth-Moment}. 
\begin{thm}
\label{thm:divisor sum}Let $0<\alpha<1$, and let $A,\: K$ be monic
polynomials, $K$ prime, satisfying $\left(A,K\right)=1$. Then, as
$x\rightarrow\infty$

\[
\underset{N\equiv A\pmod K}{\underset{\deg N\leq x}{\sum}}d\left(N\right)\ll\frac{q^{x}x}{\phi\left(K\right)}
\]
uniformly in $A$ and $K$ provided that $\deg K<\left(1-\alpha\right)x$.
\end{thm}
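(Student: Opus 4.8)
Write $\Sigma:=\sum_{\deg N\le x,\,N\equiv A\,(\mathrm{mod}\,K)}d(N)$ and set $\mathfrak{k}=\deg K$. The plan is to run the function--field analogue of Shiu's argument, using the Selberg sieve of Section~\ref{sec:The-Selberg-Sieve} to control the contribution of divisors built from large primes. First dispose of the short range $\deg N<\mathfrak{k}$: there the congruence $N\equiv A\,(\mathrm{mod}\,K)$ forces $N$ to be the unique residue of degree $<\mathfrak{k}$, call it $A^{*}$, so this part contributes at most $d(A^{*})$, and by the standard bound $d(M)\ll_{\varepsilon}|M|^{\varepsilon}$ (which follows from the prime polynomial theorem via $d(M)\le 2^{\Omega(M)}$ with $\Omega(M)\ll\deg M/\log\deg M$) this is $\ll_{\varepsilon}q^{\varepsilon\mathfrak{k}}\le q^{\varepsilon(1-\alpha)x}$; choosing $\varepsilon=\varepsilon(\alpha)$ small enough this is $\ll q^{x}/\phi(K)$ since $\phi(K)<q^{(1-\alpha)x}$. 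Thus it suffices to treat the range $\mathfrak{k}\le\deg N\le x$.

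Fix a parameter $z=z(x)$, a positive integer, to be chosen later. For each monic $N$ write $N=BC$, where $B$ is the product of the prime powers $P^{e}\Vert N$ with $\deg P\le z$ and $C=N/B$; then $(B,C)=1$, $B$ is $z$-smooth (in the notation of Section~\ref{sub:background.1}, $p_{+}(B)\le z$), every prime factor of $C$ has degree $>z$, and $d(N)=d(B)d(C)$ by multiplicativity. Since $\deg C\le x$ and each prime factor of $C$ has degree $\ge z+1$, we get $\Omega(C)\le x/z$ and hence $d(C)\le 2^{\Omega(C)}\le 2^{x/z}$. Because $(A,K)=1$ and $K$ is prime, $(N,K)=1$, so $(B,K)=1$. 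Summing over $B$ and the residue $C\equiv AB^{-1}\,(\mathrm{mod}\,K)$ gives
\[
\Sigma\ \ll_{\varepsilon}\ q^{\varepsilon x}\ +\ 2^{x/z}\sum_{\substack{B\ z\text{-smooth}\\ \deg B\le x}}d(B)\cdot\#\bigl\{C:\ p_{-}(C)>z,\ \deg C\le x-\deg B,\ C\equiv AB^{-1}\,(\mathrm{mod}\,K)\bigr\}.
\]

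For each $B$ the inner count is exactly $A(P_{z})$ in the set-up of Section~\ref{sec:The-Selberg-Sieve}, taken with $a_{M}$ the indicator of $M\equiv AB^{-1}\,(\mathrm{mod}\,K)$, $A\ll q^{x-\deg B}/\phi(K)+1$, $\alpha(D)=1/|D|$, and $r(D)=O(1)$ (the latter because a residue class modulo $K$ of degree bound $y$ has either $\asymp q^{y}/\phi(K)$ or, when $y<\mathfrak{k}$, at most one element). By Corollary~\ref{cor:selberg special case } ($S(P_{z},z)\ge z$) and Proposition~\ref{prop:selberg error} ($R(P_{z},z)=O(q^{2z})$) we get $A(P_{z})\le A/S(P_{z},z)+R(P_{z},z)\ll q^{x-\deg B}/(z\,\phi(K))+q^{2z}$. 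Feeding this back,
\[
\Sigma\ \ll_{\varepsilon}\ q^{\varepsilon x}\ +\ 2^{x/z}\Bigl(\frac{q^{x}}{z\,\phi(K)}\sum_{B\ z\text{-smooth}}\frac{d(B)}{|B|}\ +\ q^{2z}\sum_{\substack{B\ z\text{-smooth}\\ \deg B\le x}}d(B)\Bigr).
\]
Here the first inner sum is $\prod_{\deg P\le z}(1-|P|^{-1})^{-2}\ll z^{2}$ by the polynomial Mertens estimate (a consequence of the prime polynomial theorem together with $\sum_{\deg P\le z}|P|^{-1}=\log z+O(1)$), while the second is a de Bruijn/Rankin-type smooth-number sum, bounded for any $0<\sigma<1$ by $q^{\sigma x}\prod_{\deg P\le z}(1-q^{-\sigma\deg P})^{-2}$.

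The remaining step is to choose $z$ so that both terms are $\ll q^{x}x/\phi(K)$. Taking $\sigma=\alpha/2$ makes the error term $2^{x/z}q^{2z}\cdot q^{(\alpha/2)x+o(x)}$, which is $\ll q^{\alpha x}\ll q^{x}/\phi(K)$ as soon as $z$ is small enough that $2^{x/z}q^{2z}=q^{o(x)}$ (e.g. $z$ a slowly growing function of $x$); the main term is then $2^{x/z}\cdot z\cdot q^{x}/\phi(K)$. This last point is the main obstacle: with $z$ pulled out this crudely one only gets $2^{x/z}z$ in place of $x$, which is not $O(x)$, so one does not directly recover the sharp power of $\deg Q$ claimed in the theorem. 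Resolving this requires the more careful bookkeeping of Shiu's original argument — not bounding $d(C)$ by $2^{x/z}$ uniformly but keeping it inside the sieve (equivalently, handling the $d$-weighted count of $z$-rough polynomials in an arithmetic progression directly and exploiting that $z$-smooth $B$ with $\deg B$ close to $x$ are rare), combined with the precise dependence of the polynomial Mertens and de Bruijn estimates on $z$ and $x$. That is where essentially all of the work lies; the skeleton above — short range handled pointwise, smooth/rough split, Selberg sieve for the rough part, smooth-number sums for the smooth part — is the structure.
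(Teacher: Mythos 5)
Your skeleton — dispose of the short range pointwise, split $N=B_ND_N$ with $B_N$ the $z$-smooth part, apply the Selberg sieve to the rough cofactor in the progression, control the smooth sums via Rankin/Mertens — is exactly the paper's scaffolding, and you are right that the one remaining obstacle is that the uniform bound $d(D_N)\le 2^{x/z}$ is too lossy. That is a genuine gap, and it is precisely where the content of Shiu's method lives; you have named the problem but not solved it.

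Here is concretely how the paper closes it. First, $z$ is taken proportional to $x$, namely $z=\frac{\alpha}{10}x$, so the sieve remainder $q^{2z}$ is not small; it is absorbed only at the very end using the hypothesis $\deg K<(1-\alpha)x$, via $q^{2z}<q^{x}/(\phi(K)z)$. This is a different regime from the one your computation drifts into (you reason yourself into $z=o(x)$, at which point $2^{x/z}$ does blow up). Second, instead of bounding $d(D_N)$ uniformly, the proof splits into four classes according to $p_-(D_N)$ and $\deg B_N$. In class I ($p_-(D_N)>z/2$) one has $\Omega(D_N)<20/\alpha$, so $d(D_N)=O_\alpha(1)$ and the sieve bound of Lemma~\ref{lem: phi} together with $\sum_{\deg B\le z}d(B)/|B|\ll z^2$ (Lemma~\ref{lem: divisor sum}) already gives the main term. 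Classes II and III are shown to contain few $N$ — of order $q^{x}q^{-z/4}/|K|$ — so even the crude bound $d(N)\ll q^{z/8}$ suffices there. The crux is class IV, which is handled by a further dyadic decomposition $\frac{z}{r+1}<p_-(D_N)\le\frac{z}{r}$ for $2\le r\le r_0$: on each slice $d(D_N)\le\gamma^{r}$ with $\gamma=2^{20/\alpha}$, and this exponential growth in $r$ is beaten by the smooth-number saving $\sum_{\deg B\ge z/2,\,p_+(B)<z/r}d(B)/|B|\ll z^{2}q^{-\frac{1}{10}r\log_q r}$ of Lemma~\ref{lem: div sum bound}, because $q^{-\frac{1}{10}r\log_q r}$ decays superexponentially in $r$. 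Summing $\sum_r (r+1)\gamma^{r}q^{-\frac{1}{10}r\log_q r}$ converges, giving $\ll q^{x}z/\phi(K)+z^{2}q^{2z}$, and the constraint on $\deg K$ again folds the error into the main term. So the ``more careful bookkeeping'' you gesture at is: keep $d(D_N)$ inside the sum, key its size to $p_-(D_N)$, and trade it against the rarity of $z$-smooth $B$ whose largest prime factor is correspondingly small — quantified by Lemmas~\ref{lem: psi} and~\ref{lem: div sum bound}, both of which you would need to prove to complete your argument.
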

Note that since it is known that 
\[
\sum_{\deg N\leq x}d\left(N\right)\ll q^{x}x,\quad\frac{\#\left\{ N\::\:\deg N\leq x,\: N\equiv A\pmod K\right\} }{\#\left\{ N\::\:\deg N\leq x\right\} }\approx\frac{1}{\phi\left(K\right)},
\]
we expect to find an approximation of the above form. 

To prove this Theorem we shall require the following Lemmas:

\subsection{Preliminary Lemmas}
\begin{lem}
\label{lem: psi}For all sufficiently large x 

\[
\Psi\left(x,\log_{q}x+\log_{q}\log_{q}x\right)\leq q^{\frac{3x}{\left(\log_{q}x\right)^{\nicefrac{1}{2}}}}.
\]
\end{lem}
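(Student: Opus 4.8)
The plan is Rankin's trick followed by optimization of a free parameter. Note first that the de Bruijn estimate $\Psi(x,z)=q^{x}\rho(x/z)+O(q^{x}/z)$ recalled earlier is useless here: in the present range $x/z\to\infty$, so $\rho(x/z)$ decays faster than any negative power of $z$ and the error term $O(q^{x}/z)$ dwarfs the main term. A direct upper bound is therefore needed. For every real $\sigma>0$, since $q^{\sigma(x-\deg N)}\ge1$ whenever $\deg N\le x$ and since every monic polynomial all of whose prime factors have degree $\le z$ factors uniquely into such primes,
\[
\Psi(x,z)\le q^{\sigma x}\sum_{p_{+}(N)\le z}|N|^{-\sigma}=q^{\sigma x}\prod_{\deg P\le z}\bigl(1-|P|^{-\sigma}\bigr)^{-1},
\]
the product being finite and convergent. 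Taking $\log_{q}$ and using $\ln\frac{1}{1-y}\le\frac{y}{1-y}$ together with the bound $\pi(j)\le q^{j}/j$ from the prime polynomial theorem gives
\[
\log_{q}\Psi(x,z)\le\sigma x+\frac{1}{\ln q}\sum_{j=1}^{z}\frac{q^{j}}{j\,(q^{\sigma j}-1)}.
\]

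Next I would choose $\sigma=\log_{q}\log_{q}x\,/\,z$, so that $(1-\sigma)z=\log_{q}x$, $\sigma\to0$, and $1/\sigma=\log_{q}x/\log_{q}\log_{q}x+1$; then $\sigma x\le x\log_{q}\log_{q}x/\log_{q}x$. I split the remaining sum at $j_{0}=\lceil 1/\sigma\rceil$. For $j\ge j_{0}$ one has $q^{\sigma j}\ge q\ge2$, so $q^{\sigma j}-1\ge\tfrac12 q^{\sigma j}$ and the $j$-th term is at most $2q^{(1-\sigma)j}/j$; this series is dominated by its top term $j=z$ (the ratio is $q^{-(1-\sigma)}\le q^{-1/2}$), contributing $\ll q^{(1-\sigma)z}/z=x/z\ll x/\ln x$ after division by $\ln q$ — here it is essential that the factor $1/j$ is retained. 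For $1\le j<j_{0}$ I use $q^{\sigma j}-1\ge\sigma j\ln q$, so the $j$-th term is at most $q^{j}/(\sigma\ln q\,j^{2})$; this series is again dominated by its top term, contributing $\ll q^{1/\sigma}/(\sigma\ln q)$, and since $q^{1/\sigma}=q\cdot x^{1/\log_{q}\log_{q}x}=x^{o(1)}$ while $1/\sigma\ll\log_{q}x$, this is $\ll x^{1/2}$. Altogether
\[
\log_{q}\Psi(x,z)\le\frac{x\log_{q}\log_{q}x}{\log_{q}x}+O\!\Bigl(\frac{x}{\ln x}\Bigr)+O\bigl(x^{1/2}\bigr),
\]
and each of the three terms is $o\bigl(x/\sqrt{\log_{q}x}\bigr)$ — the first because $\log_{q}\log_{q}x/\sqrt{\log_{q}x}\to0$, the others trivially — so for all large $x$ each is at most $x/\sqrt{\log_{q}x}$ and their sum is at most $3x/\sqrt{\log_{q}x}$, which is the claim.

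The main obstacle is the small-index range $j<j_{0}$, where $q^{\sigma j}-1$ is small and the crude bound on $\pi(j)/(q^{\sigma j}-1)$ is large; one must check both that there are few such $j$ (only $j<1/\sigma\ll\log_{q}x$) and that $q^{1/\sigma}$ is subpolynomial in $x$, which is exactly where the secondary term $\log_{q}\log_{q}x$ in the definition of $z$ — forcing $\sigma$ not too small — is used. Everything else is routine estimation, and the constant $3$ in the statement leaves generous room, the true order of $\log_{q}\Psi(x,z)$ being $\asymp x\log_{q}\log_{q}x/\log_{q}x$.
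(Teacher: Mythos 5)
Your proof is correct, and it starts from the same place as the paper's — Rankin's method, bounding $\Psi(x,z)\le q^{\sigma x}\prod_{\deg P\le z}(1-|P|^{-\sigma})^{-1}$ — but the subsequent optimisation is genuinely different. The paper bounds the Euler product rather crudely, using $\sum_{\deg P\le y}|P|^{-\delta}\le\frac{1}{\delta}\sum_{\deg P\le y}\frac{1}{\deg P}\le\frac{13}{6\delta}\frac{q^y}{y^2}$, and then sets $\delta=\tfrac{3}{2}(\log_q x)^{-1/2}$ so that the two contributions $\delta x$ and $\frac{13}{6}\frac{q^y}{\delta y^2}$ both come out of order $x/\sqrt{\log_q x}$; the computation lands at roughly $2.94\,x/\sqrt{\log_q x}$, so the constant $3$ in the statement is essentially forced by the method. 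You instead keep the sum $\sum_{j\le z}\frac{q^j}{j(q^{\sigma j}-1)}$ intact, split it at $j_0=\lceil1/\sigma\rceil$, and choose $\sigma=\log_q\log_q x/z$ so that $q^{(1-\sigma)z}=x$; this balances the two pieces at the asymptotically correct order $x\log_q\log_q x/\log_q x$, which is much smaller than $x/\sqrt{\log_q x}$, so the constant $3$ is obtained with room to spare. Your argument requires a bit more care in two places — the geometric domination $\sum_{j\ge j_0}q^{(1-\sigma)j}/j\ll q^{(1-\sigma)z}/z$ depends on $j_0\to\infty$ (otherwise the ratio $\frac{j}{j-1}q^{-(1-\sigma)}$ can exceed $1$ at the low end), and the estimate $q^{1/\sigma}/\sigma\ll x^{1/2}$ uses $\log_q\log_q x\to\infty$ — but both are fine for $x$ large, which is all the lemma asserts. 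In short: same Rankin skeleton, but your tuned $\sigma$ gives the sharper (and essentially sharp) bound, whereas the paper's simpler choice just barely clears the bar.
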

\begin{proof}
Let $0\leq\delta\leq1$. For all large $y$ we have 
\begin{eqnarray*}
\underset{\deg P\leq y}{\sum_{P\mbox{ prime}}}\frac{1}{\deg P} & = & \underset{\deg P\leq\delta y}{\sum_{P\mbox{ prime}}}\frac{1}{\deg P}+\underset{\delta y\leq\deg P\leq y}{\sum_{P\mbox{ prime}}}\frac{1}{\deg P}\\
 & \leq & q^{\delta y}+\left(\delta y\right)^{-2}\underset{\delta y\leq\deg P\leq y}{\sum_{P\mbox{ prime}}}\deg p\\
 & = & q^{\delta y}+\left(\delta y\right)^{-2}\sum_{\delta y\leq n\leq y}n\pi\left(n\right),
\end{eqnarray*}
 by prime polynomials theorem we know $n\pi\left(n\right)\leq q^{n}$
(as discussed in subsection \ref{sub:background.1}), so we arrive
at 
\begin{eqnarray*}
 & \leq & q^{\delta y}+\left(\delta y\right)^{-2}\sum_{\delta y\leq n\leq y}q^{n}\\
 & = & q^{\delta y}+\left(\delta y\right)^{-2}\frac{q^{y}-q^{\left\lceil \delta y\right\rceil -1}}{1-\nicefrac{1}{q}}\\
 & \leq & q^{\delta y}+\left(2+\epsilon\right)\frac{q^{y}}{\left(\delta y\right)^{2}}
\end{eqnarray*}
for $\epsilon>0$. By setting $\delta=\sqrt{\nicefrac{25}{26}},\:\epsilon=\nicefrac{1}{12}$,
we get 
\[
\underset{\deg P\leq y}{\sum_{P\mbox{ prime}}}\frac{1}{\deg P}\leq2\frac{1}{6}\frac{q^{y}}{y^{2}}.
\]

We now use Rankin's method. Let $\delta>0$. We have, for large $y$,

\begin{eqnarray*}
\Psi\left(x,y\right) & = & \underset{p_{+}\left(N\right)\leq y}{\underset{\deg N\leq x}{\sum}}1\leq q^{\delta x}\underset{p_{+}\left(N\right)\leq y}{\underset{\deg N\leq x}{\sum}}\frac{1}{\left|N\right|^{\delta}}\\
 & \leq & q^{\delta x}\underset{p_{+}\left(N\right)\leq y}{\underset{N\:\mbox{monic}}{\sum}}\frac{1}{\left|N\right|^{\delta}}\\
 & = & q^{\delta x}\underset{\deg P\leq y}{\prod}\left(1+\frac{1}{\left|P\right|^{\delta}}+\frac{1}{\left|P\right|^{2\delta}}+\ldots\right)\\
 & = & q^{\delta x}\underset{\deg P\leq y}{\prod}\left(1+\frac{1}{\left|P\right|^{\delta}-1}\right)\\
 & \leq & q^{\delta x}\exp\left(\underset{\deg P\leq y}{\sum}\frac{1}{\left|P\right|^{\delta}}\right)\leq q^{\delta x}\exp\left(\frac{1}{\delta}\underset{\deg P\leq y}{\sum}\frac{1}{\deg P}\right)\\
 & \leq & q^{\delta x}\exp\left(\frac{13}{6}\frac{q^{y}}{\delta y^{2}}\right)\leq q^{\delta x+\nicefrac{13}{6}\frac{q^{y}}{\delta y^{2}}}.
\end{eqnarray*}
When $y=\log_{q}x+\log_{q}\log_{q}x$ and set $\delta=\nicefrac{3}{2}\log_{q}^{-\nicefrac{1}{2}}x$,
the result follows.\end{proof}
\begin{lem}
\label{lem: phi}Let $A$ and $K$ satisfy $\deg A<\deg K,\;\left(A,K\right)=1$.
Assume that $K$ is a prime polynomial which satisfies $\deg K<x$
and $z\geq2$. Then 

\[
\Phi\left(x,z;K,A\right)\leq\frac{q^{x}}{\phi\left(K\right)z}+q^{2z}.
\]
\end{lem}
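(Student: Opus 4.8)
The plan is to bound $\Phi(x,z;K,A)$ by applying the Selberg sieve special case developed in Section \ref{sec:The-Selberg-Sieve}. First I would set up the sieve with $a_N = 1$ for each monic $N$ of degree $< x$ satisfying $N \equiv A \pmod K$, and $a_N = 0$ otherwise, so that $A = \#\{N : \deg N < x,\ N \equiv A \pmod K\}$, which equals $q^{x}/\phi(K) + O(1)$ since $K$ is prime and $\deg A < \deg K$ (the residue classes coprime to $K$ are equidistributed among monic polynomials of each degree up to boundary effects). The sifting set is $\mathscr{D} = P_z = \prod_{\deg P \le z} P$, and $\Phi(x,z;K,A) = A(P_z)$, the count of $N$ in our sequence with no prime factor of degree $\le z$ — note that $(N,K)=1$ automatically since $\deg A < \deg K$ forces $K \nmid N$, so we may harmlessly include $K$ in or exclude it from the sieve.

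Next I would verify the sieve hypotheses. For squarefree $D$, the number of $N$ with $\deg N < x$, $N \equiv A \pmod K$, and $D \mid N$ is, writing $N = DM$, the count of $M$ with $\deg M < x - \deg D$ and $DM \equiv A \pmod K$; since $(D,K)=1$ this congruence pins $M$ to a single residue class mod $K$, giving $A_D = q^{x-\deg D}/\phi(K) + O(1) = A\alpha(D) + r(D)$ with $\alpha(D) = 1/|D|$ and $r(D) = O(1)$. This is exactly the special case treated in the excerpt. Applying Theorem \ref{thm:selberg sieve} with these data, together with Corollary \ref{cor:selberg special case } which gives $S(P_z, z) \ge z$, and Proposition \ref{prop:selberg error} which gives $R(P_z, z) = O(q^{2z})$, yields
\[
\Phi(x,z;K,A) = A(P_z) \le \frac{A}{S(P_z,z)} + R(P_z,z) \le \frac{q^{x}/\phi(K) + O(1)}{z} + O(q^{2z}) \le \frac{q^{x}}{\phi(K)z} + q^{2z},
\]
where the $O(1)/z$ term and the implied constant in $O(q^{2z})$ are absorbed (using $z \ge 2$) into the clean final bound.

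The main obstacle is bookkeeping rather than depth: one must check that the error term $r(D) = O(1)$ is genuinely uniform in $A$ and $K$ (it is, since it comes only from the $\pm 1$ discrepancy in counting an arithmetic progression of polynomials of bounded degree), and that the Selberg special case as quoted — which was stated for the sequence $a_N$ with $\sum_{\deg N \le x} a_N = A$ — applies verbatim with the degree restriction $\deg N < x$ rather than $\le x$; this is a harmless shift. One also needs $\deg K < x$ so that $A \gg q^x/\phi(K)$ is the dominant contribution and the sieve is not vacuous, which is precisely the stated hypothesis. The only genuinely quantitative point is that $S(P_z,z) \ge z$ rather than merely $\gg z$, so that the leading constant in the bound is exactly $1$; this is supplied by Corollary \ref{cor:selberg special case }.
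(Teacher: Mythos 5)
Your proposal is correct and follows essentially the same route as the paper: both set up the Selberg sieve with the weights $a_N$ supported on monic $N\equiv A \pmod K$ of degree $<x$, sifting set $P_z=\prod_{\deg P\le z}P$, density $\alpha(D)=1/|D|$ with $r(D)=O(1)$, and then invoke Corollary~\ref{cor:selberg special case } to get $S(P_z,z)\ge z$ and Proposition~\ref{prop:selberg error} to get $R(P_z,z)\ll q^{2z}$. Your accounting is if anything slightly more careful than the paper's (you keep the $O(1)$ discrepancy in $A$ explicit and note that $(N,K)=1$ is automatic), but the argument is the same.
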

\begin{proof}
We use the Selberg sieve method (\secref{The-Selberg-Sieve}) to prove
this Lemma. We know 
\[
B:=\underset{N\equiv A\pmod K}{\sum_{\deg N<x}}1=\frac{q^{x}}{\phi\left(K\right)}<\infty,
\]
also, for each polynomial $D$ 
\[
A_{D}:=\underset{N\equiv0\pmod D}{\underset{N\equiv A\pmod K}{\underset{\deg N<x}{\sum}}}1=B\alpha\left(D\right)+r\left(D\right),
\]
when $\alpha\left(D\right)=\frac{1}{\left|D\right|}$ and $r\left(D\right)=O\left(1\right)$.
Since $\deg N\leq x$ and $K$ prime we know $\left(N,K\right)=1$
and can estimate 
\[
\#\{N:\:\deg N\leq x,\: N\equiv A\pmod K,\: D\mid N\}=\frac{q^{x}}{\phi\left(k\right)\left|D\right|}+O\left(1\right),
\]
$\alpha$ is a multiplicative function with $0\leq\alpha\left(D\right)\leq1$
for each $D$. Let $P_{z}:=\underset{\deg P\leq z}{\prod}P$, from
the polynomial analog to the Selberg sieve, we know

\[
\Phi\left(x,z;K,A\right)=\underset{p_{-}\left(N\right)>z}{\underset{N\equiv A\pmod K}{\underset{\deg N<x}{\sum}}}1=\underset{\left(N,P_{z}\right)=1}{\underset{N\equiv A\pmod K}{\underset{\deg N<x}{\sum}}}1\leq\frac{B}{S\left(P_{z},z\right)}+R\left(P_{z},z\right).
\]
By the special case, Corollary \ref{cor:selberg special case }, we
get 

\begin{eqnarray*}
S\left(P_{z},z\right) & = & \underset{\deg D\leq z}{\underset{D\mid P_{z}}{\sum}}\prod_{P\mid D}\frac{1}{\left|P\right|-1}\geq z
\end{eqnarray*}
since for all $\deg D\leq z$, $D\mid P_{z}$, and by Proposition
\ref{prop:selberg error} we have 

\[
R\left(P_{z},z\right)=\underset{\deg D\leq2z}{\underset{D\mid P_{z}}{\sum}}3^{\omega(D)}\left|r\left(D\right)\right|\ll q^{2z}.
\]
\end{proof}
\begin{lem}
\label{lem: divisor sum}For all integers $x>0$

\[
\sum_{\deg N\leq x}\frac{d\left(N\right)}{\left|N\right|}=\frac{\left(x+1\right)\left(x+2\right)}{2}.
\]
\end{lem}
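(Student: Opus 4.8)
The plan is to evaluate the sum by grouping the monic polynomials $N$ according to their degree. Since $|N|=q^{n}$ whenever $\deg N=n$, we have
\[
\sum_{\deg N\leq x}\frac{d(N)}{|N|}=\sum_{n=0}^{x}\frac{1}{q^{n}}\sum_{\deg N=n}d(N),
\]
so the whole computation reduces to evaluating the inner sum $\sum_{\deg N=n}d(N)$.

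For the inner sum I would write $d(N)=\sum_{AB=N}1$, where the sum runs over ordered pairs of monic polynomials $A,B$ with product $N$ (these correspond bijectively to the monic divisors of $N$). Summing over all monic $N$ of degree $n$ therefore counts all ordered pairs $(A,B)$ of monic polynomials with $\deg A+\deg B=n$; since there are exactly $q^{a}$ monic polynomials of degree $a$, this yields
\[
\sum_{\deg N=n}d(N)=\sum_{a=0}^{n}q^{a}\,q^{n-a}=(n+1)q^{n}.
\]
(Equivalently, $(n+1)q^{n}$ is the coefficient of $u^{n}$ in $\bigl(\sum_{A\text{ monic}}u^{\deg A}\bigr)^{2}=(1-qu)^{-2}$.)

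Substituting this back, the factors $q^{n}$ cancel and one is left with $\sum_{n=0}^{x}(n+1)=\sum_{m=1}^{x+1}m=\tfrac{(x+1)(x+2)}{2}$, which is the asserted identity. There is essentially no obstacle here: the only point needing any care is the elementary count $\sum_{\deg N=n}d(N)=(n+1)q^{n}$, which is immediate from the factorization structure of monic polynomials over $\mathbb{F}_{q}$. One could alternatively start from the closed form for $\sum_{\deg N\leq x}d(N)$ recorded in the Background section and recover the weighted sum by partial summation, but the degree-by-degree computation above is shorter and self-contained.
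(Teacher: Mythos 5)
Your proof is correct and is essentially the same as the paper's: both rest on expanding $d(N)=\sum_{AB=N}1$ and then counting ordered pairs of monic polynomials by degree, differing only in whether one groups by $\deg N=n$ first (yours) or sums directly over $(A,B)$ with $\deg A+\deg B\le x$ (the paper's).
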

\begin{proof}
We have

\begin{eqnarray*}
\sum_{\deg N\leq x}\frac{d\left(N\right)}{\left|N\right|} & = & \sum_{\deg A\leq x}\sum_{\deg B\leq x-\deg A}\frac{1}{\left|AB\right|}\\
 & = & \sum_{n=0}^{x}\sum_{\deg A=n}\frac{1}{\left|A\right|}\sum_{m=0}^{x-n}\sum_{\deg B=m}\frac{1}{\left|B\right|}\\
 & = & \sum_{n=0}^{x}\sum_{m=0}^{x-n}1=\sum_{n=0}^{x}x-n+1\\
 & = & \frac{\left(x+1\right)\left(x+2\right)}{2}.
\end{eqnarray*}
\end{proof}
\begin{lem}
\label{lem: div sum bound}As $z\rightarrow\infty$

\[
\underset{p_{+}(N)\leq\frac{z}{r}}{\underset{\deg N\geq\frac{z}{2}}{\sum}}\frac{d(N)}{\left|N\right|}\ll z^{2}q^{-\frac{1}{10}r\log_{q}r}
\]
provided that $1\leq r\leq\frac{z}{\log_{q}z}$.\end{lem}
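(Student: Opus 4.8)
The plan is to use Rankin's trick. For any $\sigma\in(0,1)$, since $|N|^{-1}\le q^{(\sigma-1)z/2}|N|^{-\sigma}$ whenever $\deg N\ge z/2$, and $\sum_{m\ge0}d(P^{m})|P|^{-m\sigma}=(1-|P|^{-\sigma})^{-2}$,
\[
\underset{p_{+}(N)\leq z/r}{\underset{\deg N\geq z/2}{\sum}}\frac{d(N)}{|N|}\ \le\ q^{-(1-\sigma)z/2}\underset{p_{+}(N)\leq z/r}{\sum}\frac{d(N)}{|N|^{\sigma}}\ =\ q^{-(1-\sigma)z/2}\prod_{\deg P\le z/r}\bigl(1-|P|^{-\sigma}\bigr)^{-2}.
\]
The crucial point is to take $\sigma=1-\frac{r\log_{q}r}{z}$, which lies in $(0,1)$ for $r>1$ and large $z$ because the hypothesis $r\le z/\log_{q}z$ forces $r\log_{q}r<z$. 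For this $\sigma$ the prefactor is exactly $q^{-(1-\sigma)z/2}=q^{-\tfrac12 r\log_{q}r}=r^{-r/2}$, which already beats the desired $q^{-\tfrac1{10}r\log_{q}r}=r^{-r/10}$; moreover $(1-\sigma)\deg P\le(1-\sigma)\frac zr=\log_{q}r$, so that $q^{(1-\sigma)\deg P}\le r$ for every prime $P$ appearing in the product.

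The remaining task is to check that the Euler product costs at most $z^{2}\exp(O_{q}(r))$. Writing $y=z/r$, I would factor it as $\bigl(\prod_{\deg P\le y}(1-|P|^{-1})^{-1}\bigr)^{2}\bigl(\prod_{\deg P\le y}\frac{1-|P|^{-1}}{1-|P|^{-\sigma}}\bigr)^{2}$. The first factor is $\ll_{q}y^{2}\le z^{2}$ by the polynomial Mertens estimate, an easy consequence of the prime polynomial theorem $\pi(n)=q^{n}/n+O(q^{n/2}/n)$. For the second factor I would bound its logarithm by $\sum_{\deg P\le y}\frac{|P|^{-1}(q^{(1-\sigma)\deg P}-1)}{1-|P|^{-\sigma}}$ (using $|P|^{-\sigma}-|P|^{-1}=|P|^{-1}(q^{(1-\sigma)\deg P}-1)$) and split the primes at degree $\lceil(1-\sigma)^{-1}\rceil$: below that degree $q^{(1-\sigma)\deg P}-1\ll_{q}(1-\sigma)\deg P$ and the contribution is $O_{q}(1)$; above it, using $q^{(1-\sigma)\deg P}\le r$, the bound $1/\deg P\le 1-\sigma$, and $\pi(n)\le q^{n}/n$, the contribution is $\ll_{q}(1-\sigma)\sum_{n\le y}q^{(1-\sigma)n}\ll_{q}r$. (In both estimates I use $\sigma\ge\tfrac12$, which holds as long as $r\log_{q}r\le z/2$, to keep the factors $(1-|P|^{-\sigma})^{-1}$ bounded.) Hence the Euler product is $\le z^{2}\exp(O_{q}(r))$, and the left side of the lemma is $\ll_{q}z^{2}\,r^{-r/2}\exp(O_{q}(r))$.

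To conclude I would split on the size of $r$. If $r$ is below a constant $R_{0}=R_{0}(q)$ the bound is trivial, since the whole sum is at most $\prod_{\deg P\le z/r}(1-|P|^{-1})^{-2}\ll_{q}z^{2}\ll_{q}z^{2}r^{-r/10}$ (as $r^{r/10}\le R_{0}^{R_{0}/10}$ there). If $r\ge R_{0}$, choosing $R_{0}$ large enough that $\exp(O_{q}(r))\le r^{2r/5}$ for all $r\ge R_{0}$ — possible because $O_{q}(r)=o(r\log r)$ — gives $r^{-r/2}\exp(O_{q}(r))\le r^{-r/10}$, which is exactly the claim. I expect the main obstacle to be the Euler product estimate in the regime where $\sigma$ is not bounded away from $0$; by the hypothesis $r\le z/\log_{q}z$ this occurs only when $r$ is within a constant factor of its maximum $z/\log_{q}z$. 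There the $O_{q}(1)$ primes of small degree each contribute a factor $(1-|P|^{-\sigma})^{-1}$ as large as a power of $\log z$, and the cure is to invoke the quantitative lower bound $\sigma\gg\log_{q}\log_{q}z/\log_{q}z$ that the hypothesis supplies: it both caps the number of such primes at $z^{o(1)}$ and keeps their combined contribution to the product at $z^{o(1)}$, which is harmless because in that range $r^{-r/2}$ is already of size $e^{-cz}$.
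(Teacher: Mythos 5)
Your plan is the same as the paper's: Rankin's trick, bounding the sum by $q^{-(1-\sigma)z/2}$ times an Euler product over primes of degree at most $z/r$, and then over\-killing the target $q^{-r\log_q r/10}$. The difference is your choice of exponent. You take $\sigma=1-\frac{r\log_q r}{z}$, which yields the maximal prefactor $r^{-r/2}$ but then forces you to worry about whether $\sigma$ is bounded away from $0$; the hypothesis $r\le z/\log_q z$ guarantees only $\sigma\ge\frac{\log_q\log_q z}{\log_q z}\to 0$. You correctly flag that your main Euler-product estimate used $\sigma\ge\frac12$, identify the bad range ($r$ within a constant factor of $z/\log_q z$), and sketch a fix via the quantitative lower bound on $\sigma$. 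That sketch is sound in spirit — indeed in that range $r^{-r/2}\le q^{-z/4}$ already beats the target $q^{-r\log_q r/10}\ge q^{-z/10}$ with $q^{-3z/20}$ to spare, which easily absorbs the $\exp\bigl(z^{o(1)}\bigr)$ blow-up of the Euler product — but you have not actually carried it out, and the constants in that range are fiddlier than the phrase ``$O_q(1)$ primes of small degree'' suggests (primes up to degree roughly $1/\sigma$, not a bounded number, can contribute factors $\gg1$).

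The paper sidesteps the entire case analysis by a simple device: it takes $\delta=1-\frac{r\log_q r}{4z}$, with an extra factor of $4$ in the denominator. Since $r\le z/\log_q z$ forces $r\log_q r<z$, this automatically gives $\frac34<\delta\le1$ across the whole admissible range of $r$, so the Euler product $\prod_{\deg P\le z/r}(1-|P|^{-\delta})^{-2}$ is uniformly under control; the paper then bounds its logarithm by Taylor-expanding $|P|^{1-\delta}-1$ and using $\sum_{\deg P\le y}\frac{\deg P}{|P|}\le 2y$, arriving at $z^2 q^{-\frac{1}{8}r\log_q r}e^{4r^{1/4}}$, which still comfortably beats $z^2q^{-\frac{1}{10}r\log_q r}$. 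In short: your approach would work if you completed the tail case, but the cleaner move — which you may want to adopt — is to weaken the Rankin exponent just enough (here by a factor of $4$) that it never leaves a fixed compact interval in $(0,1)$, so no case split is needed.
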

\begin{proof}
Again we use Rankin's method. Let $\frac{3}{4}\leq\delta\leq1$. We
have 

\begin{eqnarray*}
\underset{p_{+}\left(N\right)\leq y}{\underset{\deg N\geq x}{\sum}}\frac{d\left(N\right)}{\left|N\right|} & \leq & q^{\left(\delta-1\right)x}\underset{p_{+}\left(N\right)\leq y}{\underset{\deg N\geq x}{\sum}}\frac{d\left(N\right)}{\left|N\right|^{\delta}}\leq q^{\left(\delta-1\right)x}\underset{p_{+}\left(N\right)\leq y}{\underset{N\:\mbox{monic}}{\sum}}\frac{d\left(N\right)}{\left|N\right|^{\delta}}\\
 & = & q^{\left(\delta-1\right)x}\prod_{\deg P\leq y}\left(\sum_{l=0}^{\infty}\frac{d\left(P^{l}\right)}{\left|P\right|^{\delta l}}\right)\ll q^{\left(\delta-1\right)x}\exp\left(2\sum_{\deg P\leq y}\frac{1}{\left|P\right|^{\delta}}\right).
\end{eqnarray*}
Now, $\frac{1}{\left|P\right|^{\delta}}=\frac{1}{\left|P\right|}+\frac{1}{\left|P\right|}(\left|P\right|^{1-\delta}-1)$.
By the Taylor series for the exponential function 

\begin{eqnarray*}
\sum_{\deg P\leq y}\frac{1}{\left|P\right|}\left(\left|P\right|^{1-\delta}-1\right) & \leq & \sum_{\deg P\leq y}\frac{1}{\left|P\right|}\sum_{n=1}^{\infty}\frac{\left(\left(1-\delta\right)\ln\left|P\right|\right)^{n}}{n!}\\
 & \leq & \sum_{n=1}^{\infty}\frac{\left(1-\delta\right)^{n}y^{n-1}\ln^{n}q}{n!}\sum_{\deg P\leq y}\frac{\deg P}{\left|P\right|}\\
 & \leq & 2\sum_{n=1}^{\infty}\frac{\left(\left(1-\delta\right)y\ln q\right)^{n}}{n!}\\
 & \leq & 2\exp\left(\left(1-\delta\right)y\ln q\right)=2q^{\left(1-\delta\right)y}
\end{eqnarray*}
when the third inequality is due to PNT, since $n\pi\left(n\right)\leq q^{n}$
we get
\[
\sum_{\deg P\leq y}\frac{\deg P}{\left|P\right|}=\sum_{n\leq y}\frac{n}{q^{n}}\pi\left(n\right)\leq\sum_{n\leq y}1=y+1\leq2y.
\]
Therefore, we have 

\begin{eqnarray*}
\underset{p_{+}\left(N\right)\leq y}{\underset{\deg N\geq x}{\sum}}\frac{d(N)}{\left|N\right|} & \ll & q^{\left(\delta-1\right)x}\exp\left(2\sum_{\deg P\leq y}\frac{1}{\left|P\right|}+2q^{\left(1-\delta\right)y}\right)\\
 & \ll & q^{\left(\delta-1\right)x}\exp\left(2\log y+4q^{\left(1-\delta\right)y}\right).
\end{eqnarray*}
Now put $x=\frac{z}{2},\: y=\frac{z}{r}$ and set $\delta=1-\frac{r\log_{q}r}{4z}$.
Note that if $1\leq r\leq\frac{z}{\log z}$ then $r\log_{q}r<z$ and
so $\frac{3}{4}<\delta\leq1$.

\[
\underset{p_{+}\left(N\right)\leq\frac{z}{r}}{\underset{\deg N\geq\frac{z}{2}}{\sum}}\frac{d\left(N\right)}{\left|N\right|}\ll q^{-\frac{r\log_{q}r}{4z}\frac{z}{2}}z^{2}\exp\left(4q^{\frac{r\log_{q}r}{4z}\frac{z}{r}}\right)=z^{2}q^{-\frac{1}{8}r\log_{q}r}e^{4r^{\frac{1}{4}}}
\]
and the result follows.
\end{proof}

\subsection{Proof of Theorem \ref{thm 4th}}

Let $K$ as required above, and put $z=\frac{\alpha}{10}x$.

For each $N$ satisfying $\deg N\leq x,\: N\equiv A\pmod K$ we express
$N$ in the form 

\[
N=P_{1}^{s_{1}}\cdots P_{j}^{s_{j}}P_{j+1}^{s_{j+1}}\cdots P_{l}^{s_{l}}=B_{N}D_{N},\qquad\left(\deg P_{1}\leq\deg P_{2}\leq\cdots\leq\deg P_{l}\right)
\]
where $B_{n}$ is chosen so that 

\[
\deg B_{N}\leq z<\deg B_{N}P_{j+1}^{s_{j+1}}.
\]
We divide the set of such polynomials into the following classes:

\begin{eqnarray*}
\mathrm{I} &  & p_{-}\left(D_{N}\right)>\frac{z}{2}\\
\mathrm{II} &  & p_{-}\left(D_{N}\right)\leq\frac{z}{2},\;\deg B_{N}\leq\frac{z}{2}\\
\mathrm{III} &  & p_{-}(\left(D_{N}\right)\leq\log_{q}z+\log_{q}\log_{q}z,\;\deg B_{N}>\frac{z}{2}\\
\mathrm{IV} &  & \log_{q}z+\log_{q}\log_{q}z<p_{-}\left(D_{N}\right)\leq\frac{z}{2},\;\deg B_{N}>\frac{z}{2}.
\end{eqnarray*}

First we have 

\begin{eqnarray*}
\sum_{N\in\mathrm{I}}d\left(N\right) & = & \sum_{N\in\mathrm{I}}d\left(B_{N}\right)d\left(D_{N}\right)\leq\sum_{\deg B\leq z}d\left(B\right)\underset{p_{-}\left(\frac{N}{B}\right)>\frac{z}{2}}{\underset{N\equiv0\pmod B}{\underset{N=A\pmod K}{\underset{\deg N\leq x}{\sum}}}}d\left(\frac{N}{B}\right)\\
 & = & \sum_{\deg B\leq z}d\left(B\right)\underset{p_{-}\left(D\right)>\frac{z}{2}}{\underset{D\equiv A'\pmod K}{\underset{\deg D\leq x-\deg B}{\sum}}}d\left(D\right)
\end{eqnarray*}
where $A'\equiv A\bar{B},\; B\bar{B}\equiv1\pmod K$. By definition
we know $p_{-}\left(D\right)>\frac{z}{2}=\frac{\alpha}{20}x$, so
that $\Omega\left(D\right)\leq\frac{x}{p_{-}\left(D\right)}<\frac{20}{\alpha}$,
therefore, $d\left(D\right)=2^{\Omega\left(D\right)}\leq2^{\frac{20}{\alpha}}$.
Hence, we have

\[
\sum_{N\in I}d\left(N\right)\ll\underset{\left(B,K\right)=1}{\underset{\deg B\leq z}{\sum}}d\left(B\right)\underset{p_{-}\left(D\right)>\frac{z}{2}}{\underset{D\equiv A'\pmod K}{\underset{\deg D\leq x-\deg B}{\sum}}}1.
\]
Since $\left|K\right|<q^{\left(1-\alpha\right)x}$ and $\left|B\right|\leq q^{z}<q^{\alpha x}$,
so that $\left|KB\right|<x$, it follows from Lemma \ref{lem: phi}
that $\Phi\left(x-\deg B,\frac{z}{2};K,A'\right)\leq\frac{2q^{x}}{\phi\left(K\right)\left|B\right|z}+q^{z}$,
and accordingly

\begin{eqnarray*}
\sum_{N\in\mathrm{I}}d\left(N\right) & \ll & \left(\frac{2q^{x}}{\phi\left(K\right)z}+q^{2z}\right)\underset{\left(B,K\right)=1}{\underset{\deg B\leq z}{\sum}}\frac{d\left(B\right)}{\left|B\right|}.
\end{eqnarray*}
From Lemma \ref{lem: divisor sum} we arrive at

\begin{equation}
\sum_{N\in\mathrm{I}}d\left(N\right)\ll\left(\frac{2q^{x}}{\phi\left(K\right)z}+q^{2z}\right)z^{2}.\label{eq:div I}
\end{equation}

Next, to each $N\in\mathrm{II}$, there are corresponding $P$ and
$s$ such that $P^{s}\parallel N,$\linebreak{}
 $\deg P\leq\frac{z}{2}$ and $\deg P^{s}>\frac{z}{2}$ (for example,
$P_{j+1}$ will hold). Let $s_{P}$ denote the least positive integer
$s$ satisfying $\deg P^{s}>\frac{z}{2}$ so that $s_{P}\geq2$ and
hence\linebreak{}
 $\left|P\right|^{-s_{P}}\leq\min\left(q^{-\frac{z}{2}},\left|P\right|^{-2}\right)$;
Thus 
\[
\sum_{\deg P\leq\frac{z}{2}}\frac{1}{\left|P\right|^{s_{P}}}\leq\sum_{\deg P\leq\frac{z}{4}}q^{-\frac{z}{2}}+\sum_{\deg P>\frac{z}{4}}\left|P\right|^{-2}\ll q^{-\frac{z}{4}}.
\]
It now follows that 

\[
\sum_{N\in\mathrm{II}}1\leq\sum_{\deg P\leq\frac{z}{2}}\underset{N\equiv0\pmod P^{s_{P}}}{\underset{N\equiv A\pmod K}{\underset{\deg N\leq x}{\sum}}}1=\underset{P\nmid K}{\underset{\deg P\leq\frac{z}{2}}{\sum}}\left(\frac{q^{x}}{\left|K\right|\left|P^{s_{P}}\right|}+O\left(1\right)\right)\ll\frac{q^{x}}{\left|K\right|}q^{-\frac{z}{4}}+q^{\frac{z}{2}}.
\]

Suppose next that $N\in\mathrm{III}$. $B$ exists such that $B\mid N,\;\frac{z}{2}<\deg B\leq z$,
and $p_{+}(B)<\log_{q}z+\log_{q}\log_{q}z$ (for example, $B_{N}$).
Consequently we have

\begin{eqnarray*}
\sum_{N\in\mathrm{III}}1 & \leq & \underset{p_{+}\left(B\right)<\log_{q}x+\log_{q}\log_{q}x}{\underset{\frac{z}{2}<\deg B\leq z}{\sum}}\underset{N\equiv0\pmod B}{\underset{N\equiv A\pmod K}{\underset{\deg N\leq x}{\sum}}}1\\
 & = & \underset{\left(B,K\right)=1}{\underset{p_{+}\left(B\right)<\log_{q}z+\log_{q}\log_{q}z}{\underset{\frac{z}{2}<\deg B\leq z}{\sum}}}\left(\frac{q^{x}}{\left|K\right|\left|B\right|}+O\left(1\right)\right)\\
 & \leq & \frac{q^{x}}{\left|K\right|}q^{-\frac{z}{2}}\Psi\left(z,\log_{q}z+\log_{q}\log_{q}z\right)+O\left(q^{z}\right)\\
 & \ll & \frac{q^{x}}{\left|K\right|}q^{-\frac{z}{4}}+q^{z}
\end{eqnarray*}
by Lemma \ref{lem: psi}. Since $\deg K<(1-\alpha)x$ we have, by
the definition of z, 
\[
q^{z}<q^{\alpha x}q^{-\frac{z}{4}}<\frac{q^{x}}{\left|K\right|}q^{-\frac{z}{4}}.
\]
 And so, out of the last two inequality

\[
\sum_{N\in\mathrm{II}}1+\sum_{N\in\mathrm{III}}1\ll\frac{q^{x}}{\left|K\right|}q^{-\frac{z}{4}}.
\]
We have (by the properties of the divisor function and the definition
of z) \linebreak{}
$d\left(N\right)\ll\left|N\right|^{\frac{\alpha}{80}}\leq q^{\frac{\alpha}{80}x}=q^{\frac{z}{8}}$.
Consequently, 

\begin{equation}
\sum_{N\in\mathrm{II}}d\left(N\right)+\sum_{N\in\mathrm{III}}d\left(N\right)\ll\frac{q^{x}}{\left|K\right|}q^{-\frac{z}{8}}.\label{eq:div II+III}
\end{equation}

Lastly we address the class $\mathrm{IV}$. We have 

\[
\sum_{N\in\mathrm{IV}}d\left(N\right)=\sum_{N\in\mathrm{IV}}d\left(B_{N}\right)d\left(D_{N}\right)\leq\sum_{\frac{z}{2}<\deg B\leq z}d\left(B\right)\underset{\log_{q}z+\log_{q}\log_{q}z<p_{-}(D_{N})\leq\frac{z}{2}}{\underset{B_{N}=B,\: p_{-}(D_{N})>p_{+}(B)}{\underset{N\equiv A\pmod K}{\underset{\deg N\leq x}{\sum}}}}d\left(D_{N}\right).
\]
Let us denote $r_{0}:=\left[\frac{z}{\log_{q}z+\log_{q}\log_{q}z}\right]$,
so that $\log_{q}z+\log_{q}\log_{q}z>\frac{z}{r_{0}+1}$. Let $2<r<r_{0}$
\linebreak{}
 and consider these $N$ for which $\frac{z}{r+1}<p_{-}\left(D_{N}\right)\leq\frac{z}{r}$.
For such $N$, we have $p_{+}\left(B_{N}\right)=p_{+}\left(B\right)<p_{-}\left(D_{N}\right)<\frac{z}{r}$
and moreover, as before,

\[
\Omega\left(D_{N}\right)\leq\frac{x}{p_{-}\left(D_{N}\right)}\leq\frac{\left(r+1\right)x}{z}=\frac{10\left(r+1\right)}{\alpha}<\frac{20r}{\alpha}
\]
so that $d\left(D_{N}\right)\leq2^{\Omega\left(D_{N}\right)}\leq\gamma^{r}$
where $\gamma=2^{\frac{20}{\alpha}}$. It follows that 

\begin{eqnarray*}
\sum_{N\in\mathrm{IV}}d\left(N\right) & \leq & \underset{2\leq r\leq r_{0}}{\sum}\gamma^{r}\sum_{\begin{array}{c}
\frac{z}{2}<\deg B\leq z\\
p_{+}\left(B\right)<\frac{z}{r}
\end{array}}d(B)\underset{\frac{z}{r+1}<p_{-}\left(\frac{N}{B}\right)<\frac{z}{r}}{\underset{N\equiv0\pmod B}{\underset{N\equiv A\pmod K}{\underset{\deg N\leq x}{\sum}}}}1\\
 & \leq & \underset{2\leq r\leq r_{0}}{\sum}\gamma^{r}\underset{\left(B,K\right)=1}{\underset{p_{+}\left(B\right)<\frac{z}{r}}{\underset{\frac{z}{2}<\deg B\leq z}{\sum}}}d\left(B\right)\Phi\left(x-\deg B,\frac{z}{r+1};K,A'\right)
\end{eqnarray*}
where $A'\equiv A\bar{B},\; B\bar{B}\equiv1\pmod K$. By Lemma \ref{lem: phi}
we have 

\begin{eqnarray*}
\Phi\left(x-\deg B,\frac{z}{r+1};K,A'\right) & \leq & \frac{q^{x}\left(r+1\right)}{\phi\left(K\right)\left|B\right|z}+q^{\frac{2z}{r+1}}
\end{eqnarray*}
and therefore 

\[
\sum_{N\in\mathrm{IV}}d\left(N\right)\leq\left(\frac{q^{x}}{\phi\left(K\right)z}+q^{2z}\right)\sum_{2\leq r\leq r_{0}}\left(r+1\right)\gamma^{r}\underset{(B,K)=1}{\underset{p_{+}(B)<\frac{z}{r}}{\underset{\frac{z}{2}<\deg B\leq z}{\sum}}}\frac{d\left(B\right)}{\left|B\right|}.
\]
By the definition of $r_{0}$ we see that we can apply Lemma \ref{lem: div sum bound}
to the inner sum, giving

\begin{equation}
\begin{aligned}\sum_{N\in\mathrm{IV}}d\left(N\right) & \ll\left(\frac{q^{x}}{\phi\left(K\right)z}+q^{2z}\right)z^{2}\sum_{2\leq r\leq r_{0}}\left(r+1\right)\gamma^{r}q^{-\frac{1}{10}r\log_{q}r}\\
 & \ll\frac{q^{x}z}{\phi\left(K\right)}+z^{2}q^{2z}.
\end{aligned}
\label{eq:div IV}
\end{equation}

For $\left|K\right|<x^{1-\alpha}$ we have, by the definition of $z$, 

\[
q^{2z}<\frac{\left|K\right|}{\phi\left(K\right)}\frac{q^{3z}}{z}<\frac{q^{\left(1-\alpha\right)x}q^{\frac{3\alpha}{10}x}}{\phi\left(K\right)z}<\frac{q^{x}}{\phi\left(K\right)z}.
\]
In respect to \ref{eq:div I}, \ref{eq:div II+III} and \ref{eq:div IV}
we receive the desired result.

\section{\label{sec:Expressing L as a short sum}Expressing $L\left(s,\chi\right)$
As a Short Sum}

One of our main goal is to find an asymptotic formula to the fourth
moment of the Dirichlet L-function over the polynomial ring. In this
section we reduce this sum into a polynomial of degree $\mathfrak{D}$
to simplify the fourth moment problem. We have

\[
\left|L^{*}\left(u,\chi\right)\right|^{2}=\sum_{\deg N,\deg M\leq\mathfrak{D}}\chi\left(N\right)\bar{\chi}\left(M\right)u^{\deg N+\deg M}=\sum_{n=0}^{2\mathfrak{D}}u^{n}A_{n}
\]
when $A_{n}:=\underset{\deg NM=n}{\sum}\chi\left(N\right)\bar{\chi}\left(M\right)$.
One can consider this reduction as the polynomial analogue of the
approximate functional equation for the Riemann zeta function ,%
\cite{key-12}. Also, there is a similar analogue for the quadratic
Dirichlet L\textendash{}function, \cite{key-10}. Note, that in our
case the formula is an identity rather than an approximation.
\begin{prop}
\label{thm:squared L}For $\mathfrak{D}\geq1$, $\chi$ primitive
character modulo $Q$, we have 

\[
\left|L\left(\frac{1}{2},\chi\right)\right|^{2}=2\sum_{\deg NM<\mathfrak{D}}\frac{\chi\left(N\right)\bar{\chi}\left(M\right)}{\left|NM\right|^{\nicefrac{1}{2}}}+\pi\left(\chi\right)
\]

when

\[
\pi\left(\chi\right):=\begin{cases}
\begin{array}{l}
2q^{-\nicefrac{\mathfrak{D}}{2}}\left(A_{\mathfrak{D}-3}-A_{\mathfrak{D}-2}+A_{\mathfrak{D}-1}\right)\\
\qquad-\left(q^{-\nicefrac{\left(\mathfrak{D}-1\right)}{2}}-q^{-\nicefrac{\left(\mathfrak{D}+1\right)}{2}}\right)A_{\mathfrak{D}-2};
\end{array} & \chi\mbox{ "even"}\\
q^{-\nicefrac{\mathfrak{D}}{2}}A_{\mathfrak{D}}; & \mbox{else.}
\end{cases}
\]
\end{prop}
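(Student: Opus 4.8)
The plan is to start from the exact identity $\left|L\left(\frac{1}{2},\chi\right)\right|^{2}=\mathcal{L}\left(q^{-\nicefrac{1}{2}}\right)$, where $\mathcal{L}(u):=L^{*}(u,\chi)L^{*}(u,\bar{\chi})=\sum_{n=0}^{2\mathfrak{D}}A_{n}u^{n}$; the identity holds because $q^{-\nicefrac{1}{2}}$ is real and $\overline{L^{*}(q^{-\nicefrac{1}{2}},\chi)}=L^{*}(q^{-\nicefrac{1}{2}},\bar{\chi})$. Since the claimed main term is precisely $2\sum_{n=0}^{\mathfrak{D}-1}A_{n}q^{-\nicefrac{n}{2}}=2\sum_{\deg NM<\mathfrak{D}}\chi(N)\bar{\chi}(M)\left|NM\right|^{-\nicefrac{1}{2}}$, I want to fold the sum $\sum_{n=0}^{2\mathfrak{D}}A_{n}q^{-\nicefrac{n}{2}}$ about its centre, so that the functional equation matches the tail $n\ge\mathfrak{D}$ against the head $n\le\mathfrak{D}-1$ up to a few boundary terms, which will constitute $\pi(\chi)$. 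As a preliminary I would record that $\epsilon(\chi)\epsilon(\bar{\chi})=1$, which drops out of iterating the functional equation for $\Lambda(u,\chi)$ and using $\lambda_{\bar{\chi}}=\lambda_{\chi}$, and then multiply the functional equations for $\chi$ and $\bar{\chi}$ to obtain a functional equation for the completed square.

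When $\chi$ is not even we have $\lambda_{\chi}=0$, so $\Lambda(u,\chi)=L^{*}(u,\chi)$ and the product of the two functional equations reads $\mathcal{L}(u)=q^{\mathfrak{D}}u^{2\mathfrak{D}}\mathcal{L}(1/(qu))$, i.e. $A_{n}=q^{n-\mathfrak{D}}A_{2\mathfrak{D}-n}$. Substituting this and pairing $n$ with $2\mathfrak{D}-n$ gives at once $\left|L\left(\frac{1}{2},\chi\right)\right|^{2}=2\sum_{n=0}^{\mathfrak{D}-1}A_{n}q^{-\nicefrac{n}{2}}+q^{-\nicefrac{\mathfrak{D}}{2}}A_{\mathfrak{D}}$, which is the assertion with $\pi(\chi)=q^{-\nicefrac{\mathfrak{D}}{2}}A_{\mathfrak{D}}$.

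When $\chi$ is even, $\lambda_{\chi}=1$ and $L^{*}(u,\chi)=(1-u)\Lambda(u,\chi)$, so the object that folds cleanly is not $\mathcal{L}$ but $M(u):=\Lambda(u,\chi)\Lambda(u,\bar{\chi})=(1-u)^{-2}\mathcal{L}(u)$, a polynomial of degree $2\mathfrak{D}-2$ for which the product of functional equations gives $M(u)=q^{\mathfrak{D}-1}u^{2\mathfrak{D}-2}M(1/(qu))$; writing $M(u)=\sum_{n}B_{n}u^{n}$, this says $B_{n}=q^{n-\mathfrak{D}+1}B_{2\mathfrak{D}-2-n}$. Because $\left|L\left(\frac{1}{2},\chi\right)\right|^{2}=\left(1-q^{-\nicefrac{1}{2}}\right)^{2}M(q^{-\nicefrac{1}{2}})$, I would fold $M(q^{-\nicefrac{1}{2}})=\sum_{n=0}^{2\mathfrak{D}-2}B_{n}q^{-\nicefrac{n}{2}}$ about its centre $n=\mathfrak{D}-1$ to obtain $M(q^{-\nicefrac{1}{2}})=2\sum_{n=0}^{\mathfrak{D}-2}B_{n}q^{-\nicefrac{n}{2}}+B_{\mathfrak{D}-1}q^{-\nicefrac{\left(\mathfrak{D}-1\right)}{2}}$, then multiply out $\left(1-q^{-\nicefrac{1}{2}}\right)^{2}$ and convert back to the $A_{n}$ via $A_{n}=B_{n}-2B_{n-1}+B_{n-2}$. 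The bulk reassembles as $2\sum_{n=0}^{\mathfrak{D}-1}A_{n}q^{-\nicefrac{n}{2}}$, and the residual terms, supported at indices near $\mathfrak{D}$, should then be simplified using the symmetry relation $B_{\mathfrak{D}}=qB_{\mathfrak{D}-2}$ together with the linear relations the functional equation imposes among the $A_{n}$ with $n$ close to $\mathfrak{D}$, so as to land on the stated form of $\pi(\chi)$ in terms of $A_{\mathfrak{D}-3}$, $A_{\mathfrak{D}-2}$, $A_{\mathfrak{D}-1}$.

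The two foldings are mechanical. The main obstacle is this last step of the even case: tracking exactly which boundary terms survive once the factor $\left(1-q^{-\nicefrac{1}{2}}\right)^{\pm2}$ has been pushed past the truncated sums, and re-expressing the surviving coefficients near the centre of $M$ through $A_{\mathfrak{D}-3},A_{\mathfrak{D}-2},A_{\mathfrak{D}-1}$ to match the precise shape of $\pi(\chi)$ in the statement.
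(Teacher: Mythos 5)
Your proposal follows essentially the same route as the paper: write the square as a degree-$2\mathfrak{D}$ polynomial at $u=q^{-1/2}$, multiply the functional equations for $\chi$ and $\bar\chi$ to get the reflection symmetry of the coefficients, fold the sum about its centre, and, in the even case, work with the completed product $M(u)=\Lambda(u,\chi)\Lambda(u,\bar\chi)$ and translate between its coefficients $B_n$ and the $A_n$ via $A_n=B_n-2B_{n-1}+B_{n-2}$ after expanding $(1-q^{-1/2})^2$. The not-even case in your write-up is complete and identical to the paper's; for the even case you correctly identify the folding, the factor juggling, and the $B/A$ conversion as the remaining bookkeeping — which is exactly the calculation the paper carries out.
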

\begin{proof}
We divide to two cases:

First, if $\chi$ is ``even'' then the functional equation gives

\[
\left|\Lambda\left(u,\chi\right)\right|^{2}=\left(q^{\nicefrac{1}{2}}u\right)^{2\left(\mathfrak{D}-1\right)}\left|\Lambda\left(\frac{1}{qu},\bar{\chi}\right)\right|^{2}.
\]
Also $\left|\Lambda\left(u,\chi\right)\right|^{2}$ is a polynomial
in $u$ of degree $\mathfrak{D}-1$, and we can write \linebreak{}
$\left|\Lambda\left(u,\chi\right)\right|^{2}=\sum_{n=0}^{2\left(\mathfrak{D}-1\right)}u^{n}B_{n}$
(note that if $u$ is real, by properties of complex functions $\left|\Lambda\left(u,\bar{\chi}\right)\right|^{2}=\left|\Lambda\left(u,\chi\right)\right|^{2}$,
it can also be represented this way). Now, by the functional equation

\begin{eqnarray*}
\sum_{m=0}^{2\left(\mathfrak{D}-1\right)}u^{m}B_{m} & = & \left(q^{\nicefrac{1}{2}}u\right)^{2\left(\mathfrak{D}-1\right)}\sum_{n=0}^{2\left(\mathfrak{D}-1\right)}\left(\frac{1}{qu}\right)^{n}B_{n}=\sum_{n=0}^{2\left(\mathfrak{D}-1\right)}q^{\mathfrak{D}-1-n}u^{2\left(\mathfrak{D}-1\right)-n}B_{n}
\end{eqnarray*}
and by writing $m=2\left(\mathfrak{D}-1\right)-n$, we get

\[
=\sum_{m=0}^{2\left(\mathfrak{D}-1\right)}u^{m}q^{m-\mathfrak{D}+1}B_{2\left(\mathfrak{D}-1\right)-m}.
\]
We can conclude 

\begin{equation}
B_{m}=q^{m-\mathfrak{D}+1}B_{2\left(\mathfrak{D}-1\right)-m}\;\Rightarrow\; B_{2\left(\mathfrak{D}-1\right)-m}=q^{\mathfrak{D}-1-m}B_{m}.\label{eq:B_m}
\end{equation}
Now, 

\begin{eqnarray*}
\left|L\left(\frac{1}{2},\chi\right)\right|^{2} & = & \left(1-q^{-\nicefrac{1}{2}}\right)^{2}\sum_{n=0}^{2\left(\mathfrak{D}-1\right)}q^{-\nicefrac{1}{2}n}B_{n}\\
 & = & \left(1-q^{-\nicefrac{1}{2}}\right)^{2}\left[\sum_{n=0}^{\mathfrak{D}-1}q^{-\nicefrac{n}{2}}B_{n}+\sum_{n=\mathfrak{D}}^{2\left(\mathfrak{D}-1\right)}q^{-\nicefrac{n}{2}}B_{n}\right],
\end{eqnarray*}
as shown earlier, $m=2\left(\mathfrak{D}-1\right)-n$ 

\[
=\left(1-q^{-\nicefrac{1}{2}}\right)^{2}\left[\sum_{n=0}^{\mathfrak{D}-1}q^{-\nicefrac{n}{2}}B_{n}+\sum_{m=0}^{\mathfrak{D}-2}q^{-\nicefrac{1}{2}\left[2\left(\mathfrak{D}-1\right)-m\right]}B_{2\left(\mathfrak{D}-1\right)-m}\right],
\]
by equation \ref{eq:B_m} we get

\begin{eqnarray*}
 & = & \left(1-q^{-\nicefrac{1}{2}}\right)^{2}\left[\sum_{n=0}^{\mathfrak{D}-1}q^{-\nicefrac{n}{2}}B_{n}+\sum_{m=0}^{\mathfrak{D}-2}q^{-\nicefrac{m}{2}}B_{m}\right]\\
 & = & \left(1-q^{-\nicefrac{1}{2}}\right)^{2}\left[2\sum_{n=0}^{\mathfrak{D}-2}q^{-\nicefrac{n}{2}}B_{n}+q^{-\nicefrac{\left(\mathfrak{D}-1\right)}{2}}B_{\mathfrak{D}-1}\right]\\
 & = & 2\left[\sum_{n=0}^{\mathfrak{D}-2}q^{-\nicefrac{n}{2}}B_{n}-2\sum_{n=1}^{\mathfrak{D}-1}q^{-\nicefrac{n}{2}}B_{n-1}+\sum_{n=2}^{\mathfrak{D}}q^{-\nicefrac{n}{2}}B_{n-2}\right]\\
 &  & +B_{\mathfrak{D}-1}\left(q^{-\nicefrac{\left(\mathfrak{D}-1\right)}{2}}-2q^{-\nicefrac{\mathfrak{D}}{2}}+q^{-\nicefrac{\left(\mathfrak{D}+1\right)}{2}}\right)\\
 & = & 2[B_{0}+q^{-\nicefrac{1}{2}}\left(B_{1}-2B_{0}\right)+\sum_{n=2}^{\mathfrak{D}-2}q^{-\nicefrac{1}{2}n}\left(B_{n}-2B_{n-1}+B_{n-2}\right)\\
 &  & +q^{-\nicefrac{\left(\mathfrak{D}-1\right)}{2}}\left(-2B_{\mathfrak{D}-2}+B_{\mathfrak{D}-3}\right)+q^{-\nicefrac{\mathfrak{D}}{2}}B_{\mathfrak{D}-2}]\\
 &  & +B_{\mathfrak{D}-1}\left(q^{-\nicefrac{(\mathfrak{D}-1)}{2}}-2q^{-\nicefrac{\mathfrak{D}}{2}}+q^{-\nicefrac{(\mathfrak{D}+1)}{2}}\right).
\end{eqnarray*}
Since $\left|L\left(\frac{1}{2},\chi\right)\right|^{2}=\left(1-q^{-\nicefrac{1}{2}}\right)^{2}\left|\Lambda\left(q^{-\nicefrac{1}{2}},\chi\right)\right|^{2}$, 

\begin{eqnarray*}
\sum_{n=0}^{2\mathfrak{D}}q^{-\nicefrac{n}{2}}A_{n} & = & \left(1-q^{-\nicefrac{1}{2}}\right)^{2}\sum_{n=0}^{2\left(\mathfrak{D}-1\right)}q^{-\nicefrac{n}{2}}B_{n}\\
 & = & B_{0}+q^{-\nicefrac{1}{2}}\left(B_{1}-2B_{0}\right)+\sum_{n=2}^{2\left(\mathfrak{D}-1\right)}q^{-\nicefrac{n}{2}}\left(B_{n}-2B_{n-1}+B_{n-2}\right)\\
 &  & +q^{-\nicefrac{(2\mathfrak{D}-1)}{2}}\left(-2B_{2\mathfrak{D}-2}+B_{2\mathfrak{D}-3}\right)+q^{-\mathfrak{D}}B_{2\mathfrak{D}-2}.
\end{eqnarray*}
We conclude

\[
A_{n}=\begin{cases}
B_{0}, & n=0\\
B_{1}-2B_{0}, & n=1\\
B_{n}-2B_{n-1}+B_{n-2}, & 2\leq n\leq2\mathfrak{D}-2\\
-2B_{2\mathfrak{D}-2}+B_{2\mathfrak{D}-3}, & n=2\mathfrak{D}-1\\
B_{2\mathfrak{D}-2}, & n=2\mathfrak{D}
\end{cases}
\]
and

\[
B_{n}=\begin{cases}
A_{0}, & n=0\\
A_{1}+2A_{0}, & n=1\\
A_{n-1}+\overset{n}{\underset{k=1}{\sum}}A_{k}, & 2\leq n\leq2\mathfrak{D}-2\\
A_{2\mathfrak{D}-1}+2A_{2\mathfrak{D}}, & n=2\mathfrak{D}-1\\
A_{2\mathfrak{D}}, & n=2\mathfrak{D}-2
\end{cases}
\]
which yields

\begin{eqnarray*}
\left|L\left(\frac{1}{2},\chi\right)\right|^{2} & = & 2[A_{0}+q^{-\nicefrac{1}{2}}A_{1}+\sum_{n=2}^{\mathfrak{D}-2}q^{-\nicefrac{n}{2}}A_{n}+q^{-\nicefrac{(\mathfrak{D}-1)}{2}}\left(A_{\mathfrak{D}-1}-B_{\mathfrak{D}-1}\right)\\
 &  & +q^{-\nicefrac{\mathfrak{D}}{2}}B_{\mathfrak{D}-2}]+\left(q^{-\nicefrac{\left(\mathfrak{D}-1\right)}{2}}-2q^{-\nicefrac{\mathfrak{D}}{2}}+q^{-\nicefrac{\left(\mathfrak{D}+1\right)}{2}}\right)B_{\mathfrak{D}-1}\\
 & = & 2\sum_{n=0}^{\mathfrak{D}-1}q^{-\nicefrac{n}{2}}A_{n}+2q^{-\nicefrac{\mathfrak{D}}{2}}B_{\mathfrak{D}-2}-(q^{-\nicefrac{(\mathfrak{D}-1)}{2}}+2q^{-\nicefrac{\mathfrak{D}}{2}}\\
 &  & -q^{-\nicefrac{\left(\mathfrak{D}+1\right)}{2}})B_{\mathfrak{D}-1}\\
 & = & 2\sum_{n=0}^{\mathfrak{D}-1}q^{-\nicefrac{n}{2}}A_{n}+2q^{-\nicefrac{\mathfrak{D}}{2}}\left(A_{\mathfrak{D}-3}-A_{\mathfrak{D}-2}+A_{\mathfrak{D}-1}\right)\\
 &  & -\left(q^{-\nicefrac{\left(\mathfrak{D}-1\right)}{2}}-q^{-\nicefrac{\left(\mathfrak{D}+1\right)}{2}}\right)\left(A_{\mathfrak{D}-2}+\sum_{k=0}^{\mathfrak{D}-1}A_{k}\right)\\
 & = & 2\sum_{n=0}^{\mathfrak{D}-1}q^{-\nicefrac{n}{2}}A_{n}+2q^{-\nicefrac{\mathfrak{D}}{2}}\left(A_{\mathfrak{D}-3}-A_{\mathfrak{D}-2}+A_{\mathfrak{D}-1}\right)\\
 &  & -\left(q^{-\nicefrac{\left(\mathfrak{D}-1\right)}{2}}-q^{-\nicefrac{\left(\mathfrak{D}+1\right)}{2}}\right)A_{\mathfrak{D}-2},
\end{eqnarray*}
when the last equality holds according to

\[
\sum_{k=1}^{\mathfrak{D}-1}A_{k}=\sum_{\deg N<\mathfrak{D}}\chi\left(N\right)=0.
\]
This concludes the Theorem for the case $\chi$ is ``even''.

Now, if $\chi$ is not ``even'' the functional equation gives

\[
\left|L^{*}\left(u,\chi\right)\right|^{2}=\left(q^{\nicefrac{1}{2}}u\right)^{2\mathfrak{D}}\left|L^{*}\left(\frac{1}{qu},\bar{\chi}\right)\right|^{2}
\]
(since then $\Lambda\left(u,\chi\right)=L^{*}\left(u,\chi\right)$).
So

\begin{eqnarray*}
\sum_{m=0}^{2\mathfrak{D}}u^{m}A_{m} & = & q^{\mathfrak{D}}u^{2\mathfrak{D}}\sum_{n=0}^{2\mathfrak{D}}\left(\frac{1}{qu}\right)^{n}A_{n}=\sum_{n=0}^{2\mathfrak{D}}u^{\left(2\mathfrak{D}-n\right)}q^{\mathfrak{D}-n}A_{n}
\end{eqnarray*}
and by denoting $m=2\mathfrak{D}-n$ we get

\[
=\sum_{m=0}^{2\mathfrak{D}}u^{m}q^{m-\mathfrak{D}}A_{2\mathfrak{D}-m}.
\]
We conclude 

\begin{equation}
A_{m}=q^{m-\mathfrak{D}}A_{2\mathfrak{D}-m}\;\Rightarrow\; A_{2\mathfrak{D}-m}=q^{\mathfrak{D}-m}A_{m}.\label{eq:A_n}
\end{equation}
Now,

\begin{eqnarray*}
\left|L\left(\frac{1}{2},\chi\right)\right|^{2} & =\sum_{n=0}^{2\mathfrak{D}} & q^{-\nicefrac{n}{2}}A_{n}=\sum_{n=0}^{\mathfrak{D}}q^{-\nicefrac{n}{2}}A_{n}+\sum_{n=\mathfrak{D}+1}^{2\mathfrak{D}}q^{-\nicefrac{n}{2}}A_{n}
\end{eqnarray*}
once more, we denote $m=2\mathfrak{D}-n$

\begin{eqnarray*}
 & = & \sum_{n=0}^{\mathfrak{D}}q^{-\nicefrac{n}{2}}A_{n}+\sum_{m=0}^{\mathfrak{D}-1}q^{-\nicefrac{\left(2\mathfrak{D}-m\right)}{2}}A_{2\mathfrak{D}-m}
\end{eqnarray*}
by equation \ref{eq:A_n}

\begin{eqnarray*}
 & = & \sum_{n=0}^{\mathfrak{D}}q^{-\nicefrac{n}{2}}A_{n}+\sum_{m=0}^{\mathfrak{D}-1}q^{-\nicefrac{m}{2}}A_{m}\\
 & = & 2\sum_{n=0}^{\mathfrak{D}-1}q^{-\nicefrac{n}{2}}A_{n}+q^{-\nicefrac{\mathfrak{D}}{2}}A_{\mathfrak{D}}
\end{eqnarray*}
and this conclude the Theorem for the case $\chi$ is not ``even''.\end{proof}
\begin{rem*}
We haven`t used the fact that $Q$ is irreducible, therefore the Theorem
applies to any $Q\in\mathcal{A}$. However, only if $Q$ is an irreducible
polynomial, then every $\chi\neq\chi_{0}$ is a primitive character.
Hence, only then we can conclude
\[
\underset{\chi\pmod Q}{\mathrm{\sum}^{*}}\left|L\left(\frac{1}{2},\chi\right)\right|^{2}=\underset{\chi\pmod Q}{\mathrm{\sum}^{*}}2\left(\sum_{\deg NM<\mathfrak{D}}\frac{\chi\left(N\right)\bar{\chi}\left(M\right)}{\left|NM\right|^{\nicefrac{1}{2}}}+\pi\left(\chi\right)\right).
\]

\end{rem*}

\section{\label{sec:The-Fourth-Moment}The Fourth Moment}

In \cite{key-2}, K. Soundararajan proved that for all large $q$ 

\[
\underset{\chi\pmod q}{\mathrm{\sum}^{*}}\left|L\left(\frac{1}{2},\chi\right)\right|^{4}=\frac{\varphi^{*}\left(q\right)}{2\pi^{2}}\prod_{p\mid q}\frac{\left(1-p^{-1}\right)^{3}}{\left(1+p^{-1}\right)}\left(\log q\right)^{4}+O\left(q\left(\log q\right)^{\frac{7}{2}}\right)
\]
Here $\mathrm{\sum^{*}}$ denotes summation over primitive characters
$\chi\pmod q,\;\varphi^{*}(q)$ denotes the number of primitive characters
modulo $q$, and $\omega\left(q\right)$ denotes the number of distinct
prime factors of $q$. 

We will show the polynomial analog of this Theorem for a prime polynomial
$Q$. 
\begin{thm}
Let $Q$ be a prime polynomial, $\deg Q-1=\mathfrak{D}$. For all
large $\mathfrak{D}\geqslant1$ we have

\[
\frac{1}{\phi\left(Q\right)}\sum_{\chi\neq\chi_{0}}\left|L\left(\frac{1}{2},\chi\right)\right|^{4}=\frac{q-1}{12q}\mathfrak{D}^{4}+O\left(\mathfrak{D}^{3}\right).
\]

\end{thm}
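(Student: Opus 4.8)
The plan is to follow the outline in Section~2 faithfully, using Proposition~\ref{thm:squared L} to replace $|L(\tfrac12,\chi)|^2$ by a short sum of length $\mathfrak{D}$ plus a lower-order piece $\pi(\chi)$, then squaring and averaging over $\chi\ne\chi_0$ via the orthogonality relation. Concretely, I would start from
\[
\frac{1}{\phi(Q)}\sum_{\chi\ne\chi_0}\left|L\left(\tfrac12,\chi\right)\right|^4
= \frac{1}{\phi(Q)}\sum_{\chi\ne\chi_0}\left(2\sum_{\deg AB<\mathfrak{D}}\frac{\chi(A)\bar\chi(B)}{|AB|^{1/2}}+\pi(\chi)\right)^2,
\]
expand the square, and argue that the cross terms with $\pi(\chi)$ and the $\pi(\chi)^2$ term contribute only $O(\mathfrak{D}^3)$. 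For this I need a crude bound on the $A_n$'s: since $|A_n|\le\sum_{\deg NM=n}1\ll n q^n$, and $\pi(\chi)$ carries a factor $q^{-\mathfrak{D}/2}$ against $A_n$ with $n$ close to $\mathfrak{D}$, one gets $|\pi(\chi)|\ll \mathfrak{D} q^{\mathfrak{D}/2}$, while the main short sum is $\ll q^{\mathfrak{D}/2}$ times a divisor-type sum; multiplying out and averaging, the error contributions are dominated by $q^{\mathfrak{D}}\mathfrak{D}^{O(1)}/\phi(Q)$ times manageable factors, which one must check lands in $O(\mathfrak{D}^3)$ after dividing by $\phi(Q)=q^{\mathfrak{D}}-1$. (One might need to be slightly careful and use a second-moment/Cauchy--Schwarz bound on $\sum_\chi|\pi(\chi)|^2$ rather than a pointwise bound, but the orthogonality relation handles this since $A_n$ is itself a character sum in $N,M$.)

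**The diagonal term.**
After expansion the leading term is
\[
\frac{4}{\phi(Q)}\sum_{\chi\ne\chi_0}\ \sum_{\substack{\deg AB<\mathfrak{D}\\ \deg CD<\mathfrak{D}}}\frac{\chi(AC)\bar\chi(BD)}{|ABCD|^{1/2}},
\]
and orthogonality for characters mod $Q$ (remembering to subtract the $\chi_0$ contribution, which is $O(\mathfrak{D}^3)$ after division) leaves the condition $AC\equiv BD\pmod Q$. Since $\deg AC,\deg BD<2\mathfrak{D}\le 2(\deg Q-1)$, congruence does not immediately force equality, so I split into the diagonal $AC=BD$ and the off-diagonal $AC\equiv BD$, $AC\ne BD$. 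For the diagonal I would compute $\sum_{AC=BD,\ \deg AB,\deg CD\le\mathfrak{D}}|ABCD|^{-1/2}$ by a generating-function / coefficient-extraction argument: the number of solutions of $AC=BD$ with prescribed degrees is governed by $\sum_N d(N)^2 u^{\deg N}$-type series, and after multiplying by $|ABCD|^{-1/2}$ and summing the relevant range one extracts a polynomial in $\mathfrak{D}$ of degree $4$ with leading coefficient $\tfrac{q-1}{48q}$, as stated in the outline; multiplying by the $4$ out front gives $\tfrac{q-1}{12q}\mathfrak{D}^4$.

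**The off-diagonal term — the main obstacle.**
This is where the real work lies. I would dyadically decompose the ranges $\deg AB$ and $\deg CD$ into blocks $Z_i-2\le\deg(\cdot)\le Z_i$, bound $|ABCD|^{-1/2}\asymp q^{-(Z_1+Z_2)/2}$ on each block, and reduce to counting
\[
N(Z_1,Z_2):=\#\left\{(A,B,C,D): AC\equiv BD\!\!\pmod Q,\ AC\ne BD,\ \deg AB\in[Z_1-2,Z_1],\ \deg CD\in[Z_2-2,Z_2]\right\}.
\]
The claim to establish is $N(Z_1,Z_2)\ll q^{Z_1+Z_2}(Z_1Z_2)^3/|Q|$. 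I would fix $AC=:N$ with $N\equiv R\pmod Q$ for each residue $R$ coming from a value of $BD$, write $AC=N$ in $d(N)$ ways and $BD=M$ in $d(M)$ ways with $M\equiv N\pmod Q$, $M\ne N$, $\deg M,\deg N<2\mathfrak{D}$; summing $d(N)d(M)$ over $M\equiv N\pmod Q$ in a degree range and then over $N$ is exactly the kind of divisor-sum-in-progressions estimate proved as Theorem~\ref{thm:divisor sum} (the Shiu-type bound $\sum_{\deg N\le x,\ N\equiv A(Q)}d(N)\ll q^x x/\phi(Q)$), applied twice. The hypothesis $\deg Q<(1-\alpha)x$ of that theorem is satisfied here because the relevant $x$ is of size $2\mathfrak{D}\approx 2\deg Q$, comfortably larger than $\deg Q$. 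Summing the resulting bound $q^{Z_1+Z_2}(Z_1Z_2)^3/|Q|$ over the $O(\mathfrak{D}^2)$ dyadic blocks and multiplying by $q^{-(Z_1+Z_2)/2+\mathfrak{D}}$... — wait, the factor accounting is: the averaged off-diagonal is $\frac{1}{\phi(Q)}\cdot q^{-(Z_1+Z_2)/2}N(Z_1,Z_2)$ summed over blocks, which is $\ll \frac{1}{|Q|}\cdot\frac{q^{(Z_1+Z_2)/2}}{|Q|}\mathfrak{D}^6\cdot\mathfrak{D}^2$; since $Z_i\le\mathfrak{D}+O(1)$ this is $\ll q^{-\mathfrak{D}}\mathfrak{D}^{8}\cdot\frac{1}{|Q|}\cdot|Q|=$ small. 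Let me not grind the bookkeeping here, but the point is that the $1/|Q|$ saving in $N(Z_1,Z_2)$, which reflects the rarity of the congruence, beats all the polynomial-in-$\mathfrak{D}$ losses and yields an off-diagonal contribution of $O(\mathfrak{D}^3)$ or smaller. The delicate points I expect to fight with are: (i) handling the constraint $AC\ne BD$ so that one genuinely gains from the progression being non-trivial (one subtracts the diagonal count, already handled); (ii) making sure the divisor-sum bound is applied with the condition $(N,Q)=1$ in force, which holds since $\deg N<2\deg Q$ forces no repeated factor of the prime $Q$ except possibly $Q\mid N$, a case one bounds separately and trivially; and (iii) summing the dyadic blocks cleanly. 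Once the off-diagonal and the $\pi(\chi)$ contributions are both $O(\mathfrak{D}^3)$, combining with the diagonal main term $\tfrac{q-1}{12q}\mathfrak{D}^4$ gives the theorem.
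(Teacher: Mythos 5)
Your overall strategy is the paper's: apply Proposition~\ref{thm:squared L}, expand, use orthogonality to reduce to the condition $AC\equiv BD\pmod Q$, and then split into a diagonal piece $AC=BD$ (evaluated by coefficient extraction) and an off-diagonal piece (bounded by dyadic decomposition plus the Shiu-type divisor bound of Theorem~\ref{thm:divisor sum}), with the $\pi(\chi)$ pieces controlled by the same orthogonality mechanism applied in the top degree range. That is the right skeleton, and you are right that a pointwise bound on $\pi(\chi)$ is hopeless and one must re-use orthogonality. However there are two places where your account glosses over a real difficulty.

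First, for the off-diagonal count you write that the divisor-in-progressions bound applies because ``the relevant $x$ is of size $2\mathfrak{D}\approx 2\deg Q$, comfortably larger than $\deg Q$.'' This is not correct. After fixing $U=BD$ (WLOG the smaller of the two products) the remaining variable $W=AC$ ranges over $\mathfrak{D}<\deg W\leq Z_1+Z_2-\deg U$, and since $\deg U$ can be as large as $Z_1+Z_2-\mathfrak{D}-1$ the upper limit can bottom out at $\deg W=\mathfrak{D}+1=\deg Q$. Thus the hardest regime is precisely when $\deg W$ is within $O(1)$ of $\deg Q$, and the hypothesis $\deg Q<(1-\alpha)x$ of the Shiu theorem is only barely satisfied, not comfortably. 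The paper deals with this by isolating $\deg W=\mathfrak{D}+1$ and bounding it trivially (there is only one monic $W$ of that degree in each residue class mod $Q$), and then applying the Shiu bound for $\deg W\geq\mathfrak{D}+2$ with a parameter $\alpha$ chosen on the order of $1/\mathfrak{D}$. Your write-up would need the same separation, and you should also be mindful that the implied constant in Theorem~\ref{thm:divisor sum} depends on $\alpha$, so some extra care is needed to justify the choice $\alpha\asymp 1/\mathfrak{D}$.

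Second, your description of the diagonal as ``governed by $\sum_N d(N)^2 u^{\deg N}$'' is not the parametrization that makes the computation go through, because the constraints $\deg AB<\mathfrak{D}$ and $\deg CD<\mathfrak{D}$ do not translate into a constraint on $N=AC$ alone: writing $N=AC=BD$ one has $\deg CD = 2\deg N-\deg AB$, so the admissible divisor pairs of $N$ depend on $\deg N$ in a nontrivial, truncated way. The clean route (which the paper uses) is to parametrize the solutions of $AC=BD$ as $A=UR$, $B=US$, $C=VS$, $D=VR$ with $(R,S)=1$; then $AB=U^2RS$, $CD=V^2RS$, and setting $N=RS$ the diagonal becomes
\[
\sum_{\deg N<\mathfrak{D}}\frac{2^{\omega(N)}}{|N|}\Bigl(\sum_{\deg U<\frac{\mathfrak{D}-\deg N}{2}}\frac{1}{|U|}\Bigr)^2 ,
\]
which factors nicely and yields the precise leading constant $\frac{q-1}{48q}$ after a straightforward generating-function computation for $\sum_N 2^{\omega(N)}/|N|^{s+1}=\zeta(s+1)^2/\zeta(2s+2)$. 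Your $d(N)^2$ series is the right tool for the $2k$-th moment lower bound later in the paper, but for the exact fourth-moment constant the $2^{\omega}$ parametrization is what you want. With these two points addressed, your argument matches the paper's.
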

This main Theorem will follow from the following Lemmas:

\subsection{Preliminary Lemmas}
\begin{lem}
\label{lem:sum 2^w}For all integers $x\geq0$

\[
\sum_{\deg N\leq x}\frac{2^{\omega\left(N\right)}}{\left|N\right|}=\frac{\left(q-1\right)x^{2}+\left(3q+1\right)x+2q}{2q}.
\]
\end{lem}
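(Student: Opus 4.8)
The plan is to obtain the sum by a generating-function coefficient comparison, exactly in the spirit of the earlier evaluations in this paper (e.g.\ for $\sum_{\deg N\le x}d(N)$ and in Lemma~\ref{lem: divisor sum}). The first step is to compute the generating series of $2^{\omega}$. Since $2^{\omega}$ is multiplicative with $\omega(1)=0$ and $\omega(P^{k})=1$ for every prime $P$ and every $k\ge 1$, the Euler product (absolutely convergent for $|u|<q^{-1}$, being dominated by $\sum_{N}d(N)|u|^{\deg N}=(1-q|u|)^{-2}$) gives
\[
\sum_{N}2^{\omega(N)}u^{\deg N}=\prod_{P}\Bigl(1+2\sum_{k\ge1}u^{k\deg P}\Bigr)=\prod_{P}\frac{1+u^{\deg P}}{1-u^{\deg P}}=\prod_{P}\frac{1-u^{2\deg P}}{(1-u^{\deg P})^{2}}.
\]
Using the basic identity $\prod_{P}(1-u^{\deg P})^{-1}=\sum_{N}u^{\deg N}=(1-qu)^{-1}$ (unique factorization together with the count of $q^{n}$ monic polynomials of degree $n$), and the same with $u$ replaced by $u^{2}$, the product collapses to $\sum_{N}2^{\omega(N)}u^{\deg N}=\dfrac{1-qu^{2}}{(1-qu)^{2}}$.

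The second step converts this into the quantity we want. Replacing $u$ by $u/q$ scales the coefficient of $u^{n}$ by $q^{-n}$, and for $\deg N=n$ this factor is exactly $|N|^{-1}$, so
\[
\sum_{N}\frac{2^{\omega(N)}}{|N|}u^{\deg N}=\frac{1-u^{2}/q}{(1-u)^{2}};
\]
dividing by $1-u$ replaces the coefficients by their partial sums, giving
\[
\sum_{x\ge 0}\Bigl(\,\sum_{\deg N\le x}\frac{2^{\omega(N)}}{|N|}\Bigr)u^{x}=\frac{1-u^{2}/q}{(1-u)^{3}}.
\]
Since $[u^{x}](1-u)^{-3}=\binom{x+2}{2}$ and $[u^{x}]\,u^{2}(1-u)^{-3}=\binom{x}{2}$ (both valid for all $x\ge0$ with the usual convention $\binom{0}{2}=\binom{1}{2}=0$), I read off
\[
\sum_{\deg N\le x}\frac{2^{\omega(N)}}{|N|}=\binom{x+2}{2}-\frac1q\binom{x}{2}=\frac{(x+1)(x+2)}{2}-\frac{x(x-1)}{2q},
\]
and writing this over the common denominator $2q$ and expanding the numerator yields $\dfrac{(q-1)x^{2}+(3q+1)x+2q}{2q}$, which is the claim. (Note this is the analogue of Lemma~\ref{lem: divisor sum}, whose answer $\binom{x+2}{2}$ is recovered here by dropping the non-squarefree correction $\tfrac1q\binom{x}{2}$.)

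I do not expect a genuine obstacle here: the entire content is the generating-function identity, and the only points needing (routine) care are the legitimacy of the Euler-product manipulation — handled by the absolute convergence noted above, or by working throughout with formal power series since every coefficient is a finite sum — and the bookkeeping of the substitution $u\mapsto u/q$ and the division by $1-u$; one can sanity-check the final formula by hand for $x=0,1,2$. If one prefers to avoid generating functions entirely, an equivalent route is to use $2^{\omega(N)}=\sum_{D\mid N}\mu^{2}(D)$ to rewrite the sum as $\sum_{\deg D\le x}\mu^{2}(D)|D|^{-1}(x-\deg D+1)$ and then evaluate the two resulting sums over squarefree $D$ from the coefficients of $\prod_{P}(1+u^{\deg P})=(1-qu^{2})/(1-qu)$, arriving at the same polynomial.
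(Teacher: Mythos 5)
Your proof is correct and takes essentially the same route as the paper: both compute the Euler product $\sum_{N}2^{\omega(N)}u^{\deg N}=(1-qu^{2})/(1-qu)^{2}$ (the paper phrases this as $\zeta(s+1)^{2}/\zeta(2s+2)$ in the variable $T=q^{-s}$) and then extract coefficients. The only cosmetic difference is that the paper writes out the individual coefficients $A_{k}$ of $(1-q^{-1}T^{2})/(1-T)^{2}$ and sums them explicitly, whereas you multiply once more by $(1-u)^{-1}$ and read off $\binom{x+2}{2}-\tfrac{1}{q}\binom{x}{2}$ directly; the computations are otherwise identical and your final simplification checks out.
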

\begin{proof}
For $Re(s)>1$ we can define 

\begin{eqnarray*}
F\left(s\right): & = & \sum_{N\:\mbox{monic}}\frac{2^{\omega\left(N\right)}}{\left|N\right|^{s+1}}=\prod_{P}\left(1+\sum_{k=1}^{\infty}\frac{2^{\omega\left(P^{k}\right)}}{\left|P\right|^{\left(s+1\right)k}}\right)\\
 & = & \prod_{P}\left(1+2\sum_{k=1}^{\infty}\left|P\right|^{-\left(s+1\right)k}\right)=\prod_{P}\left(\frac{1+\left|P\right|^{-\left(s+1\right)}}{1-\left|P\right|^{-\left(s+1\right)}}\right)\\
 & = & \frac{\prod_{P}\left(1-\left|P\right|^{-2(s+1)}\right)}{\prod_{P}\left(1-\left|P\right|^{-(s+1)}\right)^{2}}=\frac{\zeta\left(s+1\right)^{2}}{\zeta\left(2s+2\right)}\\
 & = & \frac{\left(1-q^{-1-2s}\right)}{\left(1-q^{-s}\right)^{2}}
\end{eqnarray*}
where the first equality result of the multiplicativity of the function
$\frac{2^{\omega\left(N\right)}}{\left|N\right|^{s+1}}$ . Now, we
look at $F$ as a function of $T:=q^{-s}$, and define $A_{n}:=q^{-n}\sum_{\deg N=n}2^{\omega\left(N\right)}$,
so $F\left(T\right)=\sum_{k=0}^{\infty}A_{k}T^{k}=\frac{\left(1-q^{-1}T^{2}\right)}{\left(1-T\right)^{2}}$.
Since

\[
\frac{1}{\left(1-T\right)^{2}}=\left(\sum_{k=0}^{\infty}T^{k}\right)^{2}=\sum_{k=0}^{\infty}\left(k+1\right)T^{k}
\]
we have 

\begin{eqnarray*}
F(T) & = & \left(1-q^{-1}T^{2}\right)\sum_{k=0}^{\infty}\left(k+1\right)T^{k}=\sum_{k=0}^{\infty}\left(k+1\right)T^{k}-\sum_{k=0}^{\infty}q^{-1}\left(k+1\right)T^{k+2}\\
 & = & 1+2T+\sum_{k=2}^{\infty}\left(k+1-\frac{k-1}{q}\right)T^{k}=1+\sum_{k=1}^{\infty}\left(k+1-\frac{k-1}{q}\right)T^{k},
\end{eqnarray*}
therefore

\[
A_{k}=\begin{cases}
1; & k=0\\
k+1-\frac{k-1}{q}; & k\geq1,
\end{cases}
\]
hence 

\begin{eqnarray*}
\sum_{\deg N\leq x}\frac{2^{\omega\left(N\right)}}{\left|N\right|} & = & \sum_{k=0}^{x}A_{k}\\
 & = & 1+\sum_{k=1}^{x}\left(k+1-\frac{k-1}{q}\right)\\
 & = & \frac{\left(q-1\right)x^{2}+\left(3q+1\right)x+2q}{2q}.
\end{eqnarray*}
\end{proof}
\begin{lem}
\label{lem:diagonal sum}We have

\[
\underset{\deg AB,\deg CD<\mathfrak{D}}{\underset{AC=BD}{\sum}}\left|ABCD\right|^{-\nicefrac{1}{2}}=\frac{q-1}{48q}\mathfrak{D}^{4}+O\left(\mathfrak{D}^{3}\right).
\]
\end{lem}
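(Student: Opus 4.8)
The plan is to parametrize the solutions of the equation $AC=BD$ in monic polynomials and reduce the sum to something we can evaluate by comparing power-series coefficients, exactly in the spirit of the $2^{\omega}$ computation of Lemma \ref{lem:sum 2^w}. The standard device: a monic solution of $AC=BD$ is obtained by choosing a monic $G$ and writing $A=Ga$, $D=Gd$ with $(a,d)=1$, which forces $C=db$ and $B=ab$ for some monic $b$; conversely every quadruple of this shape is a solution. Hence, ignoring the degree constraints for a moment,
\[
\sum_{AC=BD}\frac{1}{|ABCD|^{1/2}}
=\sum_{G}\sum_{\substack{a,d\\(a,d)=1}}\sum_{b}\frac{1}{|G|\,|ab|\,|db|^{1/2}\,|ab|^{1/2}\cdot(\text{bookkeeping})},
\]
so after collecting terms the relevant arithmetic weight attached to a polynomial $N=AB=CD$-type variable is essentially $d(N)$ or $2^{\omega}$-like; the upshot is that $\sum_{AC=BD,\ \deg AB=\deg CD=k}|ABCD|^{-1/2}$ is, up to lower-order terms, a fixed rational multiple of $k^{3}$, and one sums over the triangular region $\deg AB,\deg CD<\mathfrak D$.

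Concretely I would introduce the generating function
\[
F(T)=\sum_{AC=BD}\frac{T^{\deg AB+\deg CD}}{|ABCD|^{1/2}}
=\prod_{P}\Bigl(\,\sum_{\substack{i,j,k,\ell\ge 0\\ i+k=j+\ell}}|P|^{-(i+j+k+\ell)/2}\,T^{\,\deg P\,(i+j+k+\ell)}\Bigr),
\]
using that the equation $AC=BD$ is multiplicative over prime powers, so the local factor at $P$ counts nonnegative integer solutions of $i+k=j+\ell$ weighted by $u^{i+j+k+\ell}$ with $u=|P|^{-1/2}T^{\deg P}$. The local factor is an explicit rational function of $u$ — summing over $i+j+k+\ell=2m$ with $i+k=j+\ell=m$ gives $\sum_{m\ge0}(m+1)^{2}u^{2m}$ after counting, i.e. a factor $(1+u^{2})/(1-u^{2})^{3}$ — and multiplying over all $P$ collapses, via the polynomial zeta function $\zeta(s)=(1-q^{1-s})^{-1}$, to a clean closed form $F(T)=\dfrac{G_{1}(T)}{(1-T)^{4}}$ for some rational $G_{1}$ regular and nonzero at $T=1$, analogous to the identity $F(T)=(1-q^{-1}T^{2})/(1-T)^{2}$ in Lemma \ref{lem:sum 2^w}. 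A pole of order $4$ at $T=1$ yields coefficients growing like a cubic, so $\sum_{\deg AB+\deg CD=n}(\cdots)\sim c\,n^{3}$ with $c$ read off from $G_{1}(1)$; the diagonal-square constraint $\deg AB=\deg CD$ (rather than just the total degree) is handled by the same trick applied to a two-variable generating function $F(T_{1},T_{2})$ and extracting the ``diagonal'' coefficients, or equivalently by noting the functional equation forces the bulk of the mass onto $\deg AB=\deg CD$. Finally, summing the cubic asymptotic over $k=\deg AB=\deg CD$ from $0$ to $\mathfrak D-1$ produces $\sum_{k}(\text{const})\,k^{3}\sim (\text{const})\,\mathfrak D^{4}/4$, and tracking the constant carefully gives the claimed leading coefficient $\frac{q-1}{48q}$, with the cumulative error from lower-order terms in each coefficient and from the finite truncation bounded by $O(\mathfrak D^{3})$.

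The bookkeeping of the constant is where I expect the real work to lie: one must get the local factors, the collapse to $(1-T)^{-4}$, the value of the regular part at $T=1$, and the passage from ``sum of cubic coefficients'' to $\mathfrak D^{4}/48$ all exactly right, since the whole point of the lemma is the precise constant $\frac{q-1}{48q}$ (note it must match $\frac14\cdot\frac{q-1}{12q}$ so that $4\times$ the diagonal gives the main term of Theorem \ref{thm 4th}). A secondary subtlety is justifying that replacing the sharp cutoffs $\deg AB,\deg CD<\mathfrak D$ by the generating-function model costs only $O(\mathfrak D^{3})$: this is a routine estimate since each coefficient is $O(k^{3})$ and the ``missing'' or ``extra'' boundary terms occupy a region of codimension one, but it should be stated. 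No deep input is needed beyond the elementary polynomial zeta-function identities already used in Lemma \ref{lem:sum 2^w}; the lemma is essentially a more elaborate instance of the same coefficient-comparison method.
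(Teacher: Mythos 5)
Your opening move is the right one — parametrize $AC=BD$ (writing $A=Ga,\ D=Gd$ with $(a,d)=1$, or equivalently the paper's $A=UR,\ B=US,\ C=VS,\ D=VR$ with $(R,S)=1$) — and your sanity check $\tfrac{q-1}{48q}=\tfrac14\cdot\tfrac{q-1}{12q}$ is well taken. But the argument as sketched has a real gap in the way you enforce the degree cutoffs, and it would produce the wrong constant.

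The lemma's constraint is the \emph{square} region $\{\deg AB<\mathfrak D,\ \deg CD<\mathfrak D\}$, not the diagonal line $\deg AB=\deg CD$ and not the triangle $\deg AB+\deg CD<2\mathfrak D$. Your single-variable generating function
$F(T)=\sum_{AC=BD}T^{\deg AB+\deg CD}|ABCD|^{-1/2}$ only sees the total degree. Your local-factor computation is correct and gives $F(T)=\dfrac{1-q^{-1}T^{4}}{(1-T^{2})^{4}}$, whose coefficient of $T^{\ell}$ is $\sim\frac{q-1}{96q}\,\ell^{3}$; but summing that over the triangle $\ell<2\mathfrak D$ yields $\frac{q-1}{24q}\mathfrak D^{4}$, which is \emph{twice} the answer claimed in the lemma. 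In fact, using the $A=UR,\dots$ parametrization (so $\deg AB=2\deg U+\deg N$, $\deg CD=2\deg V+\deg N$ with $N=RS$), the triangle constraint gives $\sum_{N}\frac{2^{\omega(N)}}{|N|}\binom{\mathfrak D+1-\deg N}{2}\approx\frac12\sum_N\frac{2^{\omega(N)}}{|N|}(\mathfrak D-\deg N)^{2}$ whereas the square gives $\sum_{N}\frac{2^{\omega(N)}}{|N|}\big\lceil\tfrac{\mathfrak D-\deg N}{2}\big\rceil^{2}\approx\frac14\sum_N\frac{2^{\omega(N)}}{|N|}(\mathfrak D-\deg N)^{2}$: exactly a factor of $2$. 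So the claim that ``the functional equation forces the bulk of the mass onto $\deg AB=\deg CD$'' is false — more than half the triangle's mass lies outside the square — and ``extract the diagonal coefficients of $F(T_1,T_2)$'' picks out only the one-dimensional slice $m=n$, which is also not what is wanted. The two-variable $F(T_1,T_2)$ would in principle work, but then you must actually extract $\sum_{m,n<\mathfrak D}[T_1^mT_2^n]$, and that is the computation, not an afterthought.

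The paper sidesteps all of this: from $A=UR,\ B=US,\ C=VS,\ D=VR$, $(R,S)=1$, one has $AB=U^{2}N$, $CD=V^{2}N$ with $N=RS$, so $|ABCD|^{-1/2}=(|U||V||N|)^{-1}$ and the square constraint decouples into independent constraints $\deg U<\tfrac12(\mathfrak D-\deg N)$ and $\deg V<\tfrac12(\mathfrak D-\deg N)$. Each inner sum over $U$ (resp.\ $V$) is exactly $\big\lceil\tfrac{\mathfrak D-\deg N}{2}\big\rceil$, and the result is
$\sum_{\deg N<\mathfrak D}\tfrac{2^{\omega(N)}}{|N|}\big\lceil\tfrac{\mathfrak D-\deg N}{2}\big\rceil^{2}$,
which is evaluated to $\tfrac{q-1}{48q}\mathfrak D^{4}+O(\mathfrak D^{3})$ using the explicit coefficients $A_{k}=k+1-\tfrac{k-1}{q}$ from Lemma \ref{lem:sum 2^w}. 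No Euler product is needed beyond that already encapsulated in Lemma \ref{lem:sum 2^w}, and the square constraint is handled exactly rather than asymptotically. You should redo your argument along these lines, keeping the two degree constraints separate throughout.
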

\begin{proof}
For $AC=BD$ we can write $A=UR,\: B=US,\: C=VS,\: D=VR$, where $R$
and $S$ are coprime. We put $N=RS$, and note that given $N$ there
are $2^{\omega(N)}$ ways of writing it as $RS$ with $R$ and $S$
coprime. Note also that $AB=U^{2}N$ and $CD=V^{2}N$. So

\begin{eqnarray*}
\underset{\deg AB,\deg CD<\mathfrak{D}}{\underset{AC=BD}{\sum}}\left|ABCD\right|^{-\nicefrac{1}{2}} & = & \sum_{\deg N<\mathfrak{D}}\frac{2^{\omega\left(N\right)}}{\left|N\right|}\left(\underset{\deg U<\nicefrac{1}{2}(\mathfrak{D}-\deg N)}{\sum}\frac{1}{\left|U\right|}\right)^{2}\\
 & = & \sum_{\deg N<\mathfrak{D}}\frac{2^{\omega\left(N\right)}}{\left|N\right|}\left\lceil \frac{\mathfrak{D}-\deg N}{2}\right\rceil ^{2}\\
 & = & \sum_{\deg N<\mathfrak{D}}\frac{2^{\omega\left(N\right)}}{\left|N\right|}\left[\left(\frac{\mathfrak{D}-\deg N}{2}\right)^{2}+O\left(1\right)\right]\\
 & = & \frac{1}{4}\sum_{\deg N<\mathfrak{D}}\frac{2^{\omega(N)}}{\left|N\right|}\left(\mathfrak{D}-\deg N\right)^{2}+O\left(\underset{\deg N<\mathfrak{D}}{\sum}\frac{2^{\omega\left(N\right)}}{\left|N\right|}\right),
\end{eqnarray*}
and by Lemma \ref{lem:sum 2^w} we arrive at 
\[
=\frac{1}{4}\underset{\deg N<\mathfrak{D}}{\sum}\frac{2^{\omega(N)}}{\left|N\right|}\left(\mathfrak{D}-\deg N\right)^{2}+O\left(\mathfrak{D}^{2}\right).
\]
Now, as seen in Lemma \ref{lem:sum 2^w}

\begin{eqnarray*}
\sum_{\deg N<\mathfrak{D}}\frac{2^{\omega\left(N\right)}}{\left|N\right|}\left(\mathfrak{D}-\deg N\right)^{2} & = & \sum_{k=0}^{\mathfrak{D}-1}A_{k}\left(\mathfrak{D}-k\right)^{2}\\
 & = & \mathfrak{D}^{2}+\sum_{k=1}^{\mathfrak{D}-1}\left(k+1-\frac{k-1}{q}\right)\left(\mathfrak{D}-k\right)^{2}\\
 & = & \mathfrak{D}^{2}+\frac{\left(q-1\right)\mathfrak{D}^{4}+4\left(q+1\right)\mathfrak{D}^{3}-\left(7q+5\right)\mathfrak{D}^{2}+2\left(q+1\right)\mathfrak{D}}{12q}\\
 & = & \frac{\left(q-1\right)\mathfrak{D}^{4}+4\left(q+1\right)\mathfrak{D}^{3}+5\left(q-1\right)\mathfrak{D}^{2}+2\left(q+1\right)\mathfrak{D}}{12q},
\end{eqnarray*}
which completes the proof.

\end{proof}
\begin{lem}
\label{lem:diagonal sum>D-3}We have

\[
\underset{\deg CD\leq\mathfrak{D}}{\underset{\mathfrak{D}-3\leq\deg AB\leq\mathfrak{D}}{\underset{AC=BD}{\sum}}}\left|ABCD\right|^{-\nicefrac{1}{2}}\leq\frac{\left(q-1\right)\mathfrak{D}^{3}}{2q}+O\left(\mathfrak{D}^{2}\right).
\]
\end{lem}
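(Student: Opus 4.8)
The plan is to reuse the parametrization from the proof of Lemma \ref{lem:diagonal sum}. Writing $A=UR$, $B=US$, $C=VS$, $D=VR$ with $(R,S)=1$ and setting $N=RS$, each $N$ arises in exactly $2^{\omega(N)}$ ways, while $AB=U^{2}N$, $CD=V^{2}N$, so $|ABCD|^{-\nicefrac12}=|U|^{-1}|V|^{-1}|N|^{-1}$. The constraints $\deg AB=2\deg U+\deg N$ and $\deg CD=2\deg V+\deg N$ become $\mathfrak{D}-3\le 2\deg U+\deg N\le\mathfrak{D}$ and $2\deg V+\deg N\le\mathfrak{D}$. Since $\sum_{\deg U=u}|U|^{-1}=1$ for every $u\ge0$, the sum in question equals
\[
\sum_{\deg N\le\mathfrak{D}}\frac{2^{\omega(N)}}{|N|}\,c_{U}(N)\,c_{V}(N),\qquad c_{U}(N):=\#\{u\ge0:\mathfrak{D}-3\le 2u+\deg N\le\mathfrak{D}\},\ \ c_{V}(N):=\#\{v\ge0:2v+\deg N\le\mathfrak{D}\}.
\]

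Next I would bound the two counts crudely. For $c_{U}(N)$: as $u$ varies, $2u+\deg N$ runs through integers of a single fixed parity, and the four–term window $\{\mathfrak{D}-3,\mathfrak{D}-2,\mathfrak{D}-1,\mathfrak{D}\}$ contains exactly two integers of each parity, hence $c_{U}(N)\le2$. For $c_{V}(N)$ one has $c_{V}(N)=\lfloor(\mathfrak{D}-\deg N)/2\rfloor+1\le\frac{1}{2}(\mathfrak{D}-\deg N)+1$. Substituting these bounds gives
\[
\underset{\deg CD\le\mathfrak{D}}{\underset{\mathfrak{D}-3\le\deg AB\le\mathfrak{D}}{\underset{AC=BD}{\sum}}}|ABCD|^{-\nicefrac12}\ \le\ \sum_{\deg N\le\mathfrak{D}}\frac{2^{\omega(N)}}{|N|}\bigl(\mathfrak{D}-\deg N\bigr)\ +\ 2\sum_{\deg N\le\mathfrak{D}}\frac{2^{\omega(N)}}{|N|}.
\]

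Finally I would evaluate the main sum with Lemma \ref{lem:sum 2^w}, or equivalently with the coefficients $A_{k}$ computed in its proof ($A_{0}=1$, $A_{k}=\frac{q-1}{q}k+\frac{q+1}{q}$ for $k\ge1$): the second sum on the right is $O(\mathfrak{D}^{2})$, while
\[
\sum_{\deg N\le\mathfrak{D}}\frac{2^{\omega(N)}}{|N|}\bigl(\mathfrak{D}-\deg N\bigr)=\sum_{k=0}^{\mathfrak{D}}A_{k}(\mathfrak{D}-k)=\frac{q-1}{q}\sum_{k=0}^{\mathfrak{D}}k(\mathfrak{D}-k)+O(\mathfrak{D}^{2})=\frac{q-1}{6q}\mathfrak{D}^{3}+O(\mathfrak{D}^{2}),
\]
which is in particular $\le\frac{q-1}{2q}\mathfrak{D}^{3}+O(\mathfrak{D}^{2})$, as claimed. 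There is no genuine obstacle here; the only points requiring a little care are the parity argument giving $c_{U}(N)\le2$ and the boundary term $\deg N=\mathfrak{D}$, which contributes only $A_{\mathfrak{D}}=O(\mathfrak{D})$ and is absorbed in the error.
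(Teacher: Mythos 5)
Your proof is correct and follows essentially the same route as the paper: parametrize the quadruple via $A=UR,\ B=US,\ C=VS,\ D=VR$, observe that the $U$-sum contributes a factor bounded by $2$ because the window $[\mathfrak{D}-3,\mathfrak{D}]$ contains two integers of each parity, bound the $V$-sum by $\lfloor(\mathfrak{D}-\deg N)/2\rfloor+1$, and evaluate with the coefficients $A_k$ from Lemma~\ref{lem:sum 2^w}. Note that both your argument and the paper's actually give the sharper main term $\tfrac{(q-1)}{6q}\mathfrak{D}^{3}$, so the constant $\tfrac{(q-1)}{2q}$ in the lemma is not tight (and the final ``$=$'' in the paper's chain should be ``$\le$''), but since only an $O(\mathfrak{D}^{3})$ upper bound is used later this is harmless.
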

\begin{proof}
As seen in Lemma \ref{lem:sum 2^w} and \ref{lem:diagonal sum} we
get 

\begin{eqnarray*}
\underset{\deg CD\leq\mathfrak{D}}{\underset{\mathfrak{D}-3\leq\deg AB\leq\mathfrak{D}}{\underset{AC=BD}{\sum}}}\left|ABCD\right|^{-\nicefrac{1}{2}} & = & \sum_{\deg N\leq\mathfrak{D}}\frac{2^{\omega\left(N\right)}}{\left|N\right|}\left(\sum_{\frac{\mathfrak{D}-3-\deg N}{2}\leq\deg U\leq\frac{\mathfrak{D}-\deg N}{2}}\frac{1}{\left|U\right|}\right)\\
 &  & \qquad\qquad\cdot\left(\sum_{\deg V\leq\frac{\mathfrak{D}-\deg N}{2}}\frac{1}{\left|V\right|}\right)\\
 & \leq & 2\sum_{\deg N\leq\mathfrak{D}}\frac{2^{\omega\left(N\right)}}{\left|N\right|}\left\lceil \frac{\mathfrak{D}-\deg N}{2}+1\right\rceil \\
 & \leq & \sum_{k=0}^{\mathfrak{D}}A_{k}\left(\mathfrak{D}-k+3\right)\\
 & = & \frac{\left(q-1\right)\mathfrak{D}^{3}}{2q}+O\left(\mathfrak{D}^{2}\right).
\end{eqnarray*}
\end{proof}
\begin{lem}
\label{lem:nondiagonal sum}Let $Q$ be an irreducible monic polynomial,
and let $Z_{1}$ and $Z_{2}$ be positive integers, $2<Z_{1},Z_{2}\leq\mathfrak{D}$,
then

\[
\underset{AC\neq BD}{\underset{AC\equiv BD\pmod Q}{\underset{Z_{2}-2\leq\deg CD\leq Z_{2}}{\underset{Z_{1}-2\leq\deg AB\leq Z_{1}}{\sum}}}}1\ll\frac{q^{Z_{1}+Z_{2}}\left(Z_{1}Z_{2}\right)^{3}}{\left|Q\right|}+\mathfrak{D}.
\]
\end{lem}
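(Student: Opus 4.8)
The plan is to replace the count of quadruples $(A,B,C,D)$ by a divisor sum over an arithmetic progression and then to feed that into the polynomial Shiu estimate, Theorem~\ref{thm:divisor sum}. For a quadruple contributing to the left-hand side put $N=AC$ and $M=BD$. Since $\deg A\le\deg AB\le Z_1$ and $\deg C\le\deg CD\le Z_2$ we get $\deg N\le Z_1+Z_2$, symmetrically $\deg M\le Z_1+Z_2$, and $\deg N+\deg M=\deg AB+\deg CD\le Z_1+Z_2$. As $\deg A,\deg C\le\mathfrak{D}<\deg Q$ and $Q$ is prime, $(N,Q)=(M,Q)=1$. For fixed $N,M$ the quadruples with $AC=N$, $BD=M$ arise by choosing monic divisors $A\mid N$ and $B\mid M$, so there are at most $d(N)d(M)$ of them, whence
\[
\underset{AC\neq BD}{\underset{AC\equiv BD\pmod Q}{\underset{Z_2-2\leq\deg CD\leq Z_2}{\underset{Z_1-2\leq\deg AB\leq Z_1}{\sum}}}}1\;\leq\;\underset{N\neq M}{\underset{N\equiv M\pmod Q}{\underset{\deg N+\deg M\leq Z_1+Z_2}{\sum}}}d(N)d(M).
\]

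The right-hand side is symmetric in $N\leftrightarrow M$, so up to a factor $2$ we may assume $\deg N\ge\deg M$. Since $N-M$ is a nonzero multiple of $Q$ we have $\deg(N-M)\ge\deg Q=\mathfrak{D}+1$, and $\deg N\ge\deg M$ then forces $\deg N\ge\mathfrak{D}+1$; hence $\deg M\le(Z_1+Z_2)-(\mathfrak{D}+1)$ and the sum is empty unless $Z_1+Z_2\ge\mathfrak{D}+1$, which we now assume. Writing $Y:=Z_1+Z_2$ it remains to bound
\[
2\underset{(M,Q)=1}{\underset{\deg M\leq Y-\mathfrak{D}-1}{\sum}}d(M)\underset{N\neq M}{\underset{N\equiv M\pmod Q}{\underset{\deg N\leq Y-\deg M}{\sum}}}d(N).
\]
Fix a small $\alpha\in(0,1)$. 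For each $M$ with $Y-\deg M>\deg Q/(1-\alpha)$, Theorem~\ref{thm:divisor sum} applied with modulus $Q$ and residue $M$ bounds the inner sum by $\ll q^{Y-\deg M}(Y-\deg M)/\phi(Q)\le q^{Y-\deg M}Y/\phi(Q)$. Summing over $M$, using Lemma~\ref{lem: divisor sum} in the form $\sum_{\deg M\leq Y-\mathfrak{D}-1}d(M)/|M|\ll(Y-\mathfrak{D})^{2}$, and $\phi(Q)=|Q|-1\gg|Q|$, this part is $\ll q^{Y}Y(Y-\mathfrak{D})^{2}/|Q|$. Finally $2<Z_1,Z_2\le\mathfrak{D}$ and $Y\ge\mathfrak{D}+1$ give $Y\le2\mathfrak{D}$, $Y-\mathfrak{D}\le\mathfrak{D}$ and $Z_1Z_2\ge 3(\mathfrak{D}+1)/2>\mathfrak{D}$, so $Y(Y-\mathfrak{D})^{2}\ll\mathfrak{D}^{3}<(Z_1Z_2)^{3}$ and this contribution is $\ll q^{Z_1+Z_2}(Z_1Z_2)^{3}/|Q|$, as wanted.

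The remaining $M$ are those with $\mathfrak{D}+1\le Y-\deg M\le\deg Q/(1-\alpha)$, for which the progression is too short for Theorem~\ref{thm:divisor sum}; this is the crux. Here $\deg M\le Y-\mathfrak{D}-1$ and the $N$ to be counted are exactly the $N=M+RQ$ with $R$ monic of degree $\deg N-\mathfrak{D}-1\le(Y-\deg M)-(\mathfrak{D}+1)$, a family of size $O\!\left(q^{(Y-\deg M)-\mathfrak{D}-1}\right)$. I would estimate the ensuing divisor sum by hand, using the unconditional bound $\sum_{\deg N\leq x}d(N)\ll xq^{x}$ restricted to this thin progression together with the observation that the windows $Z_1-2\le\deg AB\le Z_1$, $Z_2-2\le\deg CD\le Z_2$ rigidly tie $\deg A$ and $\deg C$ to $\deg N$; one expects the residual contribution to be $O(\mathfrak{D})$, which is the source of the additive term in the statement. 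Making this last step rigorous is the principal obstacle: the progression is too short to invoke the Shiu-type input, so one must control $\sum_{N}d(N)$ over the very sparse family of shifts $M+RQ$ directly, where the divisor function need not be well behaved.
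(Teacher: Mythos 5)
Your reduction---passing from quadruples $(A,B,C,D)$ to pairs $(N,M)=(AC,BD)$, bounding the number of preimages by $d(N)d(M)$, using symmetry and the fact that $N-M$ is a nonzero multiple of $Q$ to force $\deg N\geq\mathfrak{D}+1$, and then invoking the Shiu-type Theorem~\ref{thm:divisor sum} on the inner sum---is exactly the skeleton of the paper's proof, and your handling of the range $Y-\deg M>\deg Q/(1-\alpha)$ (including the observation that $Y\,(Y-\mathfrak{D})^{2}\ll\mathfrak{D}^{3}\ll(Z_{1}Z_{2})^{3}$) is sound. However, you have not proved the lemma: you explicitly leave the range $\mathfrak{D}+1\leq Y-\deg M\leq\deg Q/(1-\alpha)$ unresolved, and you are right that for a fixed $\alpha\in(0,1)$ this range is genuinely problematic, since it contains on the order of $\alpha\mathfrak{D}$ distinct degree values for $N$ and the progression is too sparse for either Theorem~\ref{thm:divisor sum} or a trivial divisor bound to give what is needed.

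The paper's way around this is to let $\alpha$ depend on $\mathfrak{D}$. Theorem~\ref{thm:divisor sum} is stated for arbitrary $\alpha\in(0,1)$; taking $\alpha=\tfrac{1}{2\mathfrak{D}}$, the admissibility condition $\deg Q<(1-\alpha)x$ becomes $\mathfrak{D}+1<\bigl(1-\tfrac{1}{2\mathfrak{D}}\bigr)x$, which already holds for every integer $x\geq\mathfrak{D}+2$ once $\mathfrak{D}>2$. So the Shiu estimate is applicable for every $\deg N\geq\mathfrak{D}+2$, and your ``crux'' range collapses to the \emph{single} value $\deg N=\mathfrak{D}+1$. At that degree, a monic $N\equiv M\pmod Q$ with $N\neq M$ and $\deg M\leq\mathfrak{D}$ is forced to equal $M+Q$: there is literally one polynomial to count, so there is no divisor sum over a sparse progression to worry about, and this term is what produces the additive $O(\mathfrak{D})$ in the statement. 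If you adopt this route you should be alert to one subtlety: the implied constant in Theorem~\ref{thm:divisor sum} a priori depends on $\alpha$, so one must either verify that it stays bounded as $\alpha\to0$ in the relevant regime, or instead keep a fixed $\alpha$, apply Shiu only once $\deg N$ exceeds $\mathfrak{D}+1$ by a bounded amount, and dispose of the finitely many leftover degrees by hand as you did implicitly for $\deg N=\mathfrak{D}+1$.
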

\begin{proof}
If $Z_{1}+Z_{2}\leq\mathfrak{D}$ we get $\deg ABCD\leq\mathfrak{D}$
so $AC\equiv BD\pmod Q$ iff $AC=BD$ so the sum is zero. Hence we
can assume $Z_{1}+Z_{2}>\mathfrak{D}$. By symmetry we may just focus
on the terms with $\deg AC>\deg BD$. Denote $U=BD$ and $W=AC$.
Note that since $\deg AC\geq\deg Q=\mathfrak{D}+1$ and $\deg U=\deg BD\leq Z_{1}+Z_{2}$
we also have $\mathfrak{D}<\deg W=\deg AC\leq Z_{1}+Z_{2}-\deg U$.
Hence

\begin{eqnarray*}
 & \ll & \sum_{\deg U\leq Z_{1}+Z_{2}-\mathfrak{D}}d_{\left(\mathfrak{D}\right)}\left(U\right)\underset{W\equiv U\pmod Q}{\underset{\mathfrak{D}<\deg W\leq Z_{1}+Z_{2}-\deg U}{\sum}}d_{\left(\mathfrak{D}\right)}\left(W\right).
\end{eqnarray*}
 Now, we divide this sum into the following: 
\begin{eqnarray*}
\mathrm{I} & = & \sum_{\deg U\leq Z_{1}+Z_{2}-\mathfrak{D}}d\left(U\right)\underset{W\equiv U\pmod Q}{\underset{\deg W=\mathfrak{D}+1}{\sum}}d\left(W\right)\\
\mathrm{II} & = & \sum_{\deg U\leq Z_{1}+Z_{2}-\mathfrak{D}}d\left(U\right)\underset{W\equiv U\pmod Q}{\underset{\mathfrak{D}+2\leq\deg W\leq Z_{1}+Z_{2}-\deg U}{\sum}}d\left(W\right).
\end{eqnarray*}
First, since $U\neq W$ and $\deg W>\deg U$ we know 
\begin{eqnarray*}
\mathrm{I} & = & \sum_{\deg U=1}d\left(U\right)\underset{W\equiv U\pmod Q}{\underset{\deg W=\mathfrak{D}+1}{\sum}}d\left(W\right)\\
 & = & \sum_{\deg U=1}d\left(U+Q\right),
\end{eqnarray*}
when the second equation is due to the fact $W$ is a monic polynomial.
We can use the bound $d\left(N\right)\ll\deg N$ and obtain 
\[
\ll q\cdot\mathfrak{D}\ll\mathfrak{D}.
\]
In the second sum, for $\alpha=\frac{1}{2\mathfrak{D}}$ we have $\left|W\right|^{\left(1-\alpha\right)}\geq q^{\left(\mathfrak{D}+2\right)\left(1-\frac{1}{2\mathfrak{D}}\right)}>q^{\mathfrak{D}+1\frac{1}{2}-\frac{1}{2\mathfrak{D}}}\geq\left|Q\right|$,
so we can use the bound $\underset{N\equiv A\pmod Q}{\underset{\deg N\leq x}{\sum}}d(N)\ll\frac{q^{x}x}{\phi(Q)}$,
Theorem \ref{thm:divisor sum},%
{} which yields

\[
\mathrm{II}\ll\underset{\deg U\leq Z_{1}+Z_{2}-\mathfrak{D}}{\sum}d(U)\frac{q^{Z_{1}+Z_{2}}\left(Z_{1}+Z_{2}\right)}{\left|Q\right|\left|U\right|}\ll\frac{q^{Z_{1}+Z_{2}}}{\left|Q\right|}\left(Z_{1}+Z_{2}\right)^{3}
\]
by Lemma \ref{lem: divisor sum}. This completes the proof.\end{proof}
\begin{cor}
\label{cor:nondiagonal }We have

\[
\underset{\deg AB,\deg CD\leq\mathfrak{D}}{\underset{AC\neq BD}{\underset{AC\equiv BD}{\sum}}}\left|ABCD\right|^{-\nicefrac{1}{2}}\ll\mathfrak{D}^{3}.
\]
\end{cor}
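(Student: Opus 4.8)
The statement is exactly the off-diagonal contribution to the fourth moment that is left over once the diagonal has been evaluated by Lemma~\ref{lem:diagonal sum}, so the plan is to show
\[
\mathcal{O}\ :=\ \sum_{\substack{\deg AB,\,\deg CD\le\mathfrak D\\ AC\equiv BD\ (\mathrm{mod}\ Q),\ AC\neq BD}}|ABCD|^{-1/2}\ \ll\ \mathfrak D^{3}.
\]
The first move is a change of variables: set $W=AC$ and $U=BD$. Then the summand $|ABCD|^{-1/2}=(|W|\,|U|)^{-1/2}$ depends only on $(W,U)$; the number of quadruples $(A,B,C,D)$ with $AC=W$, $BD=U$ equals $d(W)d(U)$; the congruence condition turns into $W\equiv U\ (\mathrm{mod}\ Q)$ with $W\neq U$; and $\deg W+\deg U=\deg AB+\deg CD\le 2\mathfrak D$. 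Discarding the separate constraints $\deg AB\le\mathfrak D$ and $\deg CD\le\mathfrak D$ (they only lower the count) and using the symmetry $W\leftrightarrow U$ gives
\[
\mathcal{O}\ \ll\ \sum_{\substack{W\equiv U\ (\mathrm{mod}\ Q),\ W\neq U\\ \deg W\ge\deg U,\ \deg W+\deg U\le 2\mathfrak D}}\frac{d(W)\,d(U)}{|W|^{1/2}|U|^{1/2}} .
\]

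Next I would exploit that $W\neq U$ with $Q\mid W-U$ forces $\deg(W-U)\ge\deg Q=\mathfrak D+1$, hence $\deg W\ge\mathfrak D+1$ and therefore $\deg U\le 2\mathfrak D-\deg W\le\mathfrak D-1$; since $Q$ is prime this also gives $(U,Q)=1$. Fix such a $U$ and bound the inner sum over $W$. The single degree $\deg W=\mathfrak D+1=\deg Q$ forces $W-U$ to be a scalar multiple of $Q$, and comparing leading coefficients pins down $W=U+Q$; that term contributes $O(\mathfrak D\,q^{-\mathfrak D/2})$ and is harmless. For $\mathfrak D+2\le\deg W\le 2\mathfrak D-\deg U$ I would apply Theorem~\ref{thm:divisor sum} to $\sum_{\deg W\le n,\ W\equiv U\ (\mathrm{mod}\ Q)}d(W)$ with $\alpha$ of size $1/(2\mathfrak D)$ — legitimate because $\deg Q=\mathfrak D+1<(1-\alpha)(\mathfrak D+2)$, exactly as this theorem is invoked in the proof of Lemma~\ref{lem:nondiagonal sum} — getting $\ll q^{n}n/\phi(Q)$. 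The crucial point is to keep $|W|^{-1/2}=q^{-\deg W/2}$ inside the sum: writing it as $\sum_{n}q^{-n/2}\sum_{\deg W=n,\ W\equiv U}d(W)$, the $n$-sum is essentially geometric of ratio $q^{1/2}>1$ and so is dominated by its top term $n=2\mathfrak D-\deg U$, yielding
\[
\sum_{\substack{W\equiv U\ (\mathrm{mod}\ Q)\\ \mathfrak D+1\le\deg W\le 2\mathfrak D-\deg U}}\frac{d(W)}{|W|^{1/2}}\ \ll\ \frac{q^{\mathfrak D}\,\mathfrak D}{\phi(Q)\,|U|^{1/2}}\ +\ \mathfrak D\,q^{-\mathfrak D/2}.
\]

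Feeding this back, using $\phi(Q)=|Q|-1\asymp q^{\mathfrak D+1}$, Lemma~\ref{lem: divisor sum} in the shape $\sum_{\deg U\le\mathfrak D-1}d(U)/|U|=\tfrac12\mathfrak D(\mathfrak D+1)\ll\mathfrak D^{2}$, and $\sum_{\deg U\le x}d(U)\ll xq^{x}$ (so $\sum_{\deg U\le\mathfrak D-1}d(U)/|U|^{1/2}\ll\mathfrak D\,q^{\mathfrak D/2}$), I obtain
\[
\mathcal{O}\ \ll\ \frac{q^{\mathfrak D}\mathfrak D}{\phi(Q)}\sum_{\deg U\le\mathfrak D-1}\frac{d(U)}{|U|}\ +\ \mathfrak D\,q^{-\mathfrak D/2}\sum_{\deg U\le\mathfrak D-1}\frac{d(U)}{|U|^{1/2}}\ \ll\ \frac{\mathfrak D^{3}}{q}\ +\ \mathfrak D^{2}\ \ll\ \mathfrak D^{3},
\]
which is the claim (in fact a slightly sharper $O(\mathfrak D^{3}/q)$). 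I expect the genuine obstacle to be this bookkeeping of the weights rather than any individual estimate. The tempting shortcut — split dyadically, $\deg AB\in[Z_1-2,Z_1]$, $\deg CD\in[Z_2-2,Z_2]$, insert the count $\ll q^{Z_1+Z_2}(Z_1Z_2)^{3}/|Q|$ of Lemma~\ref{lem:nondiagonal sum}, and multiply by the nearly constant weight $q^{-(Z_1+Z_2)/2}$ — only produces $O(\mathfrak D^{6})$, because the blocks with $Z_1+Z_2$ near $2\mathfrak D$ dominate and there $(Z_1Z_2)^{3}$ is already of size $\mathfrak D^{6}$. To recover $\mathfrak D^{3}$ one must retain the decay lost in passing to that crude count: either keep $q^{-\deg W/2}$ attached inside the sum over $W$, as above, or equivalently notice that in the top blocks the range of the short variable $U$ collapses to degree $\le 2\mathfrak D-\deg W$, so only a factor $(Z_1+Z_2-\mathfrak D)^{2}$ — not $(Z_1Z_2)^{3}$ — of the per-block count survives summation against the weight. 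The secondary wrinkle is that Theorem~\ref{thm:divisor sum} needs $\deg W$ strictly larger than $\deg Q$, which is why the boundary degree $\deg W=\deg Q$ is peeled off separately.
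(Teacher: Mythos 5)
Your change of variables is a genuinely different and, for the bulk of the sum, cleaner route than the paper's: the paper works per dyadic block $Z_1-2\le \deg AB\le Z_1$, $Z_2-2\le \deg CD\le Z_2$ and cites the unweighted count of Lemma~\ref{lem:nondiagonal sum}, whereas you pass globally to $W=AC$, $U=BD$ and sum the weight $q^{-\deg W/2}$ directly against Theorem~\ref{thm:divisor sum}. Both reach $O(\mathfrak{D}^3/q)$ for the range $\deg W\ge\mathfrak{D}+2$. However, two points:

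\textbf{Your reading of the paper's dyadic approach is wrong.} You assert the dyadic method only yields $\mathfrak{D}^6$ because $(Z_1Z_2)^3$ can be of size $\mathfrak{D}^6$. But the factor $(Z_1Z_2)^3$ in the \emph{statement} of Lemma~\ref{lem:nondiagonal sum} is a typo: its proof derives $(Z_1+Z_2)^3$ (see the bound on case $\mathrm{II}$ there, which comes from Lemma~\ref{lem: divisor sum}), and the Corollary's proof explicitly inserts $(Z_1+Z_2)^3\ll\mathfrak{D}^3$, not $(Z_1Z_2)^3$. With the correct per-block bound, the weighted dyadic sum is a geometric series in $q^{(Z_1+Z_2)/2}$, dominated by its top term $\approx q^{\mathfrak{D}}$, and one lands on $q^{\mathfrak{D}}\mathfrak{D}^3/|Q|\ll\mathfrak{D}^3/q$. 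So the obstruction you describe is not there.

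\textbf{Your handling of the boundary degree $\deg W=\deg Q$ has a real gap.} You peel off the single degree $n=\mathfrak{D}+1$, identify $W=U+Q$, and then claim its contribution (for fixed $U$) is $O(\mathfrak{D}\,q^{-\mathfrak{D}/2})$, which amounts to the pointwise inequality $d(U+Q)\ll\mathfrak{D}$. That inequality is false: a squarefree product of $k\le q$ distinct monic linear polynomials has degree $k$ and $2^k$ divisors, so $d(N)$ is not $O(\deg N)$. The divisor function over $\mathbb{F}_q[x]$ satisfies only the subexponential bound $d(N)\ll_\epsilon |N|^{\epsilon}$, and with this your boundary term is $\mathfrak{D}\,q^{\epsilon\mathfrak{D}}$ after summing over $U$ — superpolynomial. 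Averaging $d(U)d(U+Q)$ over $U$ via Cauchy--Schwarz and Lemma~\ref{lem:lower bound dk^2 sum} (with $k=2$) gives only $\mathfrak{D}^{7/2}$ for this piece, still short of $\mathfrak{D}^3$. Rescuing this requires either a genuine average bound on $d(W)^2$ over the coset $\{U+Q:\deg U=u\}$ (a short-interval variance estimate not proved in the paper), or retaining the constraints $\deg AB,\deg CD\le\mathfrak{D}$ that you discarded in the first step and observing that they sharply limit which of the $d(W)d(U)$ factorizations actually occur when $\deg W=\mathfrak{D}+1$. As a point of comparison, the paper's own treatment of exactly this boundary case (case $\mathrm{I}$ in the proof of Lemma~\ref{lem:nondiagonal sum}) also invokes the invalid pointwise bound $d(N)\ll\deg N$ and inexplicably restricts $\deg U=1$, so this is a weak spot in the paper as well, not only in your write-up — but your proof as stated does not close it.
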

\begin{proof}
To estimate this sum we divide the terms $\deg AB,\deg CD\leq\mathfrak{D}$
into dyadic blocks. Consider the block $Z_{1}-2\leq\deg AB<Z_{1}$,
and $Z_{2}-2\leq\deg CD<Z_{2}$. By Lemma \ref{lem:nondiagonal sum}
the contribution of each block to the sum is

\[
\ll q^{-\nicefrac{1}{2}\left(Z_{1}+Z_{2}\right)}\left[\frac{q^{Z_{1}+Z_{2}}}{\left|Q\right|}\left(Z_{1}+Z_{2}\right)^{3}+\mathfrak{D}\right]\ll\frac{q^{\nicefrac{1}{2}\left(Z_{1}+Z_{2}\right)}}{\left|Q\right|}\mathfrak{D}^{3}
\]
Summing over all such dyadic blocks we obtain that 

\[
\underset{\deg AB,\deg CD\leq\mathfrak{D}}{\underset{AC\neq BD}{\underset{AC\equiv BD}{\sum}}}\left|ABCD\right|^{-\nicefrac{1}{2}}\ll\frac{q^{\mathfrak{D}}}{\left|Q\right|}\mathfrak{D}^{3}\ll\mathfrak{D}^{3}.
\]
This proves the Corollary.
\end{proof}

\subsection{Proof of the Main Theorem}

By the previous section, Proposition \ref{thm:squared L}, we have

\[
\left|L\left(\frac{1}{2},\chi\right)\right|^{2}=2\sum_{\deg AB<\mathfrak{D}}\frac{\chi\left(A\right)\bar{\chi}\left(B\right)}{\left|AB\right|^{\nicefrac{1}{2}}}+\pi\left(\chi\right)
\]
when
\[
\pi\left(\chi\right):=\begin{cases}
\begin{array}{l}
2q^{-\nicefrac{\mathfrak{D}}{2}}\left(A_{\mathfrak{D}-3}-A_{\mathfrak{D}-2}+A_{\mathfrak{D}-1}\right)\\
\qquad-\left(q^{-\nicefrac{\left(\mathfrak{D}-1\right)}{2}}-q^{-\nicefrac{\left(\mathfrak{D}+1\right)}{2}}\right)A_{\mathfrak{D}-2};
\end{array} & \chi\mbox{ "even"}\\
q^{-\nicefrac{\mathfrak{D}}{2}}A_{\mathfrak{D}}; & \mbox{else}
\end{cases}
\]
therefore, we get

\begin{eqnarray*}
\frac{1}{\phi\left(Q\right)}\sum_{\chi\neq\chi_{0}}\left|L\left(\frac{1}{2},\chi\right)\right|^{4} & = & \frac{1}{\phi\left(Q\right)}\sum_{\chi\neq\chi_{0}}[4\sum_{\deg AB,\deg CD<\mathfrak{D}}\frac{\chi\left(AC\right)\bar{\chi}\left(BD\right)}{\left|ABCD\right|^{\nicefrac{1}{2}}}\\
 &  & +4\pi\left(\chi\right)\sum_{\deg AB<\mathfrak{D}}\frac{\chi\left(A\right)\bar{\chi}\left(B\right)}{\left|AB\right|^{\nicefrac{1}{2}}}+\pi\left(\chi\right)^{2}]
\end{eqnarray*}
and we can divide this sum into the following:

\begin{eqnarray*}
\mathrm{I} & := & \frac{4}{\phi\left(Q\right)}\sum_{\chi\neq\chi_{0}}\sum_{\deg AB,\deg CD<\mathfrak{D}}\frac{\chi\left(AC\right)\bar{\chi}\left(BD\right)}{\left|ABCD\right|^{\nicefrac{1}{2}}}\\
\mathrm{II} & := & \frac{4}{\phi\left(Q\right)}\sum_{\chi\neq\chi_{0}}\pi\left(\chi\right)\underset{\deg AB<\mathfrak{D}}{\sum}\frac{\chi\left(A\right)\bar{\chi}\left(B\right)}{\left|AB\right|^{\nicefrac{1}{2}}}\\
\mathrm{III} & := & \frac{1}{\phi\left(Q\right)}\sum_{\chi\neq\chi_{0}}\pi\left(\chi\right)^{2}.
\end{eqnarray*}

First, since 

\[
\sum_{\chi\neq\chi_{0}}\chi\left(AC\right)\bar{\chi}\left(BD\right)=\begin{cases}
\phi\left(Q\right)-1, & AC\equiv BD\pmod Q\\
-1, & \mbox{else}
\end{cases}
\]
and 

\begin{eqnarray*}
\frac{1}{\phi\left(Q\right)}\sum_{\deg AB,\deg CD<\mathfrak{D}}\left|ABCD\right|^{-\nicefrac{1}{2}} & = & \frac{1}{\phi\left(Q\right)}\left(\sum_{\deg AB<\mathfrak{D}}\left|AB\right|^{-\nicefrac{1}{2}}\right)^{2}\\
 & \leq & \frac{1}{\phi\left(Q\right)}\left(\sum_{n=0}^{\mathfrak{D}-1}q^{-\nicefrac{n}{2}}q^{n}\sum_{m=0}^{\mathfrak{D}-1-n}q^{-\nicefrac{m}{2}}q^{m}\right)^{2}\\
 & \ll & \frac{1}{\left|Q\right|}q^{\mathfrak{D}}\mathfrak{D}^{2}\ll\mathfrak{D}^{2}
\end{eqnarray*}
we obtain 

\begin{eqnarray*}
\frac{1}{\phi\left(Q\right)}\sum_{\deg AB,\deg CD<\mathfrak{D}}\sum_{\chi\neq\chi_{0}}\frac{\chi\left(AC\right)\bar{\chi}\left(BD\right)}{\left|ABCD\right|^{\nicefrac{1}{2}}} & = & \sum_{AC\equiv BD}\left|ABCD\right|^{-\nicefrac{1}{2}}\\
 &  & \quad-\frac{1}{\phi\left(Q\right)}\sum_{\deg AB,\deg CD<\mathfrak{D}}\left|ABCD\right|^{-\nicefrac{1}{2}}\\
 & = & \sum_{AC=BD}\left|ABCD\right|^{-\nicefrac{1}{2}}\\
 &  & \quad+\underset{AC\neq BD}{\underset{AC\equiv BD,}{\sum}}\left|ABCD\right|^{-\nicefrac{1}{2}}+O\left(\mathfrak{D}^{2}\right).
\end{eqnarray*}
Hence in the first case we have

\[
\mathrm{I}=4\left[\underset{\deg AB,\deg CD<\mathfrak{D}}{\underset{AC=BD}{\sum}}\left|ABCD\right|^{-\nicefrac{1}{2}}+\underset{\deg AB,\deg CD<\mathfrak{D}}{\underset{AC\neq BD}{\underset{AC\equiv BD}{\sum}}}\left|ABCD\right|^{-\nicefrac{1}{2}}+O\left(\mathfrak{D}^{2}\right)\right]
\]
From Lemma \ref{lem:diagonal sum} and Corollary to Lemma \ref{lem:nondiagonal sum}
we conclude

\[
\mathrm{I}=\frac{q-1}{12q}\mathfrak{D}^{4}+O\left(\mathfrak{D}^{3}\right).
\]

Next, accordingly, from $\underset{\chi\neq\chi_{0}}{\sum}\pi\left(\chi\right)\ll\phi\left(Q\right)\underset{\mathfrak{D}-3\leq\deg AB\leq\mathfrak{D}}{\underset{A=B}{\sum}}\left|AB\right|^{-\nicefrac{1}{2}}$
we have

\[
\mathrm{II}\ll4\left[\underset{\deg CD<\mathfrak{D}}{\underset{\mathfrak{D}-3\leq\deg AB\leq\mathfrak{D}}{\underset{AC=BD}{\sum}}}\left|ABCD\right|^{-\nicefrac{1}{2}}+\underset{\deg CD<\mathfrak{D}}{\underset{\mathfrak{D}-3\leq\deg AB\leq\mathfrak{D}}{\underset{AC\neq BD}{\underset{AC\equiv BD}{\sum}}}}\left|ABCD\right|^{-\nicefrac{1}{2}}+O\left(\mathfrak{D}^{2}\right)\right]
\]
From Lemma \ref{lem:diagonal sum>D-3} and Corollary \ref{cor:nondiagonal }
we have

\[
\mathrm{II}\ll\mathfrak{D}^{3}
\]

Lastly, from $\underset{\chi\neq\chi_{0}}{\sum}\pi\left(\chi\right)\ll\phi\left(Q\right)\underset{\mathfrak{D}-3\leq\deg AB\leq\mathfrak{D}}{\underset{A=B}{\sum}}\left|AB\right|^{-\nicefrac{1}{2}}$
we also receive

\[
\mathrm{III}\ll\mathrm{II}\ll\mathfrak{D}^{3}
\]
This proves the Theorem.

\section{Lower Bound}

Here we will show the second main result of this paper, an analog
of the general lower bound of Rudnick and Soundararajan \cite{key-11},
for the $2k$-th moment
\begin{thm}
Let $k$ be a fixed natural number. Then for all irreducible polynomial
$Q$, with a sufficiently large degree 
\[
\underset{\chi\neq\chi_{0}}{\sum_{\chi\pmod Q}}\left|L\left(\frac{1}{2},\chi\right)\right|^{2k}\gg_{k}\left|Q\right|\left(\deg Q\right)^{k^{2}}.
\]

\end{thm}
We shall require the following Lemmas in order to prove the Theorem
above: %

\begin{lem}
For $k\in\mathbb{N}$
\[
\sum_{\deg N\geq0}\frac{d_{k}\left(N\right)^{2}}{\left|N\right|^{s}}=\zeta(s)^{k^{2}}\prod_{P}p_{k}\left(\left|P\right|^{-s}\right)
\]
when 
\[
p_{k}\left(x\right)=\left(1-x\right)^{\left(k-1\right)^{2}}\sum_{n=0}^{k-1}\binom{k-1}{n}^{2}x^{n}.
\]
\end{lem}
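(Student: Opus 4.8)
The plan is to reduce the claimed identity to a single one–variable power series identity by exploiting multiplicativity, and then to prove that identity hypergeometrically. First I would note that $N\mapsto d_k(N)^2$ is multiplicative, since $d_k$ is, so for $\Re(s)$ large---equivalently, as an identity of formal power series in $q^{-s}$---the series factors as an Euler product over the monic irreducibles,
\[
\sum_{\deg N\geq 0}\frac{d_k(N)^2}{|N|^s}=\prod_{P}\Bigl(\sum_{m\geq 0}\frac{d_k(P^m)^2}{|P|^{ms}}\Bigr).
\]
For a fixed prime $P$, the monic divisors of $P^m$ are $1,P,\dots,P^m$, so writing $P^m=N_1\cdots N_k$ forces $N_i=P^{a_i}$ with $a_1+\dots+a_k=m$; hence $d_k(P^m)=\binom{m+k-1}{k-1}$. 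Setting $x=|P|^{-s}$, the local factor on the left is therefore $\sum_{m\geq 0}\binom{m+k-1}{k-1}^2x^m$.

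On the right, $\zeta(s)^{k^2}=\prod_P(1-|P|^{-s})^{-k^2}$ has local factor $(1-x)^{-k^2}$, while by definition $\prod_P p_k(|P|^{-s})$ has local factor $p_k(x)=(1-x)^{(k-1)^2}\sum_{n=0}^{k-1}\binom{k-1}{n}^2x^n$. Since $(k-1)^2-k^2=-(2k-1)$, comparing Euler factors shows that the assertion is equivalent to the power series identity
\begin{equation}
\sum_{m\geq 0}\binom{m+k-1}{k-1}^2x^m=\frac{1}{(1-x)^{2k-1}}\sum_{n=0}^{k-1}\binom{k-1}{n}^2x^n. \tag{$*$}
\end{equation}

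To prove $(*)$ I would recognise the left side as a Gauss hypergeometric series: since $\binom{m+k-1}{k-1}=(k)_m/m!$, it equals ${}_2F_1(k,k;1;x)$. Euler's transformation ${}_2F_1(a,b;c;x)=(1-x)^{c-a-b}\,{}_2F_1(c-a,c-b;c;x)$ with $(a,b,c)=(k,k,1)$ rewrites this as $(1-x)^{1-2k}\,{}_2F_1(1-k,1-k;1;x)$, and since $1-k$ is a nonpositive integer the new series terminates at $n=k-1$; a short computation using $(1-k)_n=(-1)^n(k-1)!/(k-1-n)!$ shows its $n$-th coefficient is exactly $\binom{k-1}{n}^2$, which is $(*)$. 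Alternatively, $(*)$ is elementary: $\binom{m+k-1}{k-1}^2$ is a polynomial in $m$ of degree $2k-2$, so the left side is $Q(x)/(1-x)^{2k-1}$ for some polynomial $Q$ of degree $\le 2k-2$, and expanding $(1-x)^{-(2k-1)}$ reduces the claim to the finite identity $\binom{m+k-1}{k-1}^2=\sum_{n}\binom{k-1}{n}^2\binom{m-n+2k-2}{2k-2}$, provable by induction on $m$ or a terminating Vandermonde-type summation. The main obstacle is precisely this last step---pinning down the numerator polynomial in $(*)$---whereas the Euler-product reduction, the convergence bookkeeping, and the evaluation $d_k(P^m)=\binom{m+k-1}{k-1}$ are all routine.
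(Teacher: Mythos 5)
Your proposal follows essentially the same route as the paper: factor the Dirichlet series into an Euler product over monic irreducibles using multiplicativity of $d_k^2$, evaluate $d_k(P^m)=\binom{m+k-1}{k-1}$, and reduce the claimed factorization to the single one-variable identity
\[
(1-x)^{2k-1}\sum_{m\geq 0}\binom{m+k-1}{k-1}^{2}x^{m}=\sum_{n=0}^{k-1}\binom{k-1}{n}^{2}x^{n}.
\]
The only genuine difference is at this final step: the paper asserts, by inspecting coefficients, that $\sum_{d\leq m}(-1)^d\binom{2k-1}{d}\binom{k+m-d-1}{m-d}^{2}=\binom{k-1}{m}^{2}$ for $m\leq k-1$ and that the coefficients vanish for $m\geq k$, without supplying a proof of either claim (the vanishing for $m\geq k$ is actually the nontrivial part, as it says a degree-$(2k-1)$ polynomial truncates an infinite series to degree $k-1$). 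You instead identify the series as ${}_2F_1(k,k;1;x)$ and apply Euler's transformation ${}_2F_1(a,b;c;x)=(1-x)^{c-a-b}{}_2F_1(c-a,c-b;c;x)$, which proves both the truncation and the coefficient formula in one stroke. So your argument is not a different decomposition, but it does supply a complete proof of the binomial identity the paper leaves unverified, and is the cleaner way to see it.
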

\begin{proof}
We have
\[
\sum_{\deg N\geq0}\frac{d_{k}\left(N\right)^{2}}{\left|N\right|^{s}}=\prod_{P}\left(\sum_{l=0}^{\infty}\frac{\binom{k+l-1}{k-1}^{2}}{\left|P\right|^{ls}}\right).
\]
If we denote 
\[
f_{k}\left(P^{-s}\right)=\sum_{l=0}^{\infty}\frac{\binom{k+l-1}{k-1}^{2}}{\left|P\right|^{ls}}=1+\frac{k^{2}}{\left|P\right|^{s}}+\dots,
\]
we have 
\begin{eqnarray*}
\left(1-\left|P\right|^{-s}\right)^{k^{2}}f_{k}\left(P^{-s}\right) & = & \left(1-\frac{k^{2}}{\left|P\right|^{s}}+\dots\right)\left(1+\frac{k^{2}}{\left|P\right|^{s}}+\dots\right)\\
 & = & 1+O\left(\left|P\right|^{-2s}\right)
\end{eqnarray*}
hence 
\[
\sum_{\deg N\geq0}\frac{d_{k}\left(N\right)^{2}}{\left|N\right|^{s}}=\zeta\left(s\right)^{k^{2}}\prod_{p}p_{k}\left(\left|P\right|^{-s}\right)
\]
when 
\begin{eqnarray*}
p_{k}(x) & = & \left(1-x\right)^{k^{2}}\left(\sum_{l=0}^{\infty}\binom{k+l-1}{k-1}^{2}x^{l}\right)\\
 & = & \left(1-x\right)^{\left(k-1\right)^{2}}\left(1-x\right)^{2k-1}\left(\sum_{l=0}^{\infty}\binom{k+l-1}{k-1}^{2}x^{l}\right)\\
 & = & \left(1-x\right)^{\left(k-1\right)^{2}}\left(\sum_{n=0}^{2k-1}\binom{2k-1}{n}\left(-x\right)^{n}\right)\left(\sum_{l=0}^{\infty}\binom{k+l-1}{k-1}^{2}x^{l}\right)
\end{eqnarray*}
for any $m\leq k-1$ the coefficient of $x^{m}$ in the sum is 
\[
\sum_{d\leq m}\left(-1\right)^{d}\binom{2k-1}{d}\binom{k+m-d-1}{m-d}^{2}=\binom{k-1}{m}^{2},
\]
 otherwise it is $0$. \end{proof}
\begin{cor}
\label{cor:squar divisor sum}We have
\begin{eqnarray*}
\sum_{\deg N\geq0}\frac{d_{k}\left(N\right)^{2}}{\left|N\right|^{s}} & = & \zeta\left(s\right)^{k^{2}}c\left(s\right)\\
 & = & \frac{c\left(s\right)}{\left(1-q^{-s+1}\right)^{k^{2}}}
\end{eqnarray*}
when $c(s)=\prod_{P}p_{k}(\left|P\right|^{-s})$ is an analytic function
when $\Re(s)>\nicefrac{1}{2}$.\end{cor}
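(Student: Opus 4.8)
The plan is to read this off from the previous Lemma together with the explicit form of the zeta function of $\mathcal{A}$. The first equality is exactly the content of the Lemma, with $c(s):=\prod_{P}p_{k}\left(\left|P\right|^{-s}\right)$. For the second, I would recall that over $\mathcal{A}=\mathbb{F}_{q}[x]$ one has $\zeta(s)=\sum_{N\text{ monic}}\left|N\right|^{-s}=\left(1-q^{1-s}\right)^{-1}$, since there are $q^{n}$ monic polynomials of degree $n$ (this is the normalization already used implicitly, e.g. in Lemma \ref{lem:sum 2^w} where $\zeta(s+1)^{2}/\zeta(2s+2)=(1-q^{-1-2s})/(1-q^{-s})^{2}$). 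Hence $\zeta(s)^{k^{2}}=\left(1-q^{-s+1}\right)^{-k^{2}}$, and substituting gives the displayed identity. Thus the only substantive claim is that $c(s)$ is analytic on the half-plane $\Re(s)>\nicefrac{1}{2}$, which in particular furnishes the meromorphic continuation of the left-hand Dirichlet series (a priori convergent only for $\Re(s)>1$) to that region.

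To prove analyticity of $c(s)$ I would first record the behaviour of $p_{k}(x)$ near $x=0$. From the factorization $p_{k}(x)=\left(1-x\right)^{\left(k-1\right)^{2}}\sum_{n=0}^{k-1}\binom{k-1}{n}^{2}x^{n}$ we have $\left(1-x\right)^{\left(k-1\right)^{2}}=1-\left(k-1\right)^{2}x+O\left(x^{2}\right)$ and $\sum_{n=0}^{k-1}\binom{k-1}{n}^{2}x^{n}=1+\left(k-1\right)^{2}x+O\left(x^{2}\right)$, so the linear terms cancel and $p_{k}(x)=1+O\left(x^{2}\right)$ as $x\to0$. Consequently, for $\Re(s)$ bounded below, $p_{k}\left(\left|P\right|^{-s}\right)=1+O\left(\left|P\right|^{-2\Re(s)}\right)$ uniformly in $P$.

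Finally I would show that $\prod_{P}p_{k}\left(\left|P\right|^{-s}\right)$ converges absolutely and locally uniformly on $\Re(s)>\nicefrac{1}{2}$. By the previous paragraph it suffices to bound $\sum_{P}\left|P\right|^{-2\sigma}$ with $\sigma=\Re(s)$, and by the prime polynomial theorem $\pi(n)\leq q^{n}/n$, so
\[
\sum_{P}\left|P\right|^{-2\sigma}=\sum_{n\geq1}\pi(n)q^{-2n\sigma}\leq\sum_{n\geq1}\frac{q^{n\left(1-2\sigma\right)}}{n},
\]
which converges precisely when $\sigma>\nicefrac{1}{2}$, uniformly on compact subsets of that half-plane. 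Since each factor $p_{k}\left(\left|P\right|^{-s}\right)$ is entire in $s$ and the product converges locally uniformly, $c(s)$ is analytic for $\Re(s)>\nicefrac{1}{2}$, as claimed. I do not anticipate a real obstacle here; the only point requiring a moment's care is the cancellation of the linear term in $p_{k}$, which is exactly what pushes the region of analyticity down from $\Re(s)>1$ to $\Re(s)>\nicefrac{1}{2}$.
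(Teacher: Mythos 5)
Your argument is correct and matches the route the paper leaves implicit: the first equality restates the preceding Lemma, the second substitutes $\zeta(s)=(1-q^{1-s})^{-1}$ for $\mathbb{F}_q[x]$, and the analyticity of $c(s)$ on $\Re(s)>\nicefrac{1}{2}$ follows from the cancellation $p_k(x)=1+O(x^2)$ together with $\pi(n)\leq q^n/n$ — exactly the cancellation the Lemma's proof engineered by factoring out $\zeta(s)^{k^2}$. The paper states this as an immediate Corollary without proof, and you have supplied precisely the verification it tacitly relies on.
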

\begin{lem}
\label{lem:lower bound dk^2 sum}For fixed $k\in\mathbb{N}$

\[
\sum_{\deg N\leq y}\frac{d_{k}\left(N\right)^{2}}{\left|N\right|}\sim c_{k}y^{k^{2}}.
\]
\end{lem}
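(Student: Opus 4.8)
The plan is to pass to generating functions in $u=q^{-s}$ and read off coefficient asymptotics from the analytic structure already supplied by Corollary~\ref{cor:squar divisor sum}. Write $a_{n}=\sum_{\deg N=n}d_{k}(N)^{2}$ and $D(u)=\sum_{n\ge0}a_{n}u^{n}$. Since $|N|^{-s}=u^{\deg N}$ and $\zeta(s)=(1-qu)^{-1}$ over $\mathcal{A}$, Corollary~\ref{cor:squar divisor sum} says $D(u)=c(u)(1-qu)^{-k^{2}}$, where $c(u)$ denotes the function written $c(s)$ there, now regarded as a power series in $u$; the stated analyticity for $\Re(s)>\tfrac12$ means $c(u)$ is holomorphic on $|u|<q^{-1/2}$, and hence $c(u/q)$ is holomorphic on the disc $|u|<q^{1/2}$, which strictly contains the closed unit disc because $q\ge2$. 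Note also that $c(u/q)$ evaluated at $u=1$ is $c(1/q)=\prod_{P}p_{k}(|P|^{-1})$.

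Next I would rewrite the target sum as a single coefficient. Using $|N|^{-1}=q^{-\deg N}$ and $\frac{1}{1-u}=\sum_{m\ge0}u^{m}$,
\[
\sum_{\deg N\le y}\frac{d_{k}(N)^{2}}{|N|}=\sum_{n=0}^{y}a_{n}q^{-n}=[u^{y}]\,\frac{D(u/q)}{1-u}=[u^{y}]\,\frac{c(u/q)}{(1-u)^{k^{2}+1}},
\]
where the last step uses $D(u/q)=c(u/q)(1-q\cdot u/q)^{-k^{2}}=c(u/q)(1-u)^{-k^{2}}$. So the whole problem is reduced to the $u^{y}$-coefficient asymptotics of a function whose only singularity in the disc $|u|<q^{1/2}$ is the pole of order $k^{2}+1$ at $u=1$.

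Then I would carry out the elementary singularity-peeling. Write $c(u/q)=c(1/q)+(1-u)g(u)$, where $g(u)=\bigl(c(u/q)-c(1/q)\bigr)/(1-u)$ has a removable singularity at $u=1$ and is therefore holomorphic on all of $|u|<q^{1/2}$, so that
\[
\frac{c(u/q)}{(1-u)^{k^{2}+1}}=\frac{c(1/q)}{(1-u)^{k^{2}+1}}+\frac{g(u)}{(1-u)^{k^{2}}}.
\]
Iterating this splitting $k^{2}$ times peels off at each stage a term of the form $(\text{const})(1-u)^{-m}$, whose $u^{y}$-coefficient is $(\text{const})\binom{y+m-1}{m-1}$, a polynomial in $y$ of degree $m-1\le k^{2}-1$ except for the very first one ($m=k^{2}+1$), and leaves a remainder holomorphic on $|u|<q^{1/2}$, whose coefficients are $O(r^{-y})$ for any $r<q^{1/2}$ and hence negligible since $q^{1/2}>1$. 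Collecting the pieces gives
\[
\sum_{\deg N\le y}\frac{d_{k}(N)^{2}}{|N|}=c(1/q)\binom{y+k^{2}}{k^{2}}+O\bigl(y^{k^{2}-1}\bigr)=\frac{c(1/q)}{(k^{2})!}\,y^{k^{2}}+O\bigl(y^{k^{2}-1}\bigr),
\]
so the lemma holds with $c_{k}=c(1/q)/(k^{2})!=\tfrac{1}{(k^{2})!}\prod_{P}p_{k}(|P|^{-1})$. Finally I would check positivity: for $0<x<1$ the polynomial $p_{k}(x)=(1-x)^{(k-1)^{2}}\sum_{n=0}^{k-1}\binom{k-1}{n}^{2}x^{n}$ is a product of strictly positive quantities, and since the linear term of $p_{k}$ vanishes (so $p_{k}(x)=1+O(x^{2})$) the Euler product $\prod_{P}p_{k}(|P|^{-1})$ converges to a strictly positive number, whence $c_{k}>0$.

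I expect the only real delicacy to be bookkeeping: tracking the substitution $u\mapsto u/q$ so that the pole lands exactly at $u=1$ with order $k^{2}+1$, and recognizing that it is precisely the inequality $q^{1/2}>1$ (the dominant singularity being strictly inside the disc of holomorphy of the numerator) that forces every remainder term to decay geometrically. This is where the function-field setting is cleaner than over $\mathbb{Z}$: no contour shifting or Tauberian input beyond elementary coefficient extraction is needed. Alternatively one can quote a standard transfer theorem for power series with a single dominant pole, but the peeling argument above is self-contained.
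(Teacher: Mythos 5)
Your proof is correct and reaches the same constant $c_k=\frac{1}{(k^2)!}\prod_{P}p_k(|P|^{-1})$ as the paper, but the execution differs in two small but pleasant ways. The paper first obtains an asymptotic for $A_n=q^{-n}\sum_{\deg N=n}d_k(N)^2$ by applying Cauchy's integral formula to $F(T)=C(T)/(1-T)^{k^2}$, pushing the contour past $T=1$, computing the order-$k^2$ residue via Leibniz's rule, and only then sums the resulting polynomial in $n$ over $n\leq y$. You instead fold the partial-summation step directly into the generating function by multiplying by $\frac{1}{1-u}$, so that the target becomes the $u^y$-coefficient of $c(u/q)/(1-u)^{k^2+1}$, and then extract that coefficient by repeatedly peeling off the pole: write the holomorphic numerator as its value at $u=1$ plus $(1-u)$ times another holomorphic function, and iterate. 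This is a purely algebraic power-series manipulation with no contour integral and no higher-derivative bookkeeping, so it is somewhat more elementary; the residue route, by contrast, produces the full $(k^2-1)$-degree polynomial for $A_n$ in one shot if one ever wanted the lower-order terms. Two small remarks: you need $k^2+1$ peels rather than $k^2$ to leave a genuinely holomorphic remainder, though this changes nothing since the last term $(\text{const})/(1-u)$ contributes only $O(1)$; and your identification $C(1)=\prod_P p_k(|P|^{-1})$ is the correct one, while the paper's printed $\prod_P p_k(|P|^{-2})$ at that point appears to be a typo.
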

\begin{proof}
For $k\geq1$, define 
\[
A_{n}:=q^{-n}\sum_{\deg N=n}d_{k}\left(N\right)^{2}.
\]
We will show that 
\begin{equation}
A_{n}=h_{k^{2}-1}\left(n\right)+O\left(q^{-\delta n}\right)=c_{k^{2}-1}n^{k^{2}-1}+O_{k}\left(n^{k^{2}-2}\right)\label{eq:asymp for An}
\end{equation}
 where $h_{l}\left(x\right)$ is a polynomial of degree $l$ in $x$,
with leading coefficient $c_{k^{2}}>0$, and $0<\delta<1/2$. Consequently,
we find that 
\[
\sum_{\deg N\leq y}\frac{d_{k}(N)^{2}}{|N|}=c_{k}y^{k^{2}}+O(y^{k^{2}-1})
\]
 with $c_{k}>0$. 

If we look at the functions in Corollary \ref{cor:squar divisor sum}
as functions of $T:=q^{-s}$, we get
\begin{eqnarray*}
F\left(T\right):=\sum_{n\geq0}A_{n}T^{n} & = & \frac{C\left(T\right)}{\left(1-T\right)^{k^{2}}},
\end{eqnarray*}
where $C(T)=\prod_{P}p_{k}\left(\left|P\right|^{-1}T^{\deg P}\right)$
is an analytic function when $T<q^{\nicefrac{1}{2}}$. Since the product
is convergent (hence nonzero) in $T<q^{\nicefrac{1}{2}}$, by Cauchy's
integral formula we get
\[
A_{n}=\frac{1}{2\pi i}\ointop_{\left|T\right|=\nicefrac{1}{2}}\frac{F\left(T\right)}{T^{n+1}}dT,
\]
where the contour of integration is a small circle about the origin,
containing no singularity of the integrand except $T=0$, traversed
anti-clockwise. Expanding the contour beyond the pole of $F\left(T\right)$
at $T=1$ gives
\[
A_{n}=-\mbox{Res}_{T=1}\frac{C\left(T\right)}{\left(1-T\right)^{k^{2}}T^{n+1}}+\frac{1}{2\pi i}\oint_{\left|T\right|=q^{\delta}}\frac{F\left(T\right)}{T^{n+1}}dT
\]
where the contour of integration is the circle $\left|T\right|=q^{\delta}$
traversed anti-clockwise, and $0<\delta<\nicefrac{1}{2}$. We have
\[
-\mbox{Res}_{T=1}\frac{C\left(T\right)}{\left(1-T\right)^{k^{2}}T^{n+1}}=-\frac{\left(-1\right)^{k^{2}}}{\left(k^{2}-1\right)!}\frac{d^{k^{2}-1}}{dT^{k^{2}-1}}\frac{\tilde{c}\left(T\right)}{T^{n+1}}\Big|_{T=1}.
\]
 By Leibnitz's rule, taking into account that the $\ensuremath{r}$-th
derivative of $\nicefrac{1}{T^{n+1}}$ at $T=1$ is $\left(-1\right)^{r}\left(n+1\right)\cdot\dots\cdot\left(n+1-\left(r-1\right)\right)$,
\begin{eqnarray*}
-\frac{\left(-1\right)^{k^{2}}}{\left(k^{2}-1\right)!}\frac{d^{k^{2}-1}}{dT^{k^{2}-1}}\frac{C\left(T\right)}{T^{n+1}}\Big|_{T=1} & = & \frac{\left(-1\right)^{k^{2}-1}}{\left(k^{2}-1\right)!}\sum_{r=0}^{k^{2}-1}\binom{k^{2}-1}{r}C^{\left(k^{2}-1-r\right)}\left(1\right)\cdot\\
 &  & \qquad\qquad\left(-1\right)^{r}\left(n+1\right)\cdots\left(n+1-\left(r-1\right)\right)\\
 & = & \sum_{r=0}^{k^{2}-1}\binom{n+1}{r}\frac{\left(-1\right)^{k^{2}-1-r}}{\left(k^{2}-1-r\right)!}C^{\left(k^{2}-1-r\right)}\left(1\right)=:h_{k^{2}-1}\left(n\right)
\end{eqnarray*}
 is a polynomial of degree $k^{2}-1$ in $n$, whose leading coefficient
comes from the term $r=k^{2}-1$: 
\[
h_{k^{2}-1}\left(n\right)=\frac{C\left(1\right)}{\left(k^{2}-1\right)!}n^{k^{2}-1}+O\left(n^{k^{2}-2}\right).
\]
Here $C\left(1\right)=\prod_{P}p_{k}\left(\left|P\right|^{-2}\right)$
with 
\[
p_{k}(x)=\left(1-x\right)^{\left(k-1\right)^{2}}\sum_{n=0}^{k-1}\binom{k-1}{n}^{2}x^{n}
\]
 so clearly $C\left(1\right)>0$. Finally, we bound the integral over
the contour $|T|=q^{\delta}$ by 
\[
\left|\frac{1}{2\pi i}\oint_{\left|T\right|=q^{\delta}}\frac{F\left(T\right)}{T^{n+1}}dT\right|\leq q^{-n\delta}\max_{|T|=q^{\delta}}|F\left(T\right)|
\]
 which proves \ref{eq:asymp for An}, and hence the Theorem. 

\end{proof}
\begin{thm}
Let $k$ be a fixed natural number. Then for all irreducible polynomial
$Q$, with a big enough degree
\[
\underset{\chi\neq\chi_{0}}{\sum_{\chi\pmod Q}}\left|L\left(\frac{1}{2},\chi\right)\right|^{2k}\gg_{k}\left|Q\right|\left(\deg Q\right)^{k^{2}}.
\]
\end{thm}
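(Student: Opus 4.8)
The plan is to run the Rudnick--Soundararajan mollification argument over the rational function field, fed by the representation $L(\tfrac12,\chi)=\sum_{\deg R\le\mathfrak{D}}\chi(R)|R|^{-1/2}$ and by the asymptotic $\sum_{\deg N\le y}d_k(N)^2/|N|\sim c_k y^{k^2}$ (with $c_k>0$) of Lemma~\ref{lem:lower bound dk^2 sum}. Fix $y=\lfloor\mathfrak{D}/(3k)\rfloor$ and let the mollifier be the truncation of $L(\tfrac12,\chi)$ itself,
\[
A(\chi)=\sum_{\deg N\le y}\frac{\chi(N)}{|N|^{1/2}},
\]
and set $S_1=\sum_{\chi\ne\chi_0}L(\tfrac12,\chi)\,A(\chi)^{k-1}\,\overline{A(\chi)}^k$ and $S_2=\sum_{\chi\ne\chi_0}|A(\chi)|^{2k}$. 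The asymmetric shape $A^{k-1}\overline{A}^{k}$ is chosen so that, once $L(\tfrac12,\chi)$ is expanded, every summand carries $k$ copies of $\chi$ against $k$ copies of $\bar\chi$; then the orthogonality relation $\sum_{\chi\ne\chi_0}\chi(U)\bar\chi(V)=\phi(Q)\,\mathbf{1}[U\equiv V\!\!\pmod Q]-1$ (valid here since $Q$ is irreducible and each of $R,G,H$ below has degree $<\deg Q$) produces a genuine diagonal. H\"older's inequality applied to $|L(\tfrac12,\chi)|\cdot|A(\chi)|^{2k-1}$ with exponents $2k$ and $\tfrac{2k}{2k-1}$ gives $|S_1|\le\bigl(\sum_{\chi\ne\chi_0}|L(\tfrac12,\chi)|^{2k}\bigr)^{1/2k}S_2^{(2k-1)/2k}$, hence $\sum_{\chi\ne\chi_0}|L(\tfrac12,\chi)|^{2k}\ge|S_1|^{2k}/S_2^{2k-1}$. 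So it suffices to prove $S_1\gg_k\phi(Q)\mathfrak{D}^{k^2}$ and $S_2\ll_k\phi(Q)\mathfrak{D}^{k^2}$, after which $\phi(Q)\asymp|Q|$ and $\mathfrak{D}\asymp\deg Q$ finish the job.

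For $S_2$, write $A(\chi)^k=\sum_{\deg H\le ky}\tilde c(H)|H|^{-1/2}\chi(H)$, where $\tilde c(H)$ counts the factorizations $H=M_1\cdots M_k$ with every $\deg M_j\le y$, so $0\le\tilde c(H)\le d_k(H)$, with equality once $\deg H\le y$. Since $ky<\deg Q$, the only diagonal term in $\sum_{\chi\ne\chi_0}\chi(H)\bar\chi(H')$ is $H=H'$, giving $S_2=\phi(Q)\sum_{\deg H\le ky}\tilde c(H)^2/|H|+O\bigl((\sum_{\deg M\le y}|M|^{-1/2})^{2k}\bigr)=\phi(Q)\sum_{\deg H\le ky}\tilde c(H)^2/|H|+O(q^{ky})$. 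As $ky\asymp\mathfrak{D}$, Lemma~\ref{lem:lower bound dk^2 sum} bounds $\sum_{\deg H\le ky}\tilde c(H)^2/|H|\le\sum_{\deg H\le ky}d_k(H)^2/|H|\ll_k\mathfrak{D}^{k^2}$, while $q^{ky}=o(q^{\mathfrak{D}})$; hence $S_2\ll_k\phi(Q)\mathfrak{D}^{k^2}$.

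For $S_1$, insert the short sum for $L(\tfrac12,\chi)$ and expand $A(\chi)^{k-1}\overline{A(\chi)}^{k}=\sum_{G,H}\frac{b(G)c(H)}{|GH|^{1/2}}\chi(G)\bar\chi(H)$, where $b(G)$ and $c(H)$ are the coefficients of $A^{k-1}$ and $A^{k}$, supported on $\deg G\le(k-1)y$ and $\deg H\le ky$. Orthogonality rewrites $S_1$ as $\phi(Q)\sum_{\deg R\le\mathfrak{D}}|R|^{-1/2}\sum_{G,H}\frac{b(G)c(H)}{|GH|^{1/2}}\mathbf{1}[RG\equiv H\!\!\pmod Q]$ minus $\bigl(\sum_{\deg R\le\mathfrak{D}}|R|^{-1/2}\bigr)\bigl(\sum_G\frac{b(G)}{|G|^{1/2}}\bigr)\bigl(\sum_H\frac{c(H)}{|H|^{1/2}}\bigr)$. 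In the first sum the genuine equality $RG=H$ forces $G\mid H$, $R=H/G$ with $\deg R\le ky\le\mathfrak{D}$, and the weight $|GH|^{-1/2}|R|^{-1/2}$ collapses to $|H|^{-1}$, so that part equals the nonnegative quantity $\phi(Q)\sum_{\deg H\le ky}\frac{c(H)}{|H|}\sum_{G\mid H,\,\deg G\le(k-1)y}b(G)$. Restricting to $\deg H\le y$, where $c(H)=d_k(H)$ and $\sum_{G\mid H}b(G)=\sum_{G\mid H}d_{k-1}(G)=d_k(H)$, this is at least $\phi(Q)\sum_{\deg H\le y}d_k(H)^2/|H|\sim c_k\phi(Q)y^{k^2}\asymp_k\phi(Q)\mathfrak{D}^{k^2}$. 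The remaining pieces are error terms: the $-1$ from orthogonality contributes $\ll q^{\mathfrak{D}/2}(\sum_{\deg N\le y}|N|^{-1/2})^{2k-1}\ll q^{\mathfrak{D}/2+(2k-1)y/2}$; and the terms with $RG\equiv H\!\!\pmod Q$ but $RG\ne H$ force $\deg(RG)\ge\deg Q$, hence $\deg R\ge\deg Q-(k-1)y$ and $|R|^{-1/2}\le q^{-(\mathfrak{D}+1-(k-1)y)/2}$, contributing $\ll\phi(Q)q^{-\mathfrak{D}/2+(3k-2)y/2}$. With $y=\lfloor\mathfrak{D}/(3k)\rfloor$ both are $\ll q^{\mathfrak{D}(1-\delta_k)}$ for some $\delta_k>0$, hence $o(\phi(Q)\mathfrak{D}^{k^2})$; so $S_1\gg_k\phi(Q)\mathfrak{D}^{k^2}$ and the theorem follows.

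I expect the real obstacle to be the choice of mollifier. The naive analogue of the classical construction takes the coefficients of $A$ to be $d_k(N)$, but then the diagonal of $S_2$ is of order $\mathfrak{D}^{k^4}$ while that of $S_1$ is only of order $\mathfrak{D}^{k^4-k^3+k^2}$, and H\"older returns the useless exponent $2k(k^4-k^3+k^2)-(2k-1)k^4=k^3(2-k)$. It is precisely the ``truncated-$\zeta$'' mollifier, with coefficients identically $1$, that balances both diagonals at $\mathfrak{D}^{k^2}$ --- each governed by the same sum $\sum_{\deg H\le y}d_k(H)^2/|H|$ of Lemma~\ref{lem:lower bound dk^2 sum} --- so that $|S_1|^{2k}/S_2^{2k-1}$ retains the full $k^2$ power. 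The only other care needed is the length: one requires $(3k-2)y<\mathfrak{D}$ (from the off-diagonal terms $RG\equiv H$, $RG\ne H$) and $ky\le\mathfrak{D}$, so $y$ must be taken as a fixed fraction of $\mathfrak{D}$, which is harmless since any $y\asymp\mathfrak{D}$ yields the exponent $k^2$.
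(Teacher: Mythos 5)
Your proposal is correct and follows essentially the same route as the paper: you construct the same mollifier $A(\chi)=\sum_{\deg N\le y}\chi(N)/|N|^{1/2}$ (a truncation of the L-series itself), form the same $S_1$ and $S_2$, apply H\"older in the same way, and evaluate both diagonals via the growth of $\sum_{\deg N\le y}d_k(N)^2/|N|$ from Lemma~\ref{lem:lower bound dk^2 sum}. The only cosmetic differences are the truncation length ($\mathfrak{D}/(3k)$ versus the paper's $\mathfrak{D}/(2k)$) and that you explicitly bound the off-diagonal contribution to $S_1$, whereas the paper simply drops those nonnegative terms to obtain the lower bound.
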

\begin{proof}
Let $x:=\frac{\mathfrak{D}}{2k}$, $A\left(\chi\right):=\sum_{\deg N\leq x}\frac{\chi\left(N\right)}{\left|N\right|}$,
and set 
\[
S_{1}:=\underset{\chi\neq\chi_{0}}{\sum_{\chi\pmod Q}}L\left(\frac{1}{2},\chi\right)A\left(\chi\right)^{k-1}\overline{A\left(\chi\right)}^{k}\:,\qquad S_{2}:=\underset{\chi\neq\chi_{0}}{\sum_{\chi\pmod Q}}\left|A\left(\chi\right)\right|^{2k}.
\]
By the Triangle and Hölder's inequalities we have
\begin{eqnarray*}
\left|\underset{\chi\neq\chi_{0}}{\sum_{\chi\pmod Q}}L\left(\frac{1}{2},\chi\right)A\left(\chi\right)^{k-1}\overline{A\left(\chi\right)}^{k}\right| & \leq & \underset{\chi\neq\chi_{0}}{\sum_{\chi\pmod Q}}\left|L\left(\frac{1}{2},\chi\right)\right|\left|A\left(\chi\right)\right|{}^{2k-1}\\
 & \leq & \left(\underset{\chi\neq\chi_{0}}{\sum_{\chi\pmod Q}}\left|L\left(\frac{1}{2},\chi\right)\right|^{2k}\right)^{\frac{1}{2k}}\\
 &  & \qquad\cdot\left(\underset{\chi\neq\chi_{0}}{\sum_{\chi\pmod Q}}\left|A\left(\chi\right)\right|^{2k}\right)^{\frac{2k-1}{2k}}
\end{eqnarray*}
therefore
\[
\underset{\chi\neq\chi_{0}}{\sum_{\chi\pmod Q}}\left|L\left(\frac{1}{2},\chi\right)\right|^{2k}\geq\frac{\left|S_{1}\right|^{2k}}{S_{2}^{2k-1}}.
\]
Hence, if we show $S_{2}\ll\left|Q\right|\left(\deg Q\right)^{k^{2}}\ll S_{1}$,
the Theorem will follow. 

First we evaluate $S_{2}$. Since $A\left(\chi_{0}\right)\ll q^{\frac{x}{2}}$
we have
\begin{eqnarray*}
S_{2} & = & \sum_{\chi\pmod Q}\left|A\left(\chi\right)\right|^{2k}+O\left(q^{\nicefrac{x}{2}}\right)\\
 & = & \sum_{\deg M,\deg N\leq kx}\frac{d_{k}\left(M,x\right)d_{k}\left(N,x\right)}{\left|MN\right|^{\nicefrac{1}{2}}}\sum_{\chi\pmod Q}\chi\left(M\right)\overline{\chi}\left(N\right)+O\left(q^{\nicefrac{x}{2}}\right).
\end{eqnarray*}
Since $kx=\frac{\mathfrak{D}}{2}<\mathfrak{D}$ the orthogonality
relation for characters mod $Q$ gives that only the diagonal term
$M=N$ survives. Thus, 
\[
S_{2}=\phi(Q)\sum_{\deg N\leq kx}\frac{d_{k}(N,x)^{2}}{\left|N\right|}+O(q^{\nicefrac{x}{2}})\leq\phi(Q)\sum_{\deg N\leq kx}\frac{d_{k}(N)^{2}}{\left|N\right|}+O\left(q^{\nicefrac{x}{2}}\right),
\]
since $d_{k}(N,x)\leq d_{k}(N)$. Due to Lemma \ref{lem:lower bound dk^2 sum},
we get $S_{2}\ll\left|Q\right|\left(\deg Q\right)^{k^{2}}$, as claimed.

Now we evaluate $S_{1}$. We have
\begin{eqnarray*}
S_{1} & = & \underset{\chi\neq\chi_{0}}{\sum_{\chi\pmod Q}}\sum_{\deg N\leq\mathfrak{D}}\frac{\chi\left(N\right)}{\left|N\right|^{\nicefrac{1}{2}}}A\left(\chi\right)^{k-1}\overline{A\left(\chi\right)}^{k}\\
 & = & \sum_{\chi\pmod Q}\sum_{\deg N\leq\mathfrak{D}}\frac{\chi\left(N\right)}{\left|N\right|^{\nicefrac{1}{2}}}A\left(\chi\right)^{k-1}\overline{A\left(\chi\right)}^{k}+O\left(q^{\nicefrac{3\mathfrak{D}}{2}}\right)\\
 & = & \phi\left(Q\right)\sum_{\deg A\leq(k-1)x}\sum_{\deg B\leq kx}\underset{AN\equiv B\pmod Q}{\sum_{\deg N\leq\mathfrak{D}}}\frac{d_{k-1}\left(A,x\right)d_{k}\left(B,x\right)}{\left|ABN\right|^{\nicefrac{1}{2}}}+O\left(\frac{q^{\mathfrak{D}}}{\mathfrak{D}}\right)
\end{eqnarray*}
when the first equality holds according to 
\[
\sum_{\deg N\leq\mathfrak{D}}\frac{1}{\left|N\right|^{\nicefrac{1}{2}}}A(\chi_{0})^{2k-1}\ll q^{\nicefrac{\mathfrak{D}}{2}}q^{(2k-1)\nicefrac{x}{2}}\ll\frac{q^{\mathfrak{D}}}{\mathfrak{D}}
\]
 and the second equality holds by using the orthogonality relation
for characters. Since $\frac{d_{k-1}\left(A,x\right)d_{k}\left(B,x\right)}{\left|ABN\right|^{\nicefrac{1}{2}}}\geq0$
we can write 
\[
S_{1}\geq\phi\left(Q\right)\sum_{\deg B\leq kx}\underset{AN=B}{\sum_{\deg A\leq(k-1)x\:\deg N\leq\mathfrak{D}}}\frac{d_{k-1}\left(A,x\right)d_{k}\left(B,x\right)}{\left|ABN\right|^{\nicefrac{1}{2}}}+O\left(\frac{q^{\mathfrak{D}}}{\mathfrak{D}}\right).
\]

Since 
\[
\underset{AN=B}{\sum_{\deg A\leq(k-1)x,\:\deg N\leq\mathfrak{D}}}d_{k-1}\left(A,x\right)\geq\underset{AN=B}{\sum_{\deg A\leq(k-1)x,\:\deg N\leq x}}d_{k-1}\left(A,x\right)=d_{k}\left(B,x\right)
\]
and $d_{k}(B,x)=d_{k}(B)$ for $\deg B\leq x$, we deduce that
\[
S_{1}\geq\phi\left(Q\right)\sum_{\deg B\leq x}\frac{d_{k}\left(B\right)^{2}}{\left|B\right|}+O\left(\frac{q^{\mathfrak{D}}}{\mathfrak{D}}\right)\gg\left|Q\right|\left(\deg Q\right)^{k^{2}}.
\]
This proves the Theorem. \end{proof}

\end{document}